\documentclass[a4paper,twoside,11pt, reqno]{amsart} 
\newcommand{\Ueberschrift} {Elementary anabelian varieties are anabelian}
\newcommand{\Kurztitel}{Elementary anabelian varieties are anabelian}
\usepackage[normal]{phaine_style}
\usepackage[margin=1.5in]{geometry}
\usepackage{mathrsfs}
\usepackage{tikz-cd}
\newtheorem{thmABC}{Theorem}
\newtheorem{defABC}[thmABC]{Definition}

\usepackage{amssymb}
\usepackage[headings]{fullpage}
\usepackage{comment}
\usepackage[all]{xy} \CompileMatrices

\usepackage{extarrows}

\DeclareMathOperator{\rk}{rk}

\DeclareMathOperator{\spez}{sp}

\DeclareMathOperator{\out}{out}
\DeclareMathOperator{\HT}{HT}

\newcommand{\cB}{{\mathscr B}}

\DeclareMathOperator{\im}{im}

\DeclareMathOperator{\open}{open}
\DeclareMathOperator{\dom}{dom}
\newcommand{\cohdom}{\mathrm{ci}}
\newcommand{\ecohdom}{\mathrm{eci}}
\newcommand{\secohdom}{\mathrm{seci}}

\newcommand{\scohdom}{\mathrm{sci}}

\newcommand{\HTinj}{\mathrm{HT-inj}}

\newcommand{\RTop}{\categ{RTop}}

\DeclareMathOperator{\PND}{PND}

\begin{document}

\title[\Kurztitel]{\Ueberschrift} 

\author{Magnus Carlson}
\address{Magnus Carlson, Institut f\"ur Mathematik, Goethe--Universit\"at Frankfurt, Robert-Mayer-Stra\ss e~6--8,
60325~Frankfurt am Main, Germany}
\email{\tt carlson@math.uni-frankfurt.de}

	
\date{\normalsize \today}

\begin{abstract} 
We show that isomorphisms of fundamental groups of elementary anabelian varieties -- varieties obtained as iterated fibrations of hyperbolic curves --  over sub-$p$-adic fields correspond bijectively to isomorphisms of varieties. Moreover, dominant maps between proper elementary anabelian varieties are in bijection with ``stably cohomologically injective'' maps of fundamental groups: open maps whose pullbacks to all open subgroups induce injections on cohomology rings with $\ell$-adic coefficients, for any prime $\ell$. This verifies conjectures of Grothendieck from his letter to Faltings. Finally, we establish étale homotopical generalizations of these results.
\end{abstract}

\maketitle
\section{Introduction}
\label{sec:intro}
In his anabelian program \cite{GrothendieckBrief,GrothendieckEsquisse}, Grothendieck conjectured the existence of \emph{anabelian varieties}: varieties completely determined by their étale fundamental group. 
Grothendieck expected hyperbolic curves over a field $K$ finitely generated over $\QQ$ to be anabelian, and formulated the \emph{Hom-conjecture}: given hyperbolic curves $X$ and $Y$ over such a field $K$, the natural map
\[ \Hom^{\dom}_K(X,Y) \rightarrow \Hom_{G_K}^{\open}(\pi_1(X),\pi_1(Y)) \] is a bijection. Here, the left-hand side consists of dominant maps over $K$, while the right-hand side is the set of $\pi_1(Y_{\overline{K}})$-conjugacy classes of open continuous homomorphisms over $G_K$. As a special case, he gave the \emph{Isom-conjecture}: the map \[  \Isom_K(X,Y) \rightarrow \Isom_{G_K}(\pi_1(X),\pi_1(Y)), \]  from the set of $K$-isomorphisms between two hyperbolic curves to the set of $\pi_1(Y_{\overline{K}})$-conjugacy classes of isomorphisms of profinite groups over $G_K$, is also bijective. 

Both of these conjectures are now theorems due to work of Mochizuki \cite{MochizukiProfiniteGrothendieck,MochizukiHom}, following earlier work of Nakamura \cite{NakamuraP1} and Tamagawa \cite{TamagawaGrothendieck},  and are known to hold over any sub-$p$-adic field $K$, i.e., a subfield of a field finitely generated over $\QQ_p$ for some prime number $p$.

It was unclear to Grothendieck which higher-dimensional varieties should be anabelian. However, he expected that \emph{elementary anabelian varieties}, i.e., varieties constructed by successive smooth fibrations from hyperbolic curves, are anabelian. He conjectured analogues of both the Hom-conjecture and the Isom-conjecture in this setting. The main theorem of this paper proves the Isom-conjecture for elementary anabelian varieties. Following terminology suggested by Hoshi \cite{HoshiGrothendieck}, we will primarily refer to such varieties as \emph{hyperbolic polycurves}. 
\begin{thmABC}[see Theorem \ref{cor:isom}] \label{thmintro:isom}
Let $X$ and $Y$ be elementary anabelian varieties (hyperbolic polycurves) over a sub-$p$-adic field $K$. Then the natural map $\Isom_K(X,Y) \rightarrow \Isom_{G_K}(\pi_1(X),\pi_1(Y))$ is bijective.
\end{thmABC}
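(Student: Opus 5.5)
We prove the Isom-conjecture for hyperbolic polycurves by induction on the dimension $n$. The case $n=1$ is Mochizuki's theorem for hyperbolic curves over sub-$p$-adic fields.

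\textbf{Injectivity.} Write $X \to X_{n-1}$ for the last fibration in a sequence $X=X_n\to X_{n-1}\to\cdots\to X_1\to \Spec K$. Each morphism is smooth with geometrically connected hyperbolic curve fibers, so we get exact sequences
\[ 1\to \pi_1(F_{\bar x})\to \pi_1(X_i)\to \pi_1(X_{i-1})\to 1 .\]
Injectivity then reduces to a faithfulness statement. If an automorphism $f$ of $X$ acts on $\pi_1(X)$ by an inner automorphism by an element of $\pi_1(X_{\bar K})$, then $f$ is the identity. We would prove this by pulling back to rational points over finite extensions and using that $\pi_1(X_{\bar K})$ has trivial center. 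Alternatively, we can restrict $f$ to the curves obtained as fibers, and to sections through sub-$p$-adic points, where Mochizuki's injectivity applies. A dense set of such curves covers $X$, so $f$ is determined.

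\textbf{Surjectivity.} We use Hoshi's group-theoretic characterization of the fibration structure. For hyperbolic polycurves of dimension $\le 4$, or under suitable hypotheses in general, the kernel $\pi_1(F)$ is characterized group-theoretically, for instance as a maximal topologically finitely generated normal closed subgroup of the right cohomological dimension. Given $\phi:\pi_1(X)\xrightarrow{\sim}\pi_1(Y)$ over $G_K$, we would first show that $\phi$ carries the kernel of $\pi_1(X)\to\pi_1(X_{n-1})$ onto the corresponding kernel for $Y$. It therefore induces an isomorphism $\pi_1(X_{n-1})\cong\pi_1(Y_{n-1})$, which by induction comes from an isomorphism $g:X_{n-1}\to Y_{n-1}$.

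After base change along $g$, we view $X$ and $Y$ as hyperbolic curves over the function field $L=K(X_{n-1})$. This field is sub-$p$-adic, being finitely generated over $K$. Restricting $\phi$ to the generic fibers gives an isomorphism $\pi_1(X_\eta)\cong\pi_1(Y_\eta)$ over $G_L$. Mochizuki's theorem over $L$ then yields an isomorphism $X_\eta\cong Y_\eta$. Since the target is a hyperbolic curve fibration, this spreads out to an isomorphism $X\cong Y$ over the whole base, using purity and uniqueness of smooth models. Finally, we check compatibility with $\phi$ up to conjugacy by the injectivity argument.

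\textbf{Main obstacle.} The key step is showing that $\phi$ preserves the fibration kernels. In general, $\pi_1(X)$ admits many fibration structures, and a polycurve can be fibred in different ways. Hoshi's group-theoretic recovery is known only in low dimension. Instead of using the group structure alone, I would first try to exploit the $G_K$-action and weights. For example, one can use the Hodge–Tate or $p$-adic weights of the abelianized kernels, or a cohomological characterization of fiber subgroups via cup products in $H^*(\pi_1,\mathbb{Z}_\ell)$, to single out the fiber subgroups. Where this fails, one would pass to an open subgroup where the fibration becomes canonical, and descend.
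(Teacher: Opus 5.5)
\textbf{There is a genuine gap, at exactly the step you flag.} Your surjectivity argument needs $\phi$ to carry the kernel of $\pi_1(X)\to\pi_1(X_{n-1})$ onto the kernel of $\pi_1(Y)\to\pi_1(Y_{n-1})$, for suitable parameterizing sequences. Nothing in the proposal establishes this.

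\begin{itemize}
\item The description of fiber subgroups you invoke (maximal topologically finitely generated normal closed subgroups of the right cohomological dimension) is asserted, not proved. No such group-theoretic recovery of the fibration structure is known in general; this is exactly where Hoshi's approach is limited to dimension at most $4$.
\item A polycurve may carry several unrelated parameterizing sequences, so there is no intrinsic fiber subgroup for $\phi$ to respect. You give no reason why passing to an open subgroup would make one canonical.
\item The remarks about weights of abelianized kernels or cup products are heuristics, not arguments.
\end{itemize}

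Without this step the induction never starts, so the proposal covers nothing beyond the already known low-dimensional cases. The later steps would be fine once fiber preservation is known: restricting to generic fibers over $K(X_{n-1})$, applying Mochizuki there, and extending. The injectivity half is also standard (Hoshi's Proposition 3.2).

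\textbf{The missing idea is to avoid reconstructing fibrations altogether.} Hoshi proved, in every dimension, that dominant maps $X\to Y$ into a hyperbolic polycurve correspond bijectively to \emph{poly-nondegenerate} maps $\pi_1(X)\to\pi_1(Y)$. Nondegenerate means that no composite $\pi_1(U)\to\pi_1(Y_m)$, with $U\subset X$ open, factors through $\pi_1(Z)$ for a smooth geometrically connected $U\to Z$ with $\dim Z<m$.

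So it suffices to show that $\phi$ and $\phi^{-1}$ are poly-nondegenerate. This gives $f$ and $g$ with $f_*=\phi$ and $g_*=\phi^{-1}$. Injectivity of $\Hom_K(X,X)\to\Hom_{G_K}(\pi_1(X),\pi_1(X))$, and likewise for $Y$, then makes $f$ and $g$ mutually inverse.

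The paper proves poly-nondegeneracy through an obstruction in top Hodge--Tate weight:
\begin{enumerate}
\item Nondegeneracy may be checked after a finite étale cover of $Y$. After such a cover there is a parameterizing sequence with all fibers of genus $\ge 2$. Then each $Y_m$ carries a nonzero log-canonical $m$-form, proved by induction using Fujita's semipositivity of $f_*\omega$ and Riemann--Roch on the base curve.
\item Over a $p$-adic local field, these forms are the weight-$m$ part of $H^m(Y_{m,\overline{K}},\QQ_p)\otimes\CC_p$.
\item A factorization through $\pi_1(Z)$ with $\dim Z<m$ would send this part to zero, since $Z$ has no nonzero log $m$-forms. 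But the isomorphism $\phi$, the dominant projection $Y\to Y_m$, and restriction to the open $U$ are all injective on log forms, a contradiction.
\item The case of a general sub-$p$-adic field follows by spreading out and specializing to a $p$-adic local residue field.
\end{enumerate}

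So the Hodge--Tate input you gesture at is indeed the key. It is used to rule out degenerate factorizations, not to identify fiber subgroups.
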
 
We further characterize dominant maps from a smooth proper, geometrically connected $K(\pi,1)$-variety (see Definition \ref{def:kpi1}) into a proper hyperbolic polycurve group-theoretically. This proves a version of the dominant Hom-conjecture\footnote{Grothendieck conjectured that \emph{dominant} maps between hyperbolic polycurves should correspond bijectively to open maps. However, this cannot be true unless $Y$ is a curve; see Example \ref{ex:grothendieckcounter}} for proper hyperbolic polycurves, as conjectured by Grothendieck in his letter to Faltings\footnote{Strictly speaking, Grothendieck conjectured a group-theoretical characterization of dominant maps from an arbitrary smooth variety into a hyperbolic polycurve. In Example \ref{ex:grothendieckcounter}, we prove that no such characterization exists if the hyperbolic polycurve has dimension greater than two. See also the discussion after Theorem \ref{thmintro:dom}}. Our group-theoretical characterization of dominant maps is in terms of \emph{stably cohomologically injective maps} of étale fundamental groups.
\begin{defABC}[see also Definition \ref{def:cohdom}]
Let $K$ be a field of characteristic zero, and let $X$ and $Y$ be geometrically connected varieties over $K$.  A continuous map $\varphi \colon \pi_1(X) \rightarrow \pi_1(Y)$ over $G_K$ is cohomologically injective if it is open and the induced map \[ \varphi^* \colon H^*(\pi_1(Y_{\overline{K}}),\QQ_{\ell}) \rightarrow H^*(\pi_1(X_{\overline{K}}),\QQ_{\ell}) \] is injective for every prime number $\ell$. We say that $\varphi$ is stably cohomologically injective if for every open subgroup $N \subset \pi_1(Y)$, the induced map $\varphi^{-1}(N) \rightarrow N$ is cohomologically injective. 
\end{defABC}
We note that if $\varphi \colon \pi_1(X) \rightarrow \pi_1(Y)$ is a stably cohomologically injective map between étale fundamental groups of proper hyperbolic polycurves, any representative of its $\pi_1(Y_{\overline{K}})$-conjugacy class is stably cohomologically injective. Thus, we say that a $\pi_1(Y_{\overline{K}})$-conjugacy class representing an element of $\Hom_{G_K}(\pi_1(X),\pi_1(Y))$ is stably cohomologically injective if any of its representatives is stably cohomologically injective.
\begin{thmABC}[see Theorem \ref{thm:dominant}] \label{thmintro:dom}
Let $K$ be a sub-$p$-adic field, let $X$ be a smooth, proper, geometrically connected variety which is a $K(\pi,1)$, and let $Y$ be a proper elementary anabelian variety (a proper hyperbolic polycurve) over $K$. Then the natural map
  \[
  \Hom^{\dom}_K(X,Y)
  \longrightarrow
  \ \Hom^{\scohdom}_{G_K}(\pi_1(X),\pi_1(Y))
  \]
  is bijective, where the right-hand side consists of $\pi_1(Y_{\overline{K}})$-conjugacy classes of stably cohomologically injective homomorphisms. If $Y$ has dimension at most two, the same conclusion holds without assuming that $X$ is a $K(\pi,1)$-variety.
\end{thmABC}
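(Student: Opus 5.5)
The plan is to reduce to the curve case, where Mochizuki's Hom-theorem applies, by induction on $\dim Y$, using the fibration structure $Y \to Y'$ with fibre a proper hyperbolic curve.

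\emph{Well-definedness.} A dominant map $f\colon X\to Y$ between proper varieties is surjective. Its pullback to any finite étale cover $Y_N\to Y$ is again a surjective map from a smooth proper variety. Hence $f^*\colon H^*(Y_{N,\overline K},\QQ_\ell)\to H^*(X_{N,\overline K},\QQ_\ell)$ is injective: pick $\omega$ the class of a multisection of degree $d$ of dimension $\dim Y$ (or use the trace/cycle class argument via an ample class), so that $f_*(\omega\cdot f^*\alpha)=d\alpha$. Since $Y$ is a $K(\pi,1)$ and $X$ is one by hypothesis, étale cohomology agrees with group cohomology. Openness follows from surjectivity on geometric $\pi_1$ up to finite index. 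This shows the map lands in stably cohomologically injective classes.

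\emph{Injectivity.} This follows from the Isom/Hom results on the target: $Y$ embeds, via its fibration tower, into a product of curves after finite étale base change, or one argues directly. Two maps $f,g$ inducing conjugate homomorphisms have equal compositions with $Y\to Y'$ by induction. On the fibres, which are hyperbolic curves over the function field of $X$, they agree by Mochizuki's injectivity applied over the sub-$p$-adic field $K(X)$.

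\emph{Surjectivity.} This is the heart of the argument. Given $\varphi$ stably cohomologically injective, consider the composite $\pi_1(X)\to\pi_1(Y)\to\pi_1(Y')$. I would first show it is again stably cohomologically injective. Since $Y\to Y'$ is a proper smooth fibration with $K(\pi,1)$ fibres, Leray/Hochschild–Serre degenerates with $\QQ_\ell$ coefficients (Deligne), so $H^*(Y')\hookrightarrow H^*(Y)$. By induction this yields a dominant $f'\colon X\to Y'$. Then pull back $Y\times_{Y'}X\to X$ and reduce to finding a section. The map $\varphi$ gives a section of $\pi_1(Y\times_{Y'}X)\to\pi_1(X)$, equivalently an open homomorphism from $\pi_1$ of $X_\eta$ (generic point) into $\pi_1$ of the fibre curve $C$ over $K(X)$. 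The cohomological injectivity in top degree of the fibre is what forces this to be open onto $\pi_1(C_{\overline{K(X)}})$ rather than trivial or of infinite index. Here I would use the Künneth-type decomposition: injectivity on $H^{\dim Y}$ forces nonvanishing of the pullback of the fibre fundamental class, hence an open image in the curve's geometric group. Mochizuki's Hom-theorem over $K(X)$ (sub-$p$-adic) then yields a dominant rational map $X\dashrightarrow C$ over $Y'$. This extends to a morphism because $C$ is proper hyperbolic and $X$ is smooth (no rational curves). The resulting map $X\to Y$ is dominant because the fibre maps and the base map are.

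The main obstacle is this openness step: translating cohomological injectivity (stable under all open $N$) into openness of the induced fibrewise homomorphism, together with the stability of the hypothesis under passage to $Y'$. For $\dim Y\le 2$, $Y'$ is a curve and $C$ a curve, so one can replace the $K(\pi,1)$ hypothesis on $X$ by working only with $H^1$ and $H^2$, which always agree with group cohomology in degrees $\le 1$ and inject in degree $2$. This explains the final clause of Theorem \ref{thmintro:dom}.
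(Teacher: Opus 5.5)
The gap is the step you flag yourself as the main obstacle, and it carries the whole theorem. You must show that the restriction of $\varphi$ to the fibres of $f'$ has open image in $\Delta=\ker(\pi_1(Y_{\overline K})\to\pi_1(Y'_{\overline K}))$, i.e.\ that $\varphi$ is nondegenerate. Once that is known at every stage of the tower, the rest of your induction (Hom theorem over a function field, extending the rational section, injectivity) is essentially Hoshi's Theorem~\ref{thm:hoshi}, which the paper quotes. The paper's new input is exactly this nondegeneracy (Proposition~\ref{prop:hodgepoly}), and your sketch does not supply it.

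A Künneth/Leray decomposition of $H^{\dim Y}(Y)$ tells you nothing about restrictions to fibres of $f'$. With $q\colon Y\to Y'$, the pullback of the fibre class $q^*[\mathrm{pt}_{Y'}]$ is $f'^*[\mathrm{pt}_{Y'}]$, which is nonzero for every dominant $f'$ whatever $\varphi$ does on fibres. Likewise, $\varphi^*\alpha\neq0$ for a relatively ample $\alpha\in H^2(Y)$ does not make $\varphi^*\alpha$ nonzero on fibres of $f'$. Moreover, a degeneracy only gives a factorization over a Zariski open $U\subset X$, and $H^n(X_{\overline K})\to H^n(U_{\overline K})$ is not injective. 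This is why the paper uses the Hodge--Tate weight $n$ part (global log $n$-forms): it injects into $U$ and vanishes for $\dim Z<n$. It is also why the paper needs a nonzero log-canonical form on a finite étale cover of $Y$ (Corollary~\ref{prop:polyhyperbolic}, via Fujita semipositivity and Riemann--Roch), which is where stability is used, and a specialization to a $p$-adic local field.

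Two further problems. First, passing from nontrivial to open image needs the elasticity of $\Delta$ (Lemma~\ref{lem:pi1ofpoly}), which you never invoke. Second, as written you apply the Hom theorem over $K(X)$, where the source has trivial geometric fundamental group, so you face a section-conjecture problem. It must instead be applied over the function field $L$ of the Stein factorization of $f'$, to the generic fibre $X_L\to Y\times_{Y'}\Spec L$.

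Your route can be completed when $X$ is a $K(\pi,1)$ by working in degree $2n$ instead of $n$. Let $\alpha=c_1(\omega_{Y/Y'})$; then $q^*[\mathrm{pt}_{Y'}]\cup\alpha$ is a nonzero multiple of $[\mathrm{pt}_Y]$. Its pullback is $f'^*[\mathrm{pt}_{Y'}]\cup\varphi^*\alpha=\sum_j\iota_{j*}\iota_j^*\varphi^*\alpha$, where $\iota_j\colon F_j\hookrightarrow X_{\overline K}$ are the components of a general fibre of $f'$. This vanishes if each $\pi_1(F_j)\to\pi_1(Y_{\overline K})$ is trivial. Since $X$ is a $K(\pi,1)$, this étale computation contradicts injectivity on $H^{2n}(\pi_1(Y_{\overline K}),\QQ_\ell)$, so the fibre image is nontrivial, hence open by elasticity. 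This would bypass $p$-adic Hodge theory and the canonical-form construction.

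Because that argument lives in degree $2n$, your explanation of the $\dim Y\le2$ clause does not fit your approach. For $\dim Y=2$ it needs $H^4$, where $H^4(\pi_1(X_{\overline K}),\QQ_\ell)\to H^4(X_{\overline K},\QQ_\ell)$ need not be injective. The paper can restrict to degrees $\le2$ there only because its Hodge--Tate argument uses degree $n$.
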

As any proper hyperbolic polycurve is a $K(\pi,1)$, this characterizes dominant maps between proper hyperbolic polycurves group-theoretically. Grothendieck made the conjecture that for any smooth variety $X$ and any hyperbolic polycurve $Y$ over a field finitely generated over $\QQ$, the natural map $\Hom^{\dom}_K(X,Y) \rightarrow \Hom_{G_K}^{\open}(\pi_1(X),\pi_1(Y))$ is bijective. When $Y$ is a curve, this follows from the Hom-theorem \cite[Theorem A]{MochizukiHom} of Mochizuki.  

In the case when $\dim Y \geq 2$, any dominant map $f \colon X \rightarrow Y$ over $K$ induces an open map $f_* \colon \pi_1(X) \rightarrow \pi_1(Y)$ over $G_K$, but not conversely. More precisely, in Example \ref{ex:grothendieckcounter}, we show that for any hyperbolic polycurve $Y$ with $\dim Y \geq 2$, there exists a smooth projective variety $X$ and open maps $\pi_1(X) \rightarrow \pi_1(Y)$ that do not come from any dominant map $X \rightarrow Y$. The question arises whether there exists a group-theoretical refinement of the class of open homomorphisms that cuts out exactly the image of $\Hom^{\dom}_K(X,Y)$ inside $\Hom^{\open}_{G_K}(\pi_1(X),\pi_1(Y))$ for every smooth proper $X$ and $Y$ a proper hyperbolic polycurve of arbitrary dimension. Theorem \ref{thmintro:dom} shows that stably cohomologically injective maps provide such a refinement when the dimension of $Y$ is at most two. In Example \ref{ex:grothendieckcounter}, we show that no such refinement can exist unless $Y$ has dimension at most two. Indeed, we show that if $\dim(Y) \geq 3$, then there is a smooth proper variety $X$ together with two morphisms \[ f,g \colon X \rightarrow Y \]  such that the induced maps on fundamental groups are isomorphic over $G_K$, while one of $f$ and $g$ is dominant and the other one is not. We are led to believe that the class of smooth proper $K(\pi,1)$-varieties is close to being the largest class for which there is a group-theoretical characterization of dominant maps.

On the other hand, we obtain a characterization of dominant morphisms from an arbitrary smooth proper variety into a proper hyperbolic polycurve in terms of étale homotopy theory. This provides an affirmative answer to Grothendieck's stronger dominant Hom-conjecture for proper hyperbolic polycurves, if one works with étale homotopy types. 
\begin{thmABC}[see Theorem \ref{thm:dominant}] \label{thmintro:dometale}
Let $K$ be a sub-$p$-adic field, and let $X$ be a smooth, proper, geometrically connected variety over $K$ and $Y$ a proper hyperbolic polycurve over $K$. Denote by $\Pi(X),\Pi(Y)$ their étale homotopy types over $BG_K$, the classifying space of $G_K$. Then the natural map
  \[
  \Hom^{\dom}_K(X,Y)
  \longrightarrow
  \ \pi_0(\Map^{\scohdom}_{BG_K}(\Pi(X),\Pi(Y))),
  \]
from the set of dominant maps to the homotopy classes of stably cohomologically injective maps (see the discussion after Definition \ref{def:cohdom}) $\Pi(X) \rightarrow \Pi(Y)$ over $BG_K$, is bijective.

\end{thmABC}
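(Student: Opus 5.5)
We plan to reduce Theorem~\ref{thmintro:dometale} to the fundamental-group statement of Theorem~\ref{thmintro:dom} (and its proof ingredients), using that $Y$ is a $K(\pi,1)$. Since a proper hyperbolic polycurve $Y$ is a $K(\pi,1)$, $\Pi(Y)\simeq B\pi_1(Y)$ over $BG_K$. Hence
\[\pi_0\bigl(\Map_{BG_K}(\Pi(X),\Pi(Y))\bigr)\cong \pi_0\bigl(\Map_{BG_K}(B\pi_1(X),B\pi_1(Y))\bigr)\cong\Hom_{G_K}(\pi_1(X),\pi_1(Y)),\]
the latter taken up to $\pi_1(Y_{\overline K})$-conjugacy. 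The subtlety is that the stably cohomologically injective condition on the homotopy side uses the cohomology of $\Pi(X_{\overline K})$ and of its finite étale covers, i.e.\ $H^*_{\mathrm{et}}(X'_{\overline K},\QQ_\ell)$, rather than group cohomology of $\pi_1(X_{\overline K})$. So the homotopy side is the subset of classes $\varphi$ such that for every open $N\subset\pi_1(Y)$, with $X_N\to X$ the corresponding cover, $H^*(N_{\overline K},\QQ_\ell)\to H^*_{\mathrm{et}}(X_{N,\overline K},\QQ_\ell)$ is injective and $\varphi$ is open.

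\emph{Injectivity.} If $f,g$ are dominant maps inducing the same class, then they induce the same outer map on $\pi_1$. The Hom-rigidity used in Theorem~\ref{thmintro:dom} then gives $f=g$. That rigidity is obtained by inducting along the polycurve fibration $Y\to Y_{n-1}\to\dots$ and applying Mochizuki's Hom theorem to the fibres over $\pi_1(X)$-sections, so injectivity does not require $X$ to be a $K(\pi,1)$.

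\emph{Dominant maps land in the target.} For $f$ dominant, the pullback $X_N=X\times_Y Y_N$ maps dominantly, hence generically finitely onto its image of full dimension, to the proper smooth $Y_N$. Properness and dominance give that $f_N^*$ is injective on $\ell$-adic cohomology: compose with cup product against a suitable class and apply the trace/projection formula, using Poincaré duality on $Y_N$ and a multisection of degree $d>0$. Since $Y_N$ is $K(\pi,1)$, the source cohomology is $H^*(N_{\overline K},\QQ_\ell)$.

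\emph{Surjectivity, the main step.} Given a stably cohomologically injective class $\varphi$ in the homotopy sense, we proceed by induction on $\dim Y$ via $Y\to Y'$ with curve fibres. Project to $\pi_1(Y')$. The composite is still stably cohomologically injective, because $H^*(\pi_1(Y'_{\overline K}))\hookrightarrow H^*(\pi_1(Y_{\overline K}))$ by Leray for a proper smooth fibration with a $K(\pi,1)$ fibre. By induction it comes from a dominant $g\colon X\to Y'$. Over the generic point of $Y'$, or rather over $X$, the lift of $\varphi$ gives a section-type datum to the relative curve $X\times_{Y'}Y\to X$. This step reuses the argument of Theorem~\ref{thmintro:dom}, namely a relative Hom theorem via Mochizuki over the function field of $X$, which is sub-$p$-adic. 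It produces a morphism $f\colon X\to Y$ over $Y'$ realising $\varphi$. The expected obstacle is showing that $f$ is dominant rather than merely open on $\pi_1$, since Example~\ref{ex:grothendieckcounter} shows openness alone fails. For this we use the top-degree class. If $f$ were not dominant, then over the cover $X_N$ the image $f(X_N)$ would have dimension less than $\dim Y$. The pullback of the fundamental class of $H^{2\dim Y}(Y_{N,\overline K},\QQ_\ell)$ would then factor through the cohomology of a resolution of the image, which vanishes in degree $2\dim Y$, so it would be zero. This contradicts injectivity, which is here available on étale cohomology of $X_N$ itself, exactly as the homotopy-type hypothesis provides without the $K(\pi,1)$ assumption. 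Thus $f$ is dominant and the map is bijective.
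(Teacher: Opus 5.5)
Your reduction to the $\pi_1$-level is fine: $\Pi(Y)\simeq B\pi_1(Y)$, so the homotopy side is $\Hom^{\secohdom}_{G_K}(\pi_1(X),\pi_1(Y))$. So are the injectivity step (Hoshi's rigidity) and the check that dominant maps land in the target. Composing with $\pi_1(Y)\to\pi_1(Y')$ also preserves stable étale cohomological injectivity.

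\textbf{The gap.} The step you pass over quickly is the one that fails: producing $f\colon X\to Y$ over $Y'$ realising $\varphi$. Over $K(X)$, your datum is a section of $\pi_1(Y\times_{Y'}\Spec K(X))\to G_{K(X)}$. Turning a Galois section of a hyperbolic curve into a rational point is the section conjecture, which is not available. Mochizuki's Hom theorem can only be used over $K(Y')$, for the map from $\pi_1$ of the generic fibre of $g$ to $\pi_1$ of the generic fibre curve of $Y\to Y'$. It needs that map to be \emph{open}, i.e.\ $\varphi$ must send $\pi_1$ of the geometric generic fibre of $g$ onto an open subgroup of the geometric fibre group of $Y\to Y'$. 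Nothing in your argument gives this. If that image were trivial, $\varphi$ would, on a dense open $U\subset X$, factor through $\pi_1(g(U))$ with $\dim g(U)<\dim Y$. That is exactly the behaviour that degeneracy in the sense of Definition \ref{def:polynondeg} captures. So the real obstacle is not dominance of $f$: once $f$ exists, your top-degree argument is fine. The obstacle is proving that a stably (étale) cohomologically injective $\varphi$ is polynondegenerate, after which Hoshi's Theorem \ref{thm:hoshi} supplies $f$. In your scheme, beyond openness, the cohomological hypothesis only enters after $f$ has been built.

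\textbf{What the paper does.} It proves precisely this nondegeneracy (Proposition \ref{prop:hodgepoly}, Corollary \ref{cor:polynondeg}). Your top-degree class cannot replace its key input. A degeneracy only yields a factorisation on a dense open $U\subset X$, and $H^{2n}(X_{\overline{K}},\QQ_\ell)\to H^{2n}(U_{\overline{K}},\QQ_\ell)$ is far from injective. Instead, the paper passes to a finite étale cover of $Y$ (and of each $Y_m$) carrying a nonzero log-canonical form (Corollary \ref{prop:polyhyperbolic}, via Fujita semipositivity and Riemann--Roch). Over a $p$-adic local field it then uses the Hodge--Tate weight-$n$ part of $H^n$, namely $H^0(\overline{Y},\Omega^n_{\overline{Y}/K}(D_Y))$. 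This restricts injectively to $U$ by birational invariance, yet must factor through the weight-$n$ part of $H^n(Z_{\overline{K}},\QQ_p)$, which vanishes because $\dim Z<n$. Spreading out and specialising handles a general sub-$p$-adic $K$. Lemma \ref{lem:kpi1cohdom} then shows that the étale-cohomological injectivity supplied by the homotopy-type hypothesis is exactly what this argument consumes.
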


The Isom-conjecture for hyperbolic polycurves of dimension at most two over sub-$p$-adic fields was proven by Mochizuki in \cite{MochizukiHom}. Hoshi \cite{HoshiGrothendieck} then made a fundamental study of hyperbolic polycurves and proved the Isom-conjecture for hyperbolic polycurves of dimension at most four over sub-$p$-adic fields. For hyperbolic polycurves with the same restriction on dimension, he further proved that certain morphisms of fundamental groups arise from morphisms of varieties. In another direction, Schmidt--Stix \cite{SchmidtStix} proved the Isom-conjecture for \emph{strongly hyperbolic Artin neighborhoods} over fields finitely generated over $\QQ$. Note that strongly hyperbolic Artin neighborhoods constitute a well-behaved class of hyperbolic polycurves. The Isom-conjecture was further proven for certain restricted classes of hyperbolic polycurves in several articles \cite{NagamachiIppei,TakaoAnabelian,SawadaGrothendieck,MochizukiTopicsII}.
\subsection*{Outline}
We now sketch how Theorems \ref{thmintro:isom}, \ref{thmintro:dom} and \ref{thmintro:dometale} are proved. We first show that any hyperbolic polycurve $X$ admits a connected finite étale cover $X' \to X$ such that $X'$ has an snc-compactification whose associated log-canonical bundle has a nonzero global section. To prove this, we use semipositivity results for canonical bundles in conjunction with a Riemann--Roch argument. This is the key geometric input needed for our argument. We then prove, using $p$-adic Hodge theory, that any open homomorphism \[ \varphi \colon \pi_1(X) \rightarrow \pi_1(Y)\] over $G_K$  between hyperbolic polycurves over a $p$-adic local field that is stably cohomologically injective is \emph{nondegenerate}, a notion of Hoshi \cite{HoshiGrothendieck} which we now recall. The homomorphism $\varphi$ is \emph{degenerate} if there exist a nonempty Zariski-open $U$ of $X$ and a smooth, surjective and geometrically connected map $g \colon U \rightarrow Z$, with $Z$ normal and $\dim Z < \dim Y$, such that the composite \[ \pi_1(U) \rightarrow  \pi_1(X) \rightarrow \pi_1(Y)\]  factors through $\pi_1(U) \rightarrow \pi_1(Z)$. We call $\varphi$ \emph{nondegenerate} if it is not degenerate. We then use functoriality of the Hodge--Tate decomposition to prove that such a factorization would force the map on Hodge--Tate weight $n = \dim Y$ subspaces, which is induced by $H^n(Y_{\overline{K}},\QQ_p) \otimes \CC_p \rightarrow H^n(U_{\overline{K}},\QQ_p) \otimes \CC_p$, to be zero. After replacing $Y$ with an étale cover, we can assume that $Y$ has a nonzero log-canonical form. Since $\varphi$ is stably cohomologically injective, the map \[ H^n(Y_{\overline{K}},\QQ_p) \otimes \CC_p \rightarrow H^n(U_{\overline{K}},\QQ_p) \otimes \CC_p \] can be shown to be injective on Hodge--Tate weight $n$ subspaces. However, this contradicts the existence of a nonzero log-canonical form on $Y$, since the map is zero on the Hodge--Tate weight $n$ subspaces. We then proceed by proving that stably cohomologically injective morphisms are nondegenerate over any sub-$p$-adic field by spreading out and specializing. The main theorems are then obtained by using this result in conjunction with a result of Hoshi, which characterizes dominant morphisms into a hyperbolic polycurve in terms of nondegenerate morphisms of fundamental groups. 
\subsection*{Notation and conventions}
A \emph{variety} over a field $K$ is a separated, geometrically connected and reduced scheme of finite type over $K$. The étale fundamental group of a variety $X$ is denoted $\pi_1(X)$, and all maps between profinite groups are continuous. We consistently omit basepoints in our notation for étale fundamental groups, since our statements, as well as proofs, are insensitive to the choice of basepoints. For a field $K$, we write $G_K = \Gal(\overline{K}/K)$ for the absolute Galois group of $K$, where $\overline{K}$ is a separable closure of $K$.

\subsection*{Acknowledgments}
I gratefully acknowledge support from the 
 Deutsche Forschungsgemeinschaft (DFG) through the Collaborative Research Centre
  TRR 326 ``Geometry and Arithmetic of Uniformized Structures'', project number  444845124.  I am very thankful to Benjamin Steklov and Ruth Wild for insightful discussions related to this work. I am particularly grateful to Jakob Stix for his careful reading, for his encouragement, and for his many valuable suggestions.

\section{Preliminaries on hyperbolic polycurves}  
\label{sec:prelpoly}
We recall background on hyperbolic polycurves and their fundamental groups, and prove that any hyperbolic polycurve $X$ admits a connected étale cover $X' \rightarrow X$ such that $X'$ has an snc-compactification whose associated log-canonical bundle admits a nonzero global section. The results on fundamental groups of hyperbolic polycurves are proved in Hoshi's paper \cite{HoshiGrothendieck}.
\subsection{Hyperbolic polycurves and their fundamental groups}
\begin{definition}
  Let $S$ be a scheme. A scheme $X$ over $S$ is a \emph{hyperbolic curve over $S$} if there exist:      
  \begin{enumerate}
  \item a smooth, proper, geometrically connected scheme $\overline{X}$ of relative dimension $1$ over $S$, with geometric fibers of genus $g$, and 
  \item a relative divisor $D \subset \overline{X}$ which is finite étale over $S$ of degree $r$,
  \end{enumerate}
  such that $X \cong \overline{X} \setminus D$ over $S$, and the inequality $2-2g-r < 0$ holds. 
  \end{definition}
In the above definition, $D$ is allowed to be empty. Then $X$ is proper over $S$ and we call $X$ a proper hyperbolic curve over $S$. As noted by Mochizuki \cite[Section 0]{MochizukiAbsoluteHyperbolic}, if $S$ is normal, then the compactification $\overline{X}$ of a hyperbolic curve $X$ over $S$ is unique up to canonical isomorphism. We will call the integer $g$ above the \emph{genus} of $X$ over $S$.
\begin{definition} \label{def:polycurves}
  Let $S$ be a scheme. A morphism $p \colon X \rightarrow S$ is a hyperbolic polycurve over $S$ if there exists a factorization \[ X = X_n \xrightarrow{p_n} X_{n-1} \xrightarrow{p_{n-1}} \cdots \xrightarrow{p_2} X_1 \xrightarrow{p_1} X_0 = S \] of $p$ such that each $X_i \xrightarrow{p_i} X_{i-1}$ is a hyperbolic curve over $X_{i-1}$. We call such a factorization a sequence of \emph{parameterizing morphisms}.
\end{definition}
\begin{remark} \label{rmk:etalecoverpoly}
As explained in \cite[Proposition 2.3]{HoshiGrothendieck}, if $S$ is a connected, separated, normal and noetherian scheme over $\Spec \QQ$, then any connected finite étale cover $p \colon Y \rightarrow X$ of a hyperbolic polycurve $X$ over $S$ is a hyperbolic polycurve over some finite étale cover $S' \xrightarrow{p_0} S$. By induction, it is enough to prove this when $X$ is a hyperbolic curve over $S$, and then one defines $S'$ as the normalization of $Y$ in $S$. We see that if \[ X = X_n \rightarrow X_{n-1} \rightarrow \cdots \rightarrow X_1 \rightarrow S\]  is a sequence of parameterizing morphisms of $X \rightarrow S$, then there are parameterizing morphisms \[ Y = Y_n \rightarrow Y_{n-1} \rightarrow \cdots \rightarrow Y_1 \rightarrow S' \] of $Y \rightarrow S'$ so that one has a natural commutative diagram 
\[ \begin{tikzcd} Y = Y_n \arrow[r] \arrow[d,"p=p_n"] & Y_{n-1} \arrow[r] \arrow[d,"p_{n-1}"] & \cdots \arrow[r]  & Y_1 \arrow[r] \arrow[d,"p_1"]   & S' \arrow[d,"p_0"] \\
X = X_n \arrow[r] & X_{n-1} \arrow[r] & \cdots \arrow[r] & X_1 \arrow[r] & S \end{tikzcd} \label{eq:diagramcomm1}  \]  where all vertical maps are finite étale.
\end{remark}
We recall properties of fundamental groups of hyperbolic polycurves over a characteristic zero base. When working with étale fundamental groups, we will omit basepoints in the notation. Since our schemes are connected, the groups are well defined up to inner automorphism, and the statements we consider are insensitive to this ambiguity. A profinite group $G$ is \emph{slim} if every open subgroup is center-free. It is \emph{elastic} if any non-trivial closed topologically finitely generated subgroup of $G$ that is normal in an open subgroup $H$ of $G$ is open in $G$. As noted in \cite[Remark 0.1.3]{MochizukiAbsoluteHyperbolic}, a profinite group $G$ is slim if and only if, for each open subgroup $H$ of $G$, the centralizer $Z_G(H)$ of $H$ in $G$ is trivial. The following result is \cite[Proposition 2.4]{HoshiGrothendieck}. It implies that if $X$ is a hyperbolic polycurve over a field $K$ of characteristic zero, then $\pi_1(X_{\overline{K}})$ is slim (see also the proof of \cite[Corollary 4.10]{NakamuraGaloisRigidity}).
\begin{lemma} \label{lem:pi1ofpoly}
Let $X$ be a hyperbolic polycurve of relative dimension $n$ over a connected normal noetherian scheme $S$ over $\QQ$, and let
\[ X = X_n \xrightarrow{p_n} X_{n-1} \xrightarrow{p_{n-1}} \cdots \xrightarrow{p_2}  X_1 \xrightarrow{p_1} X_0 = S \] be a sequence of parameterizing morphisms. Let $i,j$ be integers such that 
$0 \leq i < j \leq n$. Fix a basepoint $\overline{x}_i$ of $X_i$. Then:
\begin{enumerate}
\item The map $X_j \rightarrow X_i$ gives rise to a short exact sequence 
\[1 \rightarrow \pi_1(X_j \times_{X_i} \overline{x}_i) \rightarrow \pi_1(X_j) \rightarrow \pi_1(X_i) \rightarrow 1 \] of profinite groups.
\item $\pi_1(X_j \times_{X_i} \overline{x}_i)$ is a non-trivial topologically finitely generated profinite group, which is moreover \emph{slim} and torsion-free. 
\item For any basepoint $\overline{x}_{j-1}$ of $X_{j-1}$, the fundamental group of the geometric fiber $\pi_1(X_j \times_{X_{j-1}} \overline{x}_{j-1})$ is elastic. 
\end{enumerate}
\end{lemma}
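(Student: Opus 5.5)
The plan is to argue by induction on $j-i$, reducing everything to the case of a single hyperbolic curve $X_j \to X_{j-1}$ over a connected normal noetherian base of characteristic zero. For one hyperbolic curve $X_j = \overline{X}_j \setminus D \to X_{j-1}$, the geometric fiber over $\overline{x}_{j-1}$ is a hyperbolic curve of type $(g,r)$ over an algebraically closed field of characteristic zero. Its fundamental group is therefore the profinite completion of the surface group
\[ \langle a_1,b_1,\dots,a_g,b_g,c_1,\dots,c_r \mid \textstyle\prod [a_i,b_i]\prod c_k = 1\rangle. \]
This group is nontrivial, finitely generated and torsion-free. It is slim because $2-2g-r<0$ (Nakamura's result on centers of hyperbolic surface groups), and it is elastic because nontrivial finitely generated normal subgroups of open subgroups have finite index, as in Mochizuki's work. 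The latter is the known elasticity of profinite surface groups, which I would cite rather than reprove. This already gives (3), and it gives (2) for $j=i+1$.

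For exactness in (1) with $j=i+1$, I would use the homotopy exact sequence for the proper smooth map $\overline{X}_j \to X_{j-1}$ with geometrically connected fibers. This is then combined with the theory of tame ramification along the finite étale divisor $D$: in characteristic zero every cover is tame, so the relative version of SGA1 XIII (Grothendieck--Murre) gives exactness
\[ \pi_1(\text{fiber}) \to \pi_1(X_j) \to \pi_1(X_{j-1}) \to 1. \]
Normality of the base is used to ensure that $X_j$ and the fibers are connected, and that $\pi_1(X_j)\to\pi_1(X_{j-1})$ is surjective, the latter because $p_j$ admits étale-local sections and has geometrically connected fibers.

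The main obstacle is injectivity on the left. My first attempt would be to use slimness of the fiber group: the kernel of $\pi_1(F)\to\pi_1(X_j)$ is normal. A standard trick is to produce enough covers of $X_j$ to separate points of $\pi_1(F)$. Concretely, for each characteristic open subgroup $N$ of $\pi_1(F)$, the corresponding cover of fibers is canonical. It therefore extends, after a finite étale base change of $X_{j-1}$, to a cover of $X_j$, which one can construct via level structures on the relative Jacobian together with the finite étale divisor $D$. Alternatively, I would simply invoke the exactness for families of hyperbolic curves in characteristic zero (SGA1 XIII 4.3–4.4), which provides the injectivity.

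For general $i<j$, I would then induct. The map $X_j\times_{X_i}\overline{x}_i \to X_{j-1}\times_{X_i}\overline{x}_i$ is again a hyperbolic polycurve over a geometric fiber, so by induction its fundamental group is an extension of slim, torsion-free, finitely generated groups. Extensions preserve these properties: for torsion-freeness and finite generation this is clear, and slimness passes to extensions $1\to A\to G\to B\to 1$ with $A,B$ slim, since a centralizer of an open subgroup maps to a trivial centralizer in $B$ and so lies in $A$, hence is trivial. Exactness of the composite sequence would follow from a diagram chase with the exact sequences for $X_j\to X_{j-1}$ and $X_{j-1}\to X_i$, comparing them via the fiber over $\overline{x}_i$.
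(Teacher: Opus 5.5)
The paper gives no argument of its own for this lemma; it quotes Hoshi's Proposition~2.4. Most of your reconstruction is sound:
\begin{itemize}
\item The fiber group is a profinite surface group, and the cited results on slimness and elasticity give (3) and the case $j=i+1$ of (2).
\item Nontriviality, finite generation, torsion-freeness and slimness pass to extensions. For slimness, the centralizing element lies in $A$ and centralizes the open subgroup $H\cap A$ of $A$.
\item The right exact part $\pi_1(F)\to\pi_1(X_j)\to\pi_1(X_{j-1})\to 1$ comes from SGA1~XIII.
\item Your diagram chase correctly reduces (1) for general $i<j$ to the single-curve case.
\end{itemize}

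The gap is the one non-formal point: injectivity of $\pi_1(F)\to\pi_1(X)$ for a single hyperbolic curve $X\to S$. Your argument does not establish it.
\begin{itemize}
\item Level structures on the (generalized) relative Jacobian only produce abelian covers of fibers. Iterating reaches only subgroups $N$ for which $\pi_1(F)/\mathrm{core}(N)$ is solvable. So you only learn that the kernel lies in the pro-solvable residual of $\pi_1(F)$, which is nontrivial because $\pi_1(F)$ surjects onto $A_5$.
\item The principle ``a characteristic cover of the fiber is canonical, hence extends after a finite étale base change'' is exactly what has to be proved. It is false without hyperbolicity. For $\mathbb{A}^2\setminus\{0\}\to\mathbb{P}^1$, a $\mathbb{P}^1$-bundle minus two disjoint sections, the fiber group $\widehat{\ZZ}$ maps to the trivial group $\pi_1(\mathbb{A}^2\setminus\{0\})$.
\item The same example shows that the SGA1~XIII homotopy sequence for complements of relative normal crossings divisors cannot by itself give injectivity. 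Hyperbolicity must enter somewhere.
\end{itemize}

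You mention slimness but never use it. The standard way to use it is the diagonal trick:
\begin{itemize}
\item $\mathrm{pr}_2\colon X\times_S X\to X$ is a hyperbolic curve with a section, namely the diagonal.
\item With a section, pointed covers of fibers are rigid. By the local constancy results of SGA1~XIII they extend over finite étale base changes. Hence the fiber group $\Delta_2\cong\pi_1(F)$ of $\mathrm{pr}_2$ injects into $\pi_1(X\times_S X)$.
\item The fiber groups $\Delta_1,\Delta_2$ of the two projections commute, being the images of the two factors of $\pi_1(F\times F)=\pi_1(F)\times\pi_1(F)$. So $\Delta_1\cap\Delta_2$ is central in $\Delta_2$, hence trivial by slimness.
\item On the fiber of $\mathrm{pr}_2$, the map $\mathrm{pr}_1$ restricts to the inclusion $F\hookrightarrow X$. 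By right exactness, $\ker(\mathrm{pr}_{1,*})=\Delta_1$. Therefore $\ker(\pi_1(F)\to\pi_1(X))=\Delta_1\cap\Delta_2=1$.
\end{itemize}

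With this step supplied, or with a citation that genuinely covers left injectivity for hyperbolic curves (as the paper's reference to Hoshi does), the rest of your induction goes through.
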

For two varieties $X,Y$ over a field $K$, we write $\Hom_{G_K}^{\open}(\pi_1(X),\pi_1(Y))$ for the set of $\pi_1(Y_{\overline{K}})$-conjugacy classes of open maps over $G_K$. We denote by $\Hom_{G_K}^{\out,\open}(\pi_1(X_{\overline{K}}),\pi_1(Y_{\overline{K}}))$ the set of conjugacy classes of open maps $\pi_1(X_{\overline{K}}) \rightarrow \pi_1(Y_{\overline{K}})$ which commute with the outer action of $G_K$. We state the following folklore result, for which we could find no precise reference, but see the closely related \cite[2. Corollary]{FaltingsCurves} and \cite[Lemma 7.1]{TamagawaGrothendieck}.
\begin{proposition} \label{prop:homouter}
Let $X$ and $Y$ be varieties over a field $K$. Suppose that $\pi_1(Y_{\overline{K}})$ is slim. Then there is a natural bijection 
\[ \Hom_{G_K}^{\open}(\pi_1(X),\pi_1(Y)) \rightarrow \Hom_{G_K}^{\out,\open}(\pi_1(X_{\overline{K}}),\pi_1(Y_{\overline{K}})), \]  
which sends $\varphi \colon \pi_1(X) \rightarrow \pi_1(Y)$ to the induced map $\pi_1(X_{\overline{K}}) \rightarrow \pi_1(Y_{\overline{K}})$. \end{proposition}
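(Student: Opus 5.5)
We construct the map and then prove injectivity and surjectivity separately, using slimness throughout. Write $\Delta_X = \pi_1(X_{\overline{K}})$ and $\Delta_Y = \pi_1(Y_{\overline{K}})$. These are the kernels of the surjections $\pi_1(X) \to G_K$ and $\pi_1(Y) \to G_K$, since varieties are geometrically connected. Any $\varphi$ over $G_K$ therefore restricts to $\varphi|_{\Delta_X} \colon \Delta_X \to \Delta_Y$. For $\sigma \in \pi_1(X)$ with image $\bar\sigma\in G_K$ we have $\varphi(\sigma x \sigma^{-1}) = \varphi(\sigma)\varphi(x)\varphi(\sigma)^{-1}$, so $\varphi|_{\Delta_X}$ commutes with the outer $G_K$-actions. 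Conjugating $\varphi$ by an element of $\Delta_Y$ conjugates the restriction by the same element, so the map on classes is well defined.

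\textbf{Openness.} We check that $\varphi$ is open if and only if $\varphi|_{\Delta_X}$ is open. If $\varphi$ is open, then $\varphi(\Delta_X) = \varphi(\pi_1(X)) \cap \Delta_Y$, because $\varphi$ is compatible with the projections to $G_K$. This is open in $\Delta_Y$. Conversely, if $\varphi(\Delta_X)$ is open in $\Delta_Y$, then $\varphi(\pi_1(X))$ contains an open subgroup of $\Delta_Y$ and surjects onto $G_K$. Hence it has finite index in $\pi_1(Y)$, and, being closed, it is open.

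\textbf{Injectivity.} Suppose $\varphi,\psi$ are open with $\psi|_{\Delta_X} = \mathrm{inn}(\delta)\circ\varphi|_{\Delta_X}$ for some $\delta\in\Delta_Y$. Replacing $\psi$ by its $\delta^{-1}$-conjugate, we may assume $\varphi$ and $\psi$ agree on $\Delta_X$. For $\sigma\in\pi_1(X)$ set $c(\sigma) = \psi(\sigma)\varphi(\sigma)^{-1}$; it lies in $\Delta_Y$ because both maps lie over $G_K$. For $x\in\Delta_X$ we get
\[ \psi(\sigma)\varphi(x)\psi(\sigma)^{-1} = \psi(\sigma x\sigma^{-1}) = \varphi(\sigma x\sigma^{-1}) = \varphi(\sigma)\varphi(x)\varphi(\sigma)^{-1}. \]
Hence $c(\sigma)$ centralizes $\varphi(\Delta_X)$, which is an open subgroup of $\Delta_Y$. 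By slimness this centralizer is trivial, so $c(\sigma)=1$ and $\varphi=\psi$.

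\textbf{Surjectivity.} Let $f\colon\Delta_X\to\Delta_Y$ be open and commute with the outer actions. Slimness gives $\Delta_Y \cong \mathrm{Inn}(\Delta_Y)$, so there is an exact sequence $1\to\Delta_Y\to\mathrm{Aut}(\Delta_Y)\to\mathrm{Out}(\Delta_Y)\to 1$. Since $\Delta_Y$ is center-free, $\pi_1(Y) \cong \mathrm{Aut}(\Delta_Y)\times_{\mathrm{Out}(\Delta_Y)} G_K$: the map $\pi_1(Y)\to\mathrm{Aut}(\Delta_Y)\times_{\mathrm{Out}}G_K$ is a map of extensions of $G_K$ by $\Delta_Y$. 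Now let $\sigma\in\pi_1(X)$ have image $\bar\sigma\in G_K$, and let $\alpha_\sigma = \mathrm{inn}(\sigma)|_{\Delta_X}$. Compatibility with the outer actions means there exists $\beta\in\mathrm{Aut}(\Delta_Y)$ lifting the outer action of $\bar\sigma$ with $f\circ\alpha_\sigma = \beta\circ f$.

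Any two such $\beta$ agree on the open subgroup $f(\Delta_X)$. We then argue that $\beta$ is unique. If $\beta'$ also works, $\gamma=\beta^{-1}\beta'$ is an automorphism fixing an open subgroup $H$ pointwise and is inner modulo nothing a priori. However, $\beta$ and $\beta'$ both lift the same outer class, so $\gamma$ is inner, say $\gamma=\mathrm{inn}(d)$, and $d$ centralizes $H$. Hence $d=1$. Define $\varphi(\sigma) = (\beta,\bar\sigma)\in\pi_1(Y)$. Uniqueness of $\beta$ makes $\varphi$ a homomorphism extending $f$ (for $x\in\Delta_X$, $\beta=\mathrm{inn}(f(x))$ works) and lying over $G_K$.

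\textbf{Main obstacle.} The point needing care is continuity of $\varphi$. We handle it by noting that $\varphi$ agrees with the continuous $f$ on the open-in-fibres normal subgroup $\Delta_X$, and that the graph of $\varphi$ is closed, being cut out by the closed conditions $f\alpha_\sigma=\beta f$. A homomorphism between profinite groups with closed graph is continuous. Alternatively, since $\Delta_Y$ is topologically finitely generated, $\mathrm{Aut}(\Delta_Y)$ is profinite and the same closed-condition argument applies there.
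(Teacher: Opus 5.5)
Your proof is correct and follows essentially the same route as the paper. Injectivity is proved by showing that $\psi(\sigma)\varphi(\sigma)^{-1}$ centralizes the open subgroup $\varphi(\pi_1(X_{\overline{K}}))$, and surjectivity by showing that each $\sigma$ has a unique compatible element of $\pi_1(Y)$ over $\bar\sigma$; the latter is exactly the paper's statement that $\mathrm{pr}_X \colon \Gamma_{\overline{\varphi}} \to \pi_1(X)$ is bijective, and your description via $\mathrm{Aut}(\Delta_Y)\times_{\mathrm{Out}(\Delta_Y)} G_K$ only repackages that graph, while your explicit treatment of openness and of continuity (via closedness of this graph) fills in points the paper leaves implicit.
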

\begin{proof}
We first prove injectivity. Let $\varphi,\psi \colon \pi_1(X) \rightarrow \pi_1(Y)$ be two maps over $G_K$ which define the same element of $\Hom_{G_K}^{\out,\open}(\pi_1(X_{\overline{K}}),\pi_1(Y_{\overline{K}}))$. After replacing $\psi$ with a conjugate, we can assume that $\varphi$ and $\psi$ coincide on $\pi_1(X_{\overline{K}})$. We define the $1$-cocycle  $\delta \colon \pi_1(X) \rightarrow \pi_1(Y_{\overline{K}})$ by the formula $\delta_s = \varphi(s)\psi(s)^{-1}$ for $s \in \pi_1(X)$; we claim that this factors over $G_K$ as a cocycle, i.e., is constant on right and left cosets of $\pi_1(X_{\overline{K}})$. Since $\delta$ is a $1$-cocycle, we obtain that \[\delta_{st} = \delta_s \psi(s) \delta_t \psi(s)^{-1} \] for any $s,t \in \pi_1(X)$.  This formula clearly implies constancy on right cosets, and constancy on left cosets is derived from constancy on right cosets, via the identity $\delta_{st} = \delta_{t (t^{-1}st)} = \delta_t$ for $s \in \pi_1(X_{\overline{K}}), t \in \pi_1(X)$.  If $s \in \pi_1(X_{\overline{K}})$ and $t$ is arbitrary, we derive, since $\delta$ factors through $G_K$, the equality \[ \delta_t = \delta_{st} = \delta_s \psi(s) \delta_t \psi(s)^{-1}  = \psi(s) \delta_t \psi(s)^{-1}. \]  In other words, \[ \delta_t \in Z_{\pi_1(Y_{\overline{K}})}(\im \psi_{| \pi_1(X_{\overline{K}})}) \] for any $t \in \pi_1(X)$, and this centralizer is trivial by assumption, thus $\varphi = \psi$.

We now prove surjectivity. Given $\overline{\varphi} \in  \Hom_{G_K}^{\out,\open}(\pi_1(X_{\overline{K}}),\pi_1(Y_{\overline{K}}))$, we define the graph of $\overline{\varphi}$ as
 \[ \Gamma_{\overline{\varphi}} = \{ (s,t) \in \pi_1(X) \underset{G_K}{\times} \pi_1(Y) | c_t \circ \overline{\varphi} = \overline{\varphi} \circ c_s \}.  \] We note that the projection $\pr_X \colon \Gamma_{\overline{\varphi}} \rightarrow \pi_1(X)$ is an isomorphism. Indeed, it is surjective since $\overline{\varphi}$ is $G_K$-equivariant, and injective since $\overline{\varphi}$ is open and $\pi_1(Y_{\overline{K}})$ is slim. We then define $\varphi \colon \pi_1(X) \rightarrow \pi_1(Y)$ by the formula $\pr_Y \circ \pr_X^{-1}$, where $\pr_Y \colon \Gamma_{\overline{\varphi}} \rightarrow \pi_1(Y)$ is the projection. Clearly, we have the equality $c_{\overline{\varphi}(s)} \circ \overline{\varphi} = \overline{\varphi} \circ c_s$ for $s \in \pi_1(X_{\overline{K}})$, thus  $\varphi$ restricts to $\overline{\varphi}$ on $\pi_1(X_{\overline{K}})$. \end{proof}
We review the definition of a $K(\pi,1)$-scheme (see \cite[Appendix A]{StixProjective} for further details). Let $X$ be a connected scheme, and let $X_{\et}$ and $X_{\fet}$ denote the étale and finite étale sites of $X$, respectively. For any sheaf $\mathcal{F}$ on $X_{\fet}$, the map of sites $k \colon X_{\et} \rightarrow X_{\fet}$ induces comparison maps $\mathcal{F} \rightarrow \mathrm{R}k_* k^* \mathcal{F}$ in the derived category $D^+(X_{\fet})$, the bounded below category of abelian sheaves on $X_{\fet}$.
\begin{definition} \label{def:kpi1}
Let $X$ be a connected noetherian scheme. Then $X$ is called a $K(\pi,1)$-scheme if for every torsion sheaf $\mathcal{F}$ on $X_{\fet}$, the comparison map $\mathcal{F} \to \mathrm{R}k_* k^* \mathcal{F}$ is an isomorphism. 
\end{definition}
It is well known that, after choosing a basepoint $\overline{x}$ of $X$, the site $X_{\fet}$ identifies with the category of finite sets equipped with a continuous $\pi_1(X,\overline{x})$-action. One then proves that $X$ is a $K(\pi,1)$ if and only if, for every locally constant constructible sheaf $\mathcal{F}$ on $X$, the comparison map $H^i(\pi_1(X,\overline{x}),\mathcal{F}_{\overline{x}}) \rightarrow H^i(X,\mathcal{F})$ is an isomorphism for all $i \geq 0$. 

We now recall a modern construction of the étale homotopy type of a scheme $X$ via shape theory, using $\infty$-categories. We then study the relation between the étale homotopy type and the étale fundamental group, as well as the behaviour of the étale homotopy type under specialization. The étale homotopy type was originally introduced by Artin--Mazur \cite{ArtinMazur} as a pro-object in the homotopy category of simplicial sets, and was later refined by Friedlander \cite{FriedlanderEtaleHomotopy}, who constructed the étale topological type: a functorial lift of the étale homotopy type to the category of pro-simplicial sets. The construction we present here is closely related to Friedlander's construction; see, for example, \cite[Corollary 5.6]{HoyoisHigher}. We refer to \cite[Sections 2 and 3]{HolzschuhReal} or \cite{SchlankEtaleHomotopy} for further details on the construction of the étale homotopy type. 

Following terminology introduced by Clausen--Scholze, we denote by $\Spc$ the $\infty$-category of \emph{anima} (i.e. the $\infty$-category of spaces, in the sense of Lurie). 
Denote by $\RTop_{\infty}$ the $\infty$-category of $\infty$-topoi, where morphisms are given by the right adjoints of geometric morphisms. Given an $\infty$-topos $\mathcal{X}$, one can define  its shape $\Pi(\mathcal{X})$. This is an object of $\Pro(\Spc)$, the pro-category of anima. One can identify $\Pro(\Spc)$ with $\Fun^{\fin}(\Spc,\Spc)^{\op}$, the opposite of the category of finite-limit preserving functors $\Spc \rightarrow \Spc$. If $\mathcal{X}$ is an $\infty$-topos and $p = (p^*,p_*) \colon \mathcal{X} \rightarrow \Spc$ is its unique geometric morphism to $\Spc$, then the shape $\Pi(\mathcal{X})$ of $\mathcal{X}$ is  the left-exact functor $p_* \circ p^* \colon \Spc \rightarrow \Spc$, see \cite[Definition 7.1.6.3]{HTT} for more details. The shape assembles into a functor \[ \Pi \colon \RTop_{\infty} \rightarrow \Pro(\Spc). \]  An anima $X$ is $\pi$-finite if $\pi_0(X)$ is finite, and for all points $x \in X$, the homotopy groups $\pi_n(X,x)$ are finite for all $n \geq 1$, and vanish for sufficiently large $n$. The inclusion $\Pro\left(\Spc_{\pi}\right) \rightarrow \Pro(\Spc)$ has a left adjoint, known as profinite completion.
\begin{definition}
Let $X$ be a scheme, and denote by $\Sh_{\infty}(X_{\et})$ its $\infty$-topos of étale sheaves of anima. We define the étale homotopy type $\Pi(X)$ of $X$ to be the profinite completion of the shape of $\Sh_{\infty}(X_{\et})$. 
\end{definition}	
According to our conventions, the étale homotopy type of a scheme $X$ is an object of the $\infty$-category $\Pro\left(\Spc_{\pi}\right)$, the pro-category of $\pi$-finite anima. As in \cite[Definition E.5.2.1]{SAG}, the functors $\pi_n$ of taking $n$th homotopy groups of anima are extended to $\Pro\left(\Spc_{\pi}\right)$, and take values in profinite sets for $n=0$, and profinite groups for $n \geq 1$.

The étale homotopy type will be of relevance since it enables us to prove stronger anabelian theorems for polycurves. A geometric basepoint $\overline{x}$ of the scheme $X$ induces a basepoint of $\Pi(X)$, and then there is a canonical isomorphism $\pi_1(\Pi(X),\overline{x}) \cong \pi_1(X,\overline{x})$. Further, if $X$ is locally Noetherian, then for any constant, finite étale sheaf $A$ of abelian groups, there is a canonical isomorphism $H^i(\Pi(X),A) \cong H^i(X,A)$. In addition, if $X$ is a qcqs scheme over a field $K$ with separable closure $\overline{K}$, there is a fiber sequence $\Pi(X_{\overline{K}}) \rightarrow \Pi(X) \rightarrow BG_K$ in $\Pro\left(\Spc_{\pi}\right)$ \cite[Example 0.7]{HaineFundamentalFiber}. Thus, given qcqs-schemes $X,Y$ over $K$, a map $\varphi \colon \Pi(X) \rightarrow \Pi(Y)$ over $BG_K$ naturally induces a map $\Pi(X_{\overline{K}}) \rightarrow \Pi(Y_{\overline{K}})$, which is further equivariant with respect to the natural $G_K$ action (see \cite[E.6.4.4]{SAG} for details). Finally, if $X$ is a scheme which is connected and noetherian, then $X$ is a $K(\pi,1)$ if and only if the higher homotopy groups $\pi_i(X,\overline{x}):=\pi_i(\Pi(X),\overline{x})$ vanish for all $i > 1$ \cite[Proposition 4.1.4]{AchingerWild}. Thus, $X$ is a $K(\pi,1)$ if and only if the natural map $\Pi(X) \rightarrow B\pi_1(X)$ to the classifying anima of $\pi_1(X)$ is an equivalence. We now note how covering space theory can be formulated via the étale homotopy type.
\begin{construction} \label{construction:coveringspacse}
Let $X$ be a noetherian, connected and separated scheme, and let $\Pi(X)$ be its étale homotopy type. Then the full sub-$\infty$-category $\Pi(X)_{\fet}$ of $\Pro(\Spc_{\pi})_{/\Pi(X)}$ which has as objects $0$-truncated morphisms (\cite[Definition E.4.1.1]{SAG})  with homotopy fibers \emph{finite sets}, is equivalent to a $1$-category. Indeed, that it is a $1$-category follows from \cite[Proposition E.4.6.1]{SAG}. Any finite étale cover $X' \rightarrow X$ induces by applying $\Pi$ a $0$-truncated morphism $\Pi(X') \rightarrow \Pi(X)$ \cite[Lemma 2.1]{SchmidtStix}. Then, the natural functor $\Pi \colon X_{\fet} \rightarrow \Pi(X)_{\fet}$ is an equivalence of categories, as an argument involving the long exact sequence of fundamental groups and covering space theory shows. 

We now note that given two connected qcqs schemes $X$ and $Y$, and a map $\varphi \colon \Pi(X) \rightarrow \Pi(Y)$ of étale homotopy types, there is an induced functor $\varphi^* \colon \Pi(Y)_{\fet} \rightarrow \Pi(X)_{\fet}$, which, via the equivalences $X_{\fet} \cong \Pi(X)_{\fet}$ and $Y_{\fet} \cong \Pi(Y)_{\fet}$, induces a functor $\varphi^* \colon Y_{\fet} \rightarrow X_{\fet}$. We claim that this is an exact functor of Galois categories. The fact that $\varphi^*$ preserves limits is clear, and the fact that finite colimits are preserved follows from \cite[Theorem E.6.3.1]{SAG}. Thus, we obtain from $\varphi$, naturally a  $\pi_1(Y)$-conjugacy class of a map $\pi_1(X) \rightarrow \pi_1(Y)$. More importantly, $\varphi^*$ induces for each $Y' \in Y_{\fet}$, a pullback diagram \begin{equation}\begin{tikzcd} \Pi(X') \arrow[r,"\varphi"] \arrow[d] & \Pi(Y') \arrow[d] \\ \Pi(X) \arrow[r,"\varphi"] & \Pi(Y) \label{eq:diagramcovering} \end{tikzcd} \end{equation} in $\Pro(\Spc_{\pi})$, where $X' = \varphi^*(Y')$.  Any representative map $\varphi \colon \pi_1(X) \rightarrow \pi_1(Y)$ in the conjugacy class, provides, for $Y' $ corresponding to $H \subset \pi_1(Y)$, an analogous diagram as above, where $X'$ corresponds to $\varphi^{-1}(H)$.
\end{construction}

 The following technical lemma, which studies how the étale homotopy type varies in well-behaved families, might be of independent interest. It is well-known in the proper case, see \cite[Corollary 12.13]{ArtinMazur}, \cite[Proposition 2.49]{HaineNonabelian}. For a prime number $p$, we let $(-)^{p'} \colon \Pro\left(\Spc_{\pi}\right) \rightarrow \Pro\left(\Spc_{\pi}\right)$ be the prime-to-$p$-completion (see \cite[Recollection 1.23]{HaineNonabelian} for details). If $p=0$, we define \[ (-)^{p'} \colon \Pro\left(\Spc_{\pi}\right) \rightarrow \Pro\left(\Spc_{\pi}\right) \]  to be the identity functor.
\begin{lemma} \label{lemma:kpi1family}
Let $S$ be the spectrum of a strictly henselian discrete valuation ring with generic point $\eta$ and closed point $s$, and let $\overline{\eta} \rightarrow S$ and $\overline{s} \rightarrow S$ be geometric points lying over $\eta$ and $s$.  Let $p \geq 0$ denote the characteristic of the residue field at $s$. Let $f \colon X \rightarrow S$ be a morphism which factors as $X \xrightarrow{j} \overline{X} \xrightarrow{\overline{f}} S$, where $j$ is an open immersion such that the complement is a relative snc divisor, and where $\overline{f}$ is proper and smooth. Then:
\begin{enumerate}
\item the natural map $\Pi(X_{\overline{\eta}})^{p'} \rightarrow \Pi(X)^{p'}$ in $\Pro\left(\Spc_{\pi}\right)$ is an equivalence.
\item the natural map $\Pi(X_{\overline{s}})^{p'} \rightarrow \Pi(X)^{p'}$ in $\Pro\left(\Spc_{\pi}\right)$ is an equivalence.
\end{enumerate}
Thus, there is an associated specialization equivalence $\spez \colon \Pi(X_{\overline{\eta}})^{p'} \rightarrow \Pi(X_{\overline{s}})^{p'}$ in $\Pro\left(\Spc_{\pi}\right)$. 
\end{lemma}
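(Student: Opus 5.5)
The plan is to use a Whitehead-type criterion in $\Pro\left(\Spc_{\pi}\right)$. For prime-to-$p$ completions it says: a map $u \colon A \rightarrow B$ of connected objects induces an equivalence $A^{p'} \rightarrow B^{p'}$ as soon as two conditions hold. First, $u$ induces an equivalence between the categories of finite covers of $B$ and of $A$ with prime-to-$p$ monodromy (equivalently, an isomorphism of maximal pro-prime-to-$p$ quotients of $\pi_1$). Second, for every local system $\mathcal{L}$ of finite abelian groups of order prime to $p$ on $B$, the maps $H^i(B,\mathcal{L}) \rightarrow H^i(A,u^*\mathcal{L})$ are isomorphisms for all $i$. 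This criterion can be extracted from \cite{HaineNonabelian} and \cite[Appendix E]{SAG}.

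The connectivity statements are clear: $X$ and both geometric fibers are connected, since $\overline{f}$ is proper smooth with geometrically connected fibers and the boundary is a relative divisor. Both (1) and (2) therefore reduce to statements about prime-to-$p$ fundamental groups and twisted cohomology. Via the étale comparisons $H^i(\Pi(X),A) \cong H^i(X,A)$ and Construction \ref{construction:coveringspacse}, these become statements about the schemes $X$, $X_{\overline{s}}$ and $X_{\overline{\eta}}$.

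\textbf{Step 1 (fundamental groups).} I would invoke tame specialization theory for proper smooth schemes with a relative snc divisor, in the form of SGA1, Exp.~XIII. Any finite étale cover of $X$ (or of a geometric fiber) of degree prime to $p$ is tamely ramified along the boundary. By Abhyankar's lemma it extends, étale-locally on $\overline{X}$, to a Kummer cover of the standard model $\Spec \mathcal{O}[t_1,\dots,t_n][(t_1\cdots t_r)^{-1}]$. Combined with proper base change for $\overline{f}$ over the strictly henselian base $S$, this yields isomorphisms
\[ \pi_1(X_{\overline{s}})^{p'} \xrightarrow{\sim} \pi_1(X)^{p'} \xleftarrow{\sim} \pi_1(X_{\overline{\eta}})^{p'}. \]
Consequently every prime-to-$p$ local system on either fiber extends uniquely to $X$.

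\textbf{Step 2 (cohomology).} Let $\mathcal{L}$ be a locally constant constructible sheaf on $X$ of order prime to $p$. Then $\mathcal{L}$ is tamely ramified along $D=\overline{X}\setminus X$. The key input is the tame purity and base change statement: $\mathrm{R}j_*\mathcal{L}$ is constructible and its formation commutes with arbitrary base change on $S$. This is checked étale-locally, after a Kummer cover trivializing $\mathcal{L}$, using the explicit computation of $\mathrm{R}j_*\mu_m$ along an snc divisor (absolute purity in the relative setting, as in SGA4$\tfrac12$, Th.~finitude, Appendix). It follows that $\mathrm{R}\overline{f}_*\mathrm{R}j_*\mathcal{L}$ is locally constant on $S$: proper base change handles the properness of $\overline{f}$, and smoothness together with the tame boundary gives local acyclicity.

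From this, (2) follows:
\[ H^i(X,\mathcal{L}) = H^i(\overline{X},\mathrm{R}j_*\mathcal{L}) \cong H^i(\overline{X}_{\overline{s}},(\mathrm{R}j_*\mathcal{L})|_{\overline{s}}) = H^i(X_{\overline{s}},\mathcal{L}). \]
For (1), the same local constancy identifies the stalk at $\overline{s}$ with the stalk at $\overline{\eta}$, which is $H^i(X_{\overline{\eta}},\mathcal{L})$. The specialization equivalence $\spez$ is then the composite of the equivalence in (1) with an inverse of the equivalence in (2).

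The main obstacle I expect is Step 2: showing that $\mathrm{R}j_*$ of a tame local system commutes with base change, and the resulting local acyclicity. This is precisely where the hypotheses that the boundary is a relative snc divisor and that the coefficients are prime to $p$ are used. I would first try to reduce, via Abhyankar's lemma and a finite étale Kummer cover (harmless for prime-to-$p$ coefficients by a trace argument), to constant coefficients $\mathbb{Z}/m$ on the standard model. There the computation is the Künneth-type formula $\mathrm{R}^q j_*\mathbb{Z}/m \cong \bigwedge^q \bigl(\bigoplus_{k} \mathbb{Z}/m(-1)_{D_k}\bigr)$, which is visibly compatible with base change.
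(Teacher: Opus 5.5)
Your overall architecture is the paper's: a Whitehead-type reduction, SGA1 Exp.~XIII for the prime-to-$p$ fundamental groups, and base change for $\mathrm{R}j_*$ of tame coefficients (Deligne's cohomological properness, Th.~finitude, Appendice 1.3.1--1.3.3). Combined with local acyclicity of the smooth morphism, this makes $\mathrm{R}^q f_*\mathcal{L}$ locally constant, so that $H^*(X_{\overline{\eta}},\mathcal{L}) \leftarrow H^*(X,\mathcal{L}) \rightarrow H^*(X_{\overline{s}},\mathcal{L})$ are isomorphisms.

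However, the cohomological half fails as you formulate it when $p>0$. Your criterion requires isomorphisms for \emph{every} local system of prime-to-$p$ order, with arbitrary monodromy. Step~2 then asserts that every such $\mathcal{L}$ is tame along $D$. Both claims are false. Here is a counterexample:
\begin{itemize}
\item take $S=\mathrm{Spec}\,\overline{\mathbb{F}}_p[[\varpi]]$ with $p$ odd, and $X=\mathbb{A}^1_S\subset\mathbb{P}^1_S$;
\item let $h\colon Y\to X$ be the finite étale Artin--Schreier cover $y^p-y=\varpi t^2+t$;
\item set $\mathcal{L}=h_*\mathbb{F}_\ell$ with $\ell\neq p$.
\end{itemize}
Then $\mathcal{L}$ has order $\ell$ but monodromy $\mathbb{Z}/p$. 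It is wildly ramified at $\infty$, and its Swan conductor drops under specialization. Concretely, $Y_{\overline{s}}\cong\mathbb{A}^1$ via $y$, so $H^1(X_{\overline{s}},\mathcal{L})=0$. On the other hand, $Y_{\overline{\eta}}$ is an affine curve of genus $(p-1)/2$ with one point at infinity, so $H^1(X_{\overline{\eta}},\mathcal{L})\cong\mathbb{F}_\ell^{p-1}$. Hence at least one of the restriction maps from $H^1(X,\mathcal{L})$ is not an isomorphism. The hypothesis of your criterion is therefore false in general, and no $\mathrm{R}j_*$ base-change argument can cover such $\mathcal{L}$. Step~1 has the same overstatement: a cover of degree prime to $p$ need not be tame. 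What is true is that a Galois cover whose group has order prime to $p$ is tame.

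The repair is exactly what the paper does. For $(-)^{p'}$ it suffices to test lcc sheaves of prime-to-$p$ order whose \emph{monodromy group} also has prime-to-$p$ order; these are precisely the local systems seen by the prime-to-$p$ completion. Such an $\mathcal{L}$ is trivialized by a Galois cover of prime-to-$p$ degree, hence is tame along $D$, and Deligne's cohomological properness applies.

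Two smaller points remain.
\begin{itemize}
\item Your trace argument along the Kummer cover needs the cover's degree to be invertible in the coefficients. This fails, for instance, for an $\mathbb{F}_\ell$-sheaf with unipotent local monodromy of order $\ell$. Use Galois descent instead, namely $\mathrm{R}\Gamma(G,-)$ applied to the pushforward from the Kummer cover, which commutes with base change; or simply cite Deligne.
\item Geometric connectedness of the fibers is not a hypothesis of the lemma, so the bijection on $\pi_0$ needs an argument. You can use the Stein factorization of $\overline{f}$, which is finite étale over the strictly henselian $S$, together with $D$ being nowhere dense in the fibers. Alternatively, use SGA1 XIII 2.8--2.9 as the paper does.
\end{itemize}
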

\begin{proof}
Note that by \cite[Theorem E.3.1.6]{SAG} and Whitehead's theorem \cite[Theorem 4.3]{ArtinMazur}, to prove that the maps are equivalences, it suffices to verify the following two claims.  First, the maps induce a bijection on connected components, and for any choice of basepoints, the induced maps on prime-to-$p$ fundamental groups $\pi^{p'}_1$ are isomorphisms. Second, for any locally constant constructible sheaf $\mathcal{F}$ of order prime-to-$p$ such that the image of the monodromy has prime-to-$p$ order, the induced maps on cohomology groups are isomorphisms.

The first claim follows from \cite[Exposé XIII, Corollaire 2.8-2.9, \S 2.10]{SGA1new}: Corollaire 2.8-2.9 imply that the induced maps on $\pi_0$ are isomorphisms, and that for any basepoints $a \in X_{\overline{\eta}}$ and $b \in X_{\overline{s}}$, the natural maps $\pi_1(X_{\overline{\eta}},a)^{p'} \rightarrow \pi_1(X,a)^{p'}$ and $\pi_1(X_{\overline{s}},b)^{p'} \rightarrow \pi_1(X,b)^{p'}$ are isomorphisms.

Now, let $\mathcal{F}$ be a locally constant constructible sheaf on $X$ of order prime to $p$, and such that the order of the monodromy action is prime to $p$. By \cite[Th. Finitude, Appendice,1.3.1--1.3.3]{SGA41/2}, $f \colon X \rightarrow S$ is cohomologically proper for  $\mathcal{F}$, and $R^qf_* \mathcal{F}$ is constructible for any $q \geq 0$. Moreover, since $f$ is smooth, $f$ is locally acyclic for $\mathcal{F}$. The proof of \cite[Exposé XVI, Corollaire 2.3]{SGA4to3} proves that the specialization map associated to $R^q f_* \mathcal{F}$  is an isomorphism, thus $R^q f_* \mathcal{F}$ is locally constant. This implies that all maps in the zig-zag \[ H^*(X_{\overline{\eta}},\mathcal{F}) \rightarrow H^*(X,\mathcal{F}) \leftarrow H^*(X_{\overline{s}},\mathcal{F}) \] are isomorphisms. 
\end{proof}
With notation and assumptions as in Lemma \ref{lemma:kpi1family}, assume that $X \rightarrow S$ has geometrically connected fibers, and that $S$ is a scheme over $\Spec \QQ$. We then have a diagram in $\Pro\left(\Spc_{\pi}\right)$
\[ \begin{tikzcd}  \Pi(X_{\overline{\eta}}) \arrow[r] \arrow[d] & \Pi(X)  \arrow[d] & \arrow[l]  \Pi(X_{\overline{s}}) \arrow[d] \\ 
			B\pi_1(X_{\overline{\eta}}) \arrow[r] & B \pi_1(X) & \arrow[l] B\pi_1(X_{\overline{s}}), \end{tikzcd} \] 
where all squares are commutative, and all horizontal arrows are equivalences. By inverting the rightmost horizontal arrows, we obtain a commutative diagram \[ \begin{tikzcd}  \Pi(X_{\overline{\eta}}) \arrow[r,"\spez"] \arrow[d] & \Pi(X_{\overline{s}}) \arrow[d] \\ 
			B\pi_1(X_{\overline{\eta}}) \arrow[r,"\spez"] & B\pi_1(X_{\overline{s}}) \end{tikzcd} \] in $\Pro\left(\Spc_{\pi}\right)$, where all horizontal arrows are equivalences. We have thus related the specialization maps from Lemma \ref{lemma:kpi1family} to the classical specialization maps of fundamental groups. The above diagram implies that $X_{\overline{\eta}}$ is a $K(\pi,1)$ if and only if $X_{\overline{s}}$ is a $K(\pi,1)$. Further, for any locally constant constructible sheaf $\mathcal{F}$ on $X$, there is for every $n \geq 0$ a commutative diagram \begin{equation} \begin{tikzcd} H^n(X_{\overline{\eta}},\mathcal{F}_{\overline{\eta}}) & \arrow[l,"\spez^*"] H^n(X_{\overline{s}},\mathcal{F}_{\overline{s}}) \\ 
			H^n(\pi_1(X_{\overline{\eta}}),\mathcal{F}_{\overline{\eta}}) \arrow[u] & \arrow[l,"\spez^*"] H^n(\pi_1(X_{\overline{s}}),\mathcal{F}_{\overline{s}}) \arrow[u] , \label{eq:diagram} \end{tikzcd}  \end{equation}
			where the horizontal maps are isomorphisms.
			
The following well-known lemma will be useful to us to describe the set of homotopy classes of maps between étale homotopy types. It follows immediately from  \cite[Proposition 3.19]{HolzschuhReal}. Recall that to any profinite group $G$, one can assign its classifying anima $BG \in \Pro\left(\Spc_{\pi}\right)$.
\begin{lemma} \label{lemma:mappingspaces}
  Let $H$ and $N$ be profinite groups, and let $X \in \Pro\left(\Spc_{\pi}\right)$ be connected. Suppose given maps 
  \[
p \colon X \to BN, \qquad q \colon BH \to BN,
\]
in $\Pro\left(\Spc_{\pi}\right)$, and assume that $\pi_1(q)$ is surjective. Then
  \[
  \pi_0( \Map_{BN}(X, BH))
  \]
  is identified, via taking $\pi_1$, with \[ \Hom_N(\pi_1(X),H) / \ker(q),\]
  with $\ker(q)$ acting by conjugation.
  \end{lemma}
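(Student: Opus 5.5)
We identify $\pi_0(\Map_{BN}(X,BH))$ with $\Hom_N(\pi_1(X),H)/\ker(q)$ by first computing the unbased mapping space into $BH$ and then passing to the fiber over $p$.

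First we fix basepoints. Since $X$ is connected, we choose a basepoint $x$ of $X$. Its images give basepoints of $BN$, and $BH$ has a unique basepoint up to homotopy. The key input is the standard description of maps into classifying anima of profinite groups in $\Pro(\Spc_{\pi})$: for connected $X$, the space $\Map_*(X,BH)$ of pointed maps is discrete with $\pi_0 = \Hom(\pi_1(X),H)$ (continuous homomorphisms), and the unpointed space is
\[ \Map(X,BH) \simeq \coprod_{[\rho]} B Z_H(\rho), \]
the coproduct running over conjugacy classes. This is what \cite[Proposition 3.19]{HolzschuhReal} supplies; I would reduce to it by writing $X$ as a cofiltered limit of $\pi$-finite anima and $H$ as a limit of finite quotients, using that $BH$ is $1$-truncated, so maps factor through the $1$-truncation $B\pi_1(X)$.

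Next, the relative mapping space $\Map_{BN}(X,BH)$ is the fiber of
\[ q_* \colon \Map(X,BH) \rightarrow \Map(X,BN) \]
over the point $p$. In pointed terms this fiber is computed by the fiber sequence of discrete pointed mapping sets together with the action of loops at the basepoint. A component of the fiber is given by a homomorphism $\rho\colon \pi_1(X)\to H$ together with a path from $q\circ\rho$ to $\pi_1(p)$, i.e. an element $n\in N$ with $c_n\circ q\circ\rho=\pi_1(p)$. Since $\pi_1(q)$ is surjective, we lift $n$ to $H$ and conjugate $\rho$, normalizing $n=1$. Thus components are represented by $\rho\in\Hom_N(\pi_1(X),H)$, i.e. homomorphisms with $q\rho=\pi_1(p)$.

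Two such normalized pairs $(\rho,1)$ and $(\rho',1)$ lie in the same component precisely when there is $h\in H$ with $\rho'=c_h\rho$ and $q(h)$ compatible with the trivial path, i.e. $q(h)=1$. So the identification is exactly the quotient by the conjugation action of $\ker(q)$.

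The main obstacle is making the first step rigorous in $\Pro(\Spc_{\pi})$, namely the commutation of mapping spaces with cofiltered limits and the profinite-group version of $\Map(X,BH)$. Once this is quoted from Holzschuh, the remainder is the elementary fiber computation above.
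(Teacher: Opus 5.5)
Your argument is correct and is essentially the paper's: the paper only says the lemma follows immediately from \cite[Proposition 3.19]{HolzschuhReal}, while you use that same input to describe the unbased mapping space $\Map(X,BH)$ and then spell out the passage to the fiber over $p$. Your action-groupoid computation of that fiber is right: surjectivity of $\pi_1(q)$ lets you normalize the path $n$ to $1$, and the isomorphisms between normalized objects are then exactly the elements of $\ker(q)$ acting by conjugation (so $\Map_{BN}(X,BH)$ is in fact the action groupoid of $\ker(q)$ on $\Hom_N(\pi_1(X),H)$).
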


 We conclude this subsection by recalling the following well-known proposition, which is proved exactly as in \cite[Proposition 2.8]{SchmidtStix}.
\begin{proposition} \label{prop:kpi1}
  Let $S$ be a smooth variety over a field of characteristic zero, and let $X$ be a hyperbolic curve over $S$. Then if $S$ is a $K(\pi,1)$, so is $X$.
\end{proposition}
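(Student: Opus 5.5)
The plan is to show that $\Pi(X) \to B\pi_1(X)$ is an equivalence, or equivalently (by \cite[Proposition 4.1.4]{AchingerWild}) that the higher homotopy pro-groups $\pi_i(\Pi(X))$ vanish for $i > 1$. We get this from a fiber sequence of étale homotopy types attached to $X \to S$, following the argument of \cite[Proposition 2.8]{SchmidtStix}.

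\emph{Step 1: local constancy of the fiber's homotopy type.} Write $X = \overline{X} \setminus D$, where $\overline{X} \to S$ is smooth proper of relative dimension one and $D$ is finite étale over $S$; then $D$ is a relative snc divisor. Since the base has characteristic zero, Lemma \ref{lemma:kpi1family} applies after strict henselization at any codimension-one point, with $p=0$ so that no completion is needed. By the usual argument with $R^qf_*\mathcal{F}$ for locally constant constructible $\mathcal{F}$, the sheaves $R^qf_*\mathcal{F}$ are locally constant constructible on $S$, and $f$ behaves like a fibration at the level of étale homotopy types.

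\emph{Step 2: the fiber sequence.} We want a fiber sequence $\Pi(X_{\overline{s}}) \to \Pi(X) \to \Pi(S)$ in $\Pro(\Spc_{\pi})$, where $\overline{s}$ is a geometric point of $S$. Rather than invoke a general theorem, we check the consequence we need directly. The geometric fiber $X_{\overline{s}}$ is a hyperbolic curve over an algebraically closed field, hence a $K(\pi,1)$ (for affine curves and proper curves of genus $\geq 1$ this is classical). Lemma \ref{lem:pi1ofpoly}(1) gives the exact sequence $1 \to \pi_1(X_{\overline{s}}) \to \pi_1(X) \to \pi_1(S) \to 1$. Now take a locally constant constructible sheaf $\mathcal{F}$ on $X$, corresponding to a finite $\pi_1(X)$-module $M$. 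On the geometric side, the Leray spectral sequence for $f$ gives $H^p(S, R^qf_*\mathcal{F}) \Rightarrow H^{p+q}(X,\mathcal{F})$, and the stalk of $R^qf_*\mathcal{F}$ is $H^q(X_{\overline{s}},\mathcal{F}) = H^q(\pi_1(X_{\overline{s}}),M)$ as a $\pi_1(S)$-module, using that the fiber is a $K(\pi,1)$. On the group side, the Hochschild--Serre spectral sequence gives $H^p(\pi_1(S), H^q(\pi_1(X_{\overline{s}}),M)) \Rightarrow H^{p+q}(\pi_1(X),M)$. Since $S$ is a $K(\pi,1)$, the $E_2$-terms agree for locally constant constructible coefficients on $S$. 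The comparison maps are compatible with both spectral sequences, so $H^n(\pi_1(X),M) \to H^n(X,\mathcal{F})$ is an isomorphism for all $n$, and therefore $X$ is a $K(\pi,1)$.

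\emph{Main obstacle.} The delicate point is Step 1: we must show that $R^qf_*\mathcal{F}$ is locally constant with stalks computed by the geometric fiber, \emph{with compatible monodromy}, for an arbitrary locally constant constructible $\mathcal{F}$ on $X$ and not only for those pulled back from $S$. The plan is to use cohomological properness and local acyclicity for the compactifiable snc pair $(\overline{X}, D)$, as in the proof of Lemma \ref{lemma:kpi1family}, citing \cite[Th.\ Finitude, Appendice]{SGA41/2} and the argument of \cite[Exposé XVI, Corollaire 2.3]{SGA4to3}. In characteristic zero, tameness is automatic, which is what makes this work. The other point requiring care is that the comparison maps respect the spectral sequences. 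I would handle this by first pulling $\mathcal{F}$ back to a finite étale cover of $X$ that trivializes the monodromy on the fiber, and then using the naturality of Leray and Hochschild--Serre with respect to $X_{\fet} \to X_{\et}$.
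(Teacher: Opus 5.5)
Your proposal is correct and is essentially the standard fibration argument that the paper defers to via \cite[Proposition 2.8]{SchmidtStix}: exactness of $1 \to \pi_1(X_{\overline{s}}) \to \pi_1(X) \to \pi_1(S) \to 1$ (Lemma \ref{lem:pi1ofpoly}), local constancy and base change of $R^qf_*\mathcal{F}$ for the complement of the relative snc divisor, the $K(\pi,1)$ property of the geometric fiber, and a comparison of the Leray and Hochschild--Serre spectral sequences. Two things in Step 1 and the last paragraph need tidying: applying Lemma \ref{lemma:kpi1family} only at codimension-one points of $S$ does not give local constancy over a base of dimension $\geq 2$, so the real input is \cite[Th.\ Finitude, Appendice]{SGA41/2}, which gives local constancy and base change together (tameness being automatic in characteristic zero); and the auxiliary finite étale pullback in your last paragraph is unnecessary, since naturality for the commutative square of topoi $X_{\et} \to X_{\fet}$, $S_{\et} \to S_{\fet}$ already produces the required morphism of spectral sequences.
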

Proposition \ref{prop:kpi1} implies that any hyperbolic polycurve over a field of characteristic zero is a $K(\pi,1)$.
\subsection{Canonical bundles of hyperbolic polycurves}\par
Let $X$ be a hyperbolic polycurve over a field $K$ of characteristic zero. In this subsection, we prove that there is a finite étale cover $Y \rightarrow X$ with the property that $Y$ admits a nonzero log-canonical form. 

Recall that a polycurve $X$ over $K$ has a sequence of parameterizing morphisms \begin{equation} X = X_n \to X_{n-1} \to \cdots \to X_1 \to X_0 = \Spec K. \label{eq:polypara} \end{equation} By \cite[Theorem 1]{SaitoLogSmooth}, this sequence admits a log-smooth compactification
    \begin{equation} (\mathscr{X},X) = (\mathscr{X}_n,X_n) \to (\mathscr{X}_{n-1},X_{n-1}) \to \cdots \to (\mathscr{X}_1, X_1) \to \Spec K, \label{eq:polyparaco} \end{equation} such that $\mathscr{X}_i$ is a regular scheme, proper over $\mathscr{X}_{i-1}$, and the complement $\mathcal{D}_i = \mathscr{X}_i \setminus X_i$ is a normal crossings divisor. Further, $(\mathscr{X}_i,X_i)$ is a log-scheme, with log-structure defined by the divisor $\mathcal{D}_i$, and the map $(\mathscr{X}_i,X_i) \to (\mathscr{X}_{i-1},X_{i-1})$ is log-smooth and extends the family of curves $\overline{X}_i \to X_{i-1}$. We call such a log-smooth compactification of the sequence  \[X = X_n \to X_{n-1} \to \cdots \to X_1 \to X_0 = \Spec K\] a \emph{compactification} of the sequence of parameterizing morphisms, or a compactified sequence of parameterizing morphisms. 
    
Suppose now given a polycurve $X \rightarrow \Spec K$ of dimension $n \geq 1$, and a sequence of parameterizing morphisms as in Diagram \ref{eq:polypara}, and a corresponding compactification as in Diagram \ref{eq:polyparaco}. Let $\overline{\eta} \rightarrow X_1$ be a geometric point lying over the generic point of $X_1$, and for a scheme $Y$ over $X_1$ or $\mathscr{X}_1$, we denote by $Y'$ the base-change to $\overline{\eta}$, for ease of notation. Note that we, by base-changing Diagram \ref{eq:polyparaco} along $\overline{\eta}$, obtain a compactified sequence of parameterizing morphisms for the polycurve $X_{\overline{\eta}} \rightarrow \overline{\eta}$, which is of dimension $n-1$. More precisely, the sequence \[ X'_n \rightarrow X'_{n-1} \rightarrow \cdots \rightarrow X'_2 \rightarrow \overline{\eta} \] is a sequence of parameterizing morphisms for the hyperbolic polycurve $X'_n  \rightarrow \overline{\eta}$ of dimension $n-1$, and \[ (\mathscr{X}',X') = (\mathscr{X}'_n,X'_n) \to (\mathscr{X}'_{n-1},X'_{n-1}) \to \cdots \to (\mathscr{X}'_2, X'_2) \to \overline{\eta} \] is a compactification of this sequence of parameterizing morphisms. 
\begin{proposition} \label{prop:polycompactification}
Let $X$ be a hyperbolic polycurve of dimension $n \geq 1$ over a field $K$ of characteristic zero, and assume that there exists a sequence of parameterizing morphisms  \begin{equation} X = X_n \to X_{n-1} \to \cdots \to X_1 \to X_0 = \Spec K \end{equation} such that for all $i$, the genus of $X_i \rightarrow X_{i-1}$ is greater than or equal to two. Then for any compactification \begin{equation} (\mathscr{X},X) = (\mathscr{X}_n,X_n) \to (\mathscr{X}_{n-1},X_{n-1}) \to \cdots \to (\mathscr{X}_1, X_1) \to \Spec K, \end{equation} of the given sequence of parameterizing morphisms, it holds that $\dim_K H^0(\mathscr{X},\omega_{\mathscr{X}/K})>0$, i.e., $\mathscr{X}$ has a nonzero canonical form. 
\end{proposition}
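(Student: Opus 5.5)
Induction on $n$, using $\omega_{\mathscr{X}/K} \cong \omega_{\mathscr{X}/\mathscr{X}_{n-1}} \otimes f^*\omega_{\mathscr{X}_{n-1}/K}$, where $f \colon \mathscr{X} = \mathscr{X}_n \to \mathscr{X}_{n-1}$ is the last compactified map. This formula is exact where $f$ is smooth and holds in general as long as we work with dualizing sheaves of the regular total spaces; this is the point to watch. The aim is to produce a nonzero section of the first factor twisted by a pullback.

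Case $n=1$: $\mathscr{X}_1$ is the smooth proper model $\overline{X}_1$ of a curve of genus $g \geq 2$, so $h^0(\omega) = g > 0$.

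Inductive step, by a Riemann--Roch plus semipositivity argument. Over the locus $X_{n-1}$ the map $f$ is a smooth proper family of genus $g\ge 2$ curves, so $\mathcal{E} = f_*\omega_{\mathscr{X}/\mathscr{X}_{n-1}}$ is a vector bundle of rank $g$ on the regular $\mathscr{X}_{n-1}$ (torsion free, and locally free away from codimension two; reflexive-hull issues are handled on the regular scheme). By Fujita / Kawamata / Viehweg semipositivity, $\mathcal{E}$ is nef, hence weakly positive. Projection formula gives
\[ H^0(\mathscr{X},\omega_{\mathscr{X}/K}) \supseteq H^0(\mathscr{X}_{n-1}, \mathcal{E}\otimes \omega_{\mathscr{X}_{n-1}/K}), \]
with equality up to the relative-dualizing comparison above; in fact $f_*\omega_{\mathscr{X}/K} = \mathcal{E}\otimes\omega_{\mathscr{X}_{n-1}}$ holds with $f_*\omega_{\mathscr{X}/\mathscr{X}_{n-1}}$ defined via the dualizing sheaf. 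By induction, applied to the compactified sequence for $X_{n-1}$ (all genera still $\geq 2$), $\omega_{\mathscr{X}_{n-1}/K}$ has a nonzero section $s$. Then it suffices to find a nonzero section of $\mathcal{E}$, since $t\otimes s \neq 0$.

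Finding sections of $\mathcal{E}$ is where the proof cannot be direct: a nef bundle need not have sections. I would instead exploit the induction more symmetrically. Use the description via the generic fiber over $X_1$ set up above: $\omega_{\mathscr{X}/K} = \omega_{\mathscr{X}/\mathscr{X}_1}\otimes \pi^*\omega_{\mathscr{X}_1}$, where $\pi\colon\mathscr{X}\to\mathscr{X}_1$ and $\mathscr{X}_1=\overline{X}_1$ is a curve of genus $g_1\ge2$. Set $\mathcal{F} = \pi_*\omega_{\mathscr{X}/\mathscr{X}_1}$, a vector bundle on the curve $\overline{X}_1$. Its rank equals $h^0(\mathscr{X}',\omega_{\mathscr{X}'})$ for the geometric generic fiber $\mathscr{X}'$, which is a compactified polycurve of dimension $n-1$ with genera $\geq 2$. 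So $\operatorname{rk}\mathcal{F} = r>0$ by induction. Semipositivity on the curve gives $\deg \mathcal{F}\ge 0$. Riemann--Roch on $\overline{X}_1$ then gives
\[ h^0(\mathcal{F}\otimes\omega_{\overline{X}_1}) \ge \deg\mathcal{F} + r(2g_1-2) + r(1-g_1) = \deg \mathcal{F} + r(g_1-1) > 0. \]
Finally, $H^0(\mathscr{X},\omega_{\mathscr{X}/K}) = H^0(\overline{X}_1,\mathcal{F}\otimes\omega_{\overline{X}_1})$ by the projection formula.

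The main obstacle is justifying on the possibly singular-fibered log-smooth family $\pi$ that:
\begin{enumerate}
\item $\omega_{\mathscr{X}/K}\cong\omega_{\mathscr{X}/\mathscr{X}_1}\otimes\pi^*\omega_{\overline{X}_1}$ with $\omega_{\mathscr{X}/\mathscr{X}_1}$ the relative dualizing sheaf (fine, since $\mathscr{X}$ is regular, hence Gorenstein, and $\overline{X}_1$ is a smooth curve);
\item $\pi_*\omega_{\mathscr{X}/\mathscr{X}_1}$ is locally free of rank $r$ (torsion-free over a Dedekind base) with nonnegative degree, by Kawamata/Fujita semipositivity, which requires only a smooth projective total space after resolution (regular plus proper over $K$ in characteristic zero suffices, via Chow and birational invariance of $\pi_*\omega$).
\end{enumerate}
Base change to $\overline{K}$ is harmless since $h^0$ is stable under field extension.
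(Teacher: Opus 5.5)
Your final argument is correct and is the paper's proof: push forward along $\pi\colon\mathscr{X}\to\mathscr{X}_1$, get $\rk \pi_*\omega_{\mathscr{X}/\mathscr{X}_1}>0$ by applying induction to the geometric generic fiber, get $\deg\ge 0$ from Fujita semipositivity, and conclude with $h^0\ge\chi=\deg\mathcal{F}+r(g_1-1)>0$ from Riemann--Roch on the curve. Delete the first attempt via $\mathscr{X}_n\to\mathscr{X}_{n-1}$ entirely: it is not needed, and it contains unjustified claims such as local freeness of $\mathcal{E}$ over a higher-dimensional base.
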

\begin{proof}
We prove our proposition by induction on $n = \dim X$. When $n=1$ this is obvious, as the genus, by definition, equals $\dim_K H^0(\mathscr{X},\omega_{\mathscr{X}/K})$. Assume the statement holds for all dimensions less than $n$. We write $f \colon \mathscr{X} \rightarrow \mathscr{X}_1$ for the map to the base curve. Since $f$ is a map between smooth varieties over $K$, $f$ is by \stacks{0E9K} an lci-morphism. Thus, by \cite[Theorem 4.9]{LiuAlgebraic}, we have the equality $\omega_{\mathscr{X}/K} = f^* \omega_{\mathscr{X}_1/K} \otimes \omega_{\mathscr{X}/\mathscr{X}_1}$ of dualizing sheaves. Using the projection formula, we obtain that  \[ H^0(\mathscr{X},\omega_{\mathscr{X}/K}) = H^0(\mathscr{X}_1,f_*(f^* \omega_{\mathscr{X}_1/K} \otimes \omega_{\mathscr{X}/\mathscr{X}_1})) \] equals 
  \[ H^0(\mathscr{X}_1,\omega_{\mathscr{X}_1/K} \otimes f_*\omega_{\mathscr{X}/\mathscr{X}_1}). \]  We further have the inequality
  \[ h^0(\mathscr{X}, \omega_{\mathscr{X}/K}) \geq \chi(\mathscr{X}_1,\omega_{\mathscr{X}_1/K} \otimes f_*\omega_{\mathscr{X}/\mathscr{X}_1}), \] and applying Riemann--Roch to the smooth projective curve $\mathscr{X}_1$, the right-hand side equals \[ \deg(\omega_{\mathscr{X}_1/K} \otimes f_*\omega_{\mathscr{X}/\mathscr{X}_1}) + \rk(f_*\omega_{\mathscr{X}/\mathscr{X}_1})(1-g_{\mathscr{X}_1}),\]  where $g_{\mathscr{X}_1}$ is the genus of $\mathscr{X}_1$. Expanding the degree, the above equals \[ \deg(f_*\omega_{\mathscr{X}/\mathscr{X}_1}) + \rk(f_*\omega_{\mathscr{X}/\mathscr{X}_1})(g_{\mathscr{X}_1}-1). \] Note that the rank of the vector bundle $f_*\omega_{\mathscr{X}/\mathscr{X}_1}$ is nonzero by induction: it is a vector bundle, and at the pullback to a geometric generic point $\overline{\eta}$ it equals $H^0(\mathscr{X}_{\overline{\eta}},\omega_{\mathscr{X}_{\overline{\eta}}/\overline{\eta}})$, which by our induction assumption and assumptions on $X$ is nonzero. Further, by construction, $g_{\mathscr{X}_1}-1 >0$, and by \cite[Theorem 0.6]{FujitaKähler}, $f_* \omega_{\mathscr{X}/\mathscr{X}_1}$ has non-negative degree. Thus, $\chi(\mathscr{X}_1,\omega_{\mathscr{X}_1/K} \otimes f_*\omega_{\mathscr{X}/\mathscr{X}_1}) >0$, and our proposition follows.
\end{proof}
\begin{remark} \label{rmk:logbirational}
Let $X$ be a smooth variety over a field $K$ of characteristic zero and let $\mathscr{X}$ be an snc-compactification of $X$ and $D = \mathscr{X} \setminus X$. Then $H^0(\mathscr{X},\omega_{\mathscr{X}/K}(D))$ is independent of the choice of $(\mathscr{X},D)$ in the following sense \cite[\S 11.1]{IitakaAlgebraic}:  any two snc-compactifications $\mathscr{X}_1$ and $\mathscr{X}_2$ are dominated by a third, $\mathscr{X}_3$, and this gives rise to isomorphisms \[ H^0(\mathscr{X}_i,\omega_{\mathscr{X}_i/K}(D_i)) \xrightarrow{\sim} H^0(\mathscr{X}_3,\omega_{\mathscr{X}_3/K}(D_3)) \]  for $i= 1,2$. \end{remark}
Proposition \ref{prop:polycompactification} and Remark \ref{rmk:logbirational} imply that if $X$ admits a sequence of parameterizing morphisms in which every successive curve has genus at least two, then $X$ has a nonzero log-canonical form. Recall from Remark \ref{rmk:etalecoverpoly} that any connected finite étale cover of a hyperbolic polycurve is a hyperbolic polycurve. Further, an arbitrary hyperbolic polycurve $X$ has such a sequence of parameterizing morphisms after passing to a finite étale cover. Moreover, having such a sequence of parameterizing morphisms is preserved under finite étale covers. We obtain:
\begin{corollary}\label{prop:polyhyperbolic}
Let $X$ be a hyperbolic polycurve over a field $K$ of characteristic zero. Then there exists a finite étale cover $Y$ of $X$ such that $Y$ and all finite étale covers of $Y$ have a nonzero log-canonical form. 
\end{corollary}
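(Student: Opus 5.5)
The plan is to reduce the corollary to Proposition \ref{prop:polycompactification} by first passing to a finite étale cover in which every fibration in a chosen parameterizing sequence has fibers of genus at least two. Then Remark \ref{rmk:logbirational} turns a nonzero canonical form on the compactification into a nonzero log-canonical form.

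\textbf{Step 1: the genus-raising cover.} Fix a sequence of parameterizing morphisms $X = X_n \to \cdots \to X_1 \to \Spec K$. We construct the cover by induction along the tower, working from the bottom. For a hyperbolic curve $X_1$ over $K$, we choose a connected finite étale cover of genus $\geq 2$. Such a cover exists because $\pi_1(X_{1,\overline K})$ is a nonabelian surface-type group; concretely, one can pull back along multiplication by $2$ on the Jacobian of $\overline{X}_1$, or take a cover ramified over the cusps when $g\le 1$ but $r>0$, which becomes étale on $X_1$, and then use Riemann--Hurwitz. A finite extension of $K$ may be needed; this is allowed because, by Remark \ref{rmk:etalecoverpoly}, we get a polycurve over a finite étale $S'\to \Spec K$, and a polycurve over $K'$ is still a variety over $K'$, which suffices for the nonvanishing statement.

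For the inductive step, with $X_i \to X_{i-1}$ a hyperbolic curve, we use Lemma \ref{lem:pi1ofpoly}: the fiber group $\Delta=\pi_1(X_i\times_{X_{i-1}}\overline{x})$ is topologically finitely generated. Hence it has a characteristic open subgroup $\Delta'$, for instance the kernel of the map to $H_1(\Delta,\mathbb{Z}/m)$ for $m\ge 3$, or an intersection of all subgroups of a fixed index. Because $\Delta'$ is characteristic, it is normal in $\pi_1(X_i)$. The corresponding cover $X_i'\to X_i$ is a hyperbolic curve over some finite étale cover $X_{i-1}''\to X_{i-1}$, again by Remark \ref{rmk:etalecoverpoly}. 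Its fibers are the covers of the fibers given by $\Delta'$, and by Riemann--Hurwitz these have genus $\geq 2$ once the index is large enough relative to the cusps. We then base change the upper part of the tower along these covers and iterate. Applying the whole construction yields a cover $Y\to X$ with a parameterizing sequence all of whose fiber genera are $\ge 2$.

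\textbf{Step 2: stability under further covers.} If $Y'\to Y$ is any connected finite étale cover, Remark \ref{rmk:etalecoverpoly} gives a compatible parameterizing sequence for $Y'$ whose layers are finite étale over those of $Y$. A finite étale cover, possibly after a base change of the base, of a curve of genus $\ge2$ is a disjoint union of curves of genus $\ge 2$, by Riemann--Hurwitz since $2g-2$ gets multiplied. So the genus condition persists for $Y'$.

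\textbf{Step 3: conclusion.} For $Y$ and for each such $Y'$, Proposition \ref{prop:polycompactification}, applied over the relevant finite extension $K'$, gives $H^0(\mathscr Y,\omega)\neq 0$ for a log-smooth compactification $\mathscr Y$. Since $\omega\subset\omega(D)$, this gives a nonzero log-canonical form. By Remark \ref{rmk:logbirational} this holds for every snc-compactification, after resolving $\mathscr Y$ to an snc one if necessary. A nonzero form over $K'$ is in particular a nonzero form on $Y'$ viewed over $K$, via restriction of scalars of the dimension count.

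The main obstacle is Step 1: arranging that the cover of each layer is uniform over the whole base rather than only over a fiber, while at the same time keeping geometric connectedness and the hyperbolic-curve structure. Choosing characteristic subgroups of the fiber group handles the uniformity, and Remark \ref{rmk:etalecoverpoly} handles the structure.
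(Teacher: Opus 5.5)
Your overall route is the paper's. The paper observes that Proposition~\ref{prop:polycompactification} and Remark~\ref{rmk:logbirational} give a nonzero log-canonical form whenever some parameterizing sequence has all fiber genera $\geq 2$. It then simply asserts that such a sequence exists after a finite étale cover and persists under further covers. Your Steps~2 and~3 are fine, but the justification you add in Step~1 fails as written.

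The subgroup $\Delta'$ is open in the fiber group $\Delta$. However, for $i\geq 2$ the group $\Delta$ has infinite index in $\pi_1(X_i)$, since the quotient is $\pi_1(X_{i-1})$. So there is no finite étale cover of $X_i$ ``corresponding'' to $\Delta'$. Normality of $\Delta'$ in $\pi_1(X_i)$, which is all that characteristicness buys, does not address this. What you need is an open normal subgroup $H\subseteq\pi_1(X_i)$ with $H\cap\Delta\subseteq\Delta'$. It exists because $\Delta$ is closed in $\pi_1(X_i)$ and carries the subspace topology; equivalently, the finite subgroup $\Delta/\Delta'$ of the profinite group $\pi_1(X_i)/\Delta'$ meets some open normal subgroup trivially. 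By Remark~\ref{rmk:etalecoverpoly} and Lemma~\ref{lem:pi1ofpoly}, the cover attached to $H$ is a hyperbolic curve over the finite étale cover of $X_{i-1}$ attached to the image of $H$, with geometric fiber group $H\cap\Delta$. Its fibers dominate the curve attached to $\Delta'$, so their genus is at least as large. Thus the uniformity over the base comes from the profinite topology, and any open $\Delta'$ would do.

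Your genus criterion also needs correcting. In Riemann--Hurwitz what matters is the order of the images of the cuspidal inertia generators, not the index.
For $(g,r)=(1,1)$ the cuspidal loop is a commutator, so the cover cut out by $\Delta\to H_1(\Delta,\mathbb{Z}/m)$ is unramified over the cusp and has genus $1$ for every $m$.
For $(g,r)=(0,3)$ and $m=3$ it is the Fermat cubic, again of genus $1$.
Pulling back along multiplication by $2$ on the Jacobian never raises the genus when $g\leq 1$.

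Take instead $\Delta'$ to be the intersection of all open subgroups of index $\leq N$ for $N$ large. This works because the cuspidal generators have infinite order ($\Delta$ is torsion-free), so their images in $\Delta/\Delta'$ have order tending to infinity with $N$. Hyperbolicity then makes $2g-2+\sum_j(1-1/e_j)$ eventually positive. With these two repairs your argument proves the corollary.
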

  \begin{remark}
  It is not true that any hyperbolic polycurve has a nonzero log-canonical form. Let $K$ be an algebraically closed field of characteristic zero, and $n \geq 5$ an odd integer, and consider the $\mu_n$-cover \[ f \colon C \rightarrow \mathbb{P}^1_K \]  defined by the equation $y^n=x(x-1)$. This cover is (totally) ramified exactly at $0,1$ and $\infty$. By Riemann--Hurwitz, $C$ has genus $\dfrac{n-1}{2} \geq 2$. Let $U = \mathbb{P}^1_K - \{0,1,\infty\}$ and consider the projective hyperbolic curve \[ X = f^{-1}(U) \times_K C \rightarrow f^{-1}(U) \]  over $f^{-1}(U)$. By taking the quotient with respect to the diagonal $\mu_n$-action, we obtain a hyperbolic curve $Y = X / \mu_n \rightarrow U$ over $U$, clearly $Y$ is a hyperbolic polycurve over $K$. We denote by $\pi \colon X \rightarrow Y$ the quotient map. Then it is clear that the global sections of the log-canonical bundle $H^0(Y,\omega_{Y/K}^{\log})$ inject via $\pi^*$ (see \cite[Proposition 11.3]{IitakaAlgebraic} for the definition of this pullback) into the $\mu_n$-invariant sections of $H^0(X,\omega_{X/K}^{\log})$. However, $H^0(X,\omega_{X/K}^{\log})$ identifies via the Künneth formula with \[ H^0(C,\omega_{C/K}(D)) \otimes H^0(C,\omega_{C/K}), \] where $D$ is the reduced divisor of $C$ with support at the points above $0,1$ and $\infty$. A weight computation readily shows that no nonzero $\mu_n$-invariant element exists in $H^0(C,\omega_{C/K}(D)) \otimes H^0(C,\omega_{C/K})$, hence $Y$ has no nonzero log-canonical section. \end{remark}
  \begin{remark}
  For proper hyperbolic polycurves, one can obtain a much stronger result. Note that if $X$ is a proper hyperbolic polycurve over $\Spec \CC$, the associated complex analytic space $X(\CC)$ has as its universal cover a bounded domain of holomorphy by Bers' uniformization \cite[Lemma 6.2]{GriffithsComplexZariski}. Since the topological fundamental group of $X(\CC)$ is residually finite, \cite[Corollary C]{YooBergman} implies that there is a finite étale cover $X' \rightarrow X$ such that $X'$ has a canonical bundle which is globally generated and separates tangent vectors. Further, \cite[Corollary C]{YooBergman} shows that the finite étale covers with canonical bundles satisfying this property are cofinal in a certain sense.  By the Lefschetz principle, the same holds for any proper polycurve $X$ over a field of characteristic zero. It is our belief that for any proper polycurve $X$ over a field of characteristic zero, there is a finite étale cover with \emph{very ample} canonical bundle, and that suitable log-generalizations apply to the case when $X$ is not proper.
  \end{remark}

\section{The dominant Hom-conjecture}
\subsection{Cohomologically injective maps}
Recall that if $X$ is a connected qcqs scheme, then for any locally constant constructible sheaf $\mathcal{F}$ on $X$, there is always a functorial comparison map $H^*(\pi_1(X),\mathcal{F}_x) \rightarrow H^*(X,\mathcal{F}),$ where $\mathcal{F}_x$ is the stalk of $\mathcal{F}$ at some geometric point of $X$. 
\begin{construction}
Let $X$ be a scheme of finite type over a field $K$ of characteristic zero and let $\ell$ be a prime. We define the $\ell$-adic cohomology groups $H^*(X,\ZZ_{\ell})$ to be Jannsen's continuous étale cohomology groups \cite{JannsenContinuous}.  If $K$ is algebraically closed, $H^*(X,\ZZ_{\ell})$ agrees with the usual $\ell$-adic cohomology groups.  Further, by \cite[Proposition 5.2]{BhattScholzeProetale}, Jannsen's continuous étale cohomology groups agree with the corresponding pro-étale cohomology groups.

Similarly, for a profinite group $G$, we define $H^*(G,\ZZ_{\ell})$ to be the analogous cohomology groups as in \cite[Section 2]{JannsenContinuous}; they coincide with the cohomology of the complex $C^*(G,\ZZ_{\ell})$ of continuous cochains.  

Finally, we set $H^*(X,\QQ_{\ell}) = H^*(X,\ZZ_{\ell}) \otimes_{\ZZ_{\ell}} \QQ_{\ell}$, and $H^*(\pi_1(X),\QQ_{\ell}) = H^* (\pi_1(X),\ZZ_{\ell}) \otimes_{\ZZ_{\ell}} \QQ_{\ell}$. 
\end{construction}
\begin{definition} \label{def:cohdom}
Let $K$ be a field of characteristic zero, and let $X$ and $Y$ be varieties over $K$.  A continuous map $\varphi \colon \pi_1(X) \rightarrow \pi_1(Y)$ over $G_K$ is \emph{cohomologically injective} if it is open and the induced map \[ \varphi^* \colon H^*(\pi_1(Y_{\overline{K}}),\QQ_{\ell}) \rightarrow H^*(\pi_1(X_{\overline{K}}),\QQ_{\ell}) \] is injective for every prime number $\ell$.  We call $\varphi$ \emph{étale cohomologically injective} if it is open, and the induced map  \[ \varphi^* \colon H^*(\pi_1(Y_{\overline{K}}),\QQ_{\ell}) \rightarrow H^*(\pi_1(X_{\overline{K}}),\QQ_{\ell}) \rightarrow H^*(X_{\overline{K}},\QQ_{\ell}) \] is injective for every prime number $\ell$. We say that $\varphi$ is \emph{stably cohomologically injective} (stably étale cohomologically injective) if for every open subgroup $N \subset \pi_1(Y)$, the induced map $\varphi^{-1}(N) \rightarrow N$ is cohomologically injective (étale cohomologically injective).
\end{definition}
\begin{remark} \label{remark:open}
If one removes the openness condition from the definition of stably étale cohomologically injective, then the remaining condition forces openness of  $\pi_1(X) \rightarrow \pi_1(Y)$ whenever $Y$ is a $K(\pi,1)$ such that the Euler--Poincaré characteristic $\chi(Y,\QQ_{\ell})$ is nonzero\footnote{This was pointed out to us by Jakob Stix.}. Indeed, otherwise we could write $\im \varphi = \bigcap_i \pi_1(Y_i)$, where $Y_i \rightarrow  Y$ are finite étale covers whose degrees tend to $\infty$ as $i$ grows. Note that each $Y_i$ is also a $K(\pi,1)$, and that we have the equality 
\[ \chi( (Y_i)_{\overline{K}},\QQ_{\ell}) = (\pi_1(Y_{\overline{K}}) \colon \pi_1((Y_i)_{\overline{K}})) \cdot \chi(Y_{\overline{K}},\QQ_{\ell}) \] of Euler--Poincaré characteristics. Thus, the absolute value of $\chi(\pi_1((Y_i)_{\overline{K}}),\QQ_{\ell})$ goes to $\infty$ as $i$ grows. On the other hand, $\dim H^i(\pi_1((Y_i)_{\overline{K}}),\QQ_{\ell})$ is bounded by $\dim H^i(X_{\overline{K}},\QQ_{\ell})$, which is finite dimensional, and we obtain a contradiction. Thus $\varphi$ is open; note that the same reasoning applies to show that any stable cohomologically injective map $\pi_1(X) \rightarrow \pi_1(Y)$ is open as soon as $X$ and $Y$ are $K(\pi,1)$s and $\chi(Y,\QQ_{\ell}) \neq 0$.
\end{remark}
\begin{remark} \label{remark:composition}
Note that if $X$ is a $K(\pi,1)$, then if $Y \to X$ is a connected finite étale cover, then also $Y$ is a $K(\pi,1)$. Hence, when $X$ is a $K(\pi,1)$, a map is (stably) étale cohomologically injective if and only if it is (stably) cohomologically injective. Clearly, any composition of (stably) cohomologically injective maps, is (stably) cohomologically injective. Similarly, the class of (stably) étale cohomologically injective maps is closed under composition.
\end{remark}
For varieties $X$ and $Y$ over a field $K$ of characteristic zero, a map $\varphi \colon \Pi(X) \rightarrow \Pi(Y)$ of étale homotopy types over $BG_K$ is defined to be cohomologically injective if the induced map \[ \varphi \colon H^*(Y_{\overline{K}},\QQ_{\ell}) \rightarrow H^*(X_{\overline{K}},\QQ_{\ell}) \] is injective for all primes $\ell$, and the associated map $\varphi_* \colon \pi_1(X) \rightarrow \pi_1(Y)$ is open. It is \emph{stably cohomologically injective} if the map $\pi_1(X) \rightarrow \pi_1(Y)$ is open, and the following additional condition holds. For each $Y' \in Y_{\fet}$, corresponding to the open subgroup $H \subset \pi_1(Y)$, and $X'$ corresponding to $\varphi^{-1}(H) \subset \pi_1(X)$, the induced map $\varphi^* \colon H^*(Y'_{\overline{K}},\QQ_{\ell}) \rightarrow H^*(X'_{\overline{K}},\QQ_{\ell})$, defined from Diagram \ref{eq:diagramcovering}, is injective for every prime number $\ell$.  

For two varieties $X$ and $Y$ over $K$, we denote by \[ \Hom^{\cohdom}_{G_K}(\pi_1(X),\pi_1(Y)) \] the set of $\pi_1(Y_{\overline{K}})$-conjugacy classes of cohomologically injective maps; note that if one representative in the conjugacy class is cohomologically injective, so is any representative. Similarly, we denote \[ \Hom_{BG_K}^{\cohdom}(\Pi(X),\Pi(Y))\] the full sub-anima of $\Hom_{BG_K}(\Pi(X),\Pi(Y))$ spanned by cohomologically injective morphisms. The stable variants are denoted \[
\Hom^{\scohdom}_{G_K}(\pi_1(X),\pi_1(Y))
\quad\text{and}\quad
\Hom_{BG_K}^{\scohdom}(\Pi(X),\Pi(Y))
\] respectively. Lastly, the class of étale cohomologically injective, and stably étale cohomologically injective maps are denoted by $\Hom^{\ecohdom}_{G_K}(\pi_1(X),\pi_1(Y))$ and $\Hom^{\secohdom}_{G_K}(\pi_1(X),\pi_1(Y))$.
\begin{lemma}  \label{lem:kpi1cohdom}
Let $X$ and $Y$ be varieties over a field $K$ of characteristic zero, and assume that $Y$ is a $K(\pi,1)$. Then the isomorphism from Lemma \ref{lemma:mappingspaces} induces bijections
\[ \pi_0(\Hom^{\cohdom}_{BG_K}(\Pi(X),\Pi(Y))) \rightarrow \Hom^{\ecohdom}_{G_K}(\pi_1(X),\pi_1(Y)), \] and  
\[ \pi_0(\Hom^{\scohdom}_{BG_K}(\Pi(X),\Pi(Y))) \rightarrow \Hom^{\secohdom}_{G_K}(\pi_1(X),\pi_1(Y)), \]  respectively.
If $X$ is further a $K(\pi,1)$, the analogous bijections hold if one replaces $\Hom_{G_K}^{\ecohdom}$ and $\Hom_{G_K}^{\secohdom}$ by $\Hom_{G_K}^{\cohdom}$ and $\Hom^{\scohdom}_{G_K}$ respectively.
\end{lemma}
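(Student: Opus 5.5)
Since $Y$ is a $K(\pi,1)$, the natural map $\Pi(Y) \to B\pi_1(Y)$ is an equivalence, and compatibly $\Pi(Y_{\overline{K}}) \simeq B\pi_1(Y_{\overline{K}})$. I will apply Lemma \ref{lemma:mappingspaces} with $N = G_K$, $H = \pi_1(Y)$, $q \colon B\pi_1(Y) \to BG_K$ (surjective on $\pi_1$, since $Y$ is geometrically connected) and $p \colon \Pi(X) \to BG_K$. This identifies $\pi_0(\Map_{BG_K}(\Pi(X),\Pi(Y)))$ with $\Hom_{G_K}(\pi_1(X),\pi_1(Y))/\pi_1(Y_{\overline{K}})$, where $\ker(q)=\pi_1(Y_{\overline{K}})$. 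Under this identification a homotopy class $\varphi$ is sent to its induced map $\varphi_*$ on $\pi_1$. Openness is defined through $\varphi_*$ on both sides, so it is preserved automatically. Both sides are also unions of components, respectively of conjugacy classes, because all the conditions involved are invariant under homotopy and under conjugation. It therefore suffices to check that the cohomological conditions correspond.

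For the unstable statement, I will use the fiber sequence $\Pi(Y_{\overline{K}}) \to \Pi(Y) \to BG_K$ and the equivalence $\Pi(Y_{\overline{K}}) \simeq B\pi_1(Y_{\overline{K}})$. Together these show that the map $H^*(Y_{\overline{K}},\QQ_\ell) \to H^*(X_{\overline{K}},\QQ_\ell)$ induced by $\varphi$ factors as
\[ H^*(Y_{\overline{K}},\QQ_\ell) \cong H^*(\pi_1(Y_{\overline{K}}),\QQ_\ell) \xrightarrow{\varphi_*^*} H^*(\pi_1(X_{\overline{K}}),\QQ_\ell) \to H^*(X_{\overline{K}},\QQ_\ell). \]
This requires a naturality argument: the map on geometric homotopy types induced by $\varphi$, composed with $\Pi(Y_{\overline{K}}) \to B\pi_1(Y_{\overline{K}})$, agrees with $\Pi(X_{\overline{K}}) \to B\pi_1(X_{\overline{K}}) \to B\pi_1(Y_{\overline{K}})$, because both are determined by their effect on $\pi_1$ into a $1$-truncated target. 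One also has to pass from finite coefficients to $\ZZ_\ell$ and $\QQ_\ell$. For this I will use that the comparison isomorphisms for $\ZZ/\ell^m$ are compatible in $m$, and that Jannsen's continuous cohomology is the derived limit. The first map in the factorization is an isomorphism by the $K(\pi,1)$ property, and this uses the $\lim^1$-compatibility of the comparison. Hence $\varphi$ is cohomologically injective in the homotopy sense exactly when $\varphi_*$ is étale cohomologically injective.

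For the stable statement, I will take $Y'$ corresponding to an open subgroup $N \subset \pi_1(Y)$. The pullback square \eqref{eq:diagramcovering} gives $\Pi(X') \to \Pi(Y')$, with $X'$ corresponding to $\varphi_*^{-1}(N)$, and the map induced on $\pi_1$ is the restriction $\varphi_*^{-1}(N) \to N$. The cover $Y'$ is again a $K(\pi,1)$ by Remark \ref{remark:composition}, so the unstable argument applied to $X' \to Y'$ shows that the stable conditions match cover by cover. If $X$ is also a $K(\pi,1)$, then so is every $X'$, and the map $H^*(\pi_1(X'_{\overline{K}}),\QQ_\ell) \to H^*(X'_{\overline{K}},\QQ_\ell)$ is an isomorphism, so the étale and plain notions coincide, as in Remark \ref{remark:composition}. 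I expect the main obstacle to be the careful compatibility of the geometric-fiber maps and the $\ell$-adic comparison; the rest is formal.
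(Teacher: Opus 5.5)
Your proposal is correct and follows the paper's proof. Both identify homotopy classes with conjugacy classes via Lemma \ref{lemma:mappingspaces}, use that $\Pi(X)\to\Pi(Y)=B\pi_1(Y)$ factors through $B\pi_1(X)\to B\pi_1(Y)$ over $BG_K$ to match the cohomological conditions, handle the stable case via Diagram \ref{eq:diagramcovering} and the fact that finite étale covers of $K(\pi,1)$s are $K(\pi,1)$s, and get the last statement from the coincidence of the étale and plain notions when $X$ is a $K(\pi,1)$; you simply spell out the naturality of the geometric-fiber maps and the passage from $\ZZ/\ell^m$ to $\QQ_\ell$ coefficients, which the paper treats as immediate.
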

\begin{proof}
The first isomorphism follows immediately, since $\Pi(X) \rightarrow \Pi(Y) = B \pi_1(Y)$ over $BG_K$ factors over a morphism $B \pi_1(X) \rightarrow B \pi_1(Y)$ over $BG_K$. By applying $\pi_1$, we obtain a $\pi_1(Y_{\overline{K}})$-class of morphisms $\pi_1(X) \rightarrow \pi_1(Y)$ over $G_K$, and clearly if $\Pi(X) \rightarrow \Pi(Y)$ is cohomologically injective, $\pi_1(X) \rightarrow \pi_1(Y)$ is étale cohomologically injective by definition. The stable version follows by a similar argument, when we note that if $Y' \rightarrow Y$ is finite étale, then $Y'$ is also a $K(\pi,1)$, c.f. also Diagram \ref{eq:diagramcovering}.  The last statement follows immediately from this, since if $X$ is a $K(\pi,1)$,we have that $\Hom_{G_K}^{\cohdom} (\pi_1(X),\pi_1(Y)) = \Hom^{\ecohdom}_{G_K}(\pi_1(X),\pi_1(Y))$ and $\Hom^{\scohdom}_{G_K}(\pi_1(X),\pi_1(Y)) = \Hom^{\secohdom}_{G_K}(\pi_1(X),\pi_1(Y))$. 
\end{proof} 
\begin{lemma} \label{lemma:dominantcoh}
Let $X$ and $Y$ be proper smooth varieties over $K$. Then any dominant map $f \colon X \rightarrow Y$ over $K$ induces a stably cohomologically injective map \[ f_* \colon \Pi(X) \rightarrow \Pi(Y). \] If $Y$ is moreover a $K(\pi,1)$, then the map $f_* \colon \pi_1(X) \rightarrow \pi_1(Y)$ is stably étale cohomologically injective (a fortiori stably cohomologically injective).
\end{lemma}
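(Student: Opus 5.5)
The plan is to prove openness first, then injectivity for the base map $f$ itself, and finally to deduce the stable version by running the same argument on finite étale covers.

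\emph{Openness.} The map $f_* \colon \pi_1(X) \to \pi_1(Y)$ is open because $f$ is dominant and proper, hence surjective. By Stein factorization, $f$ factors as $X \to Y'' \to Y$, where $X \to Y''$ has geometrically connected fibers and $Y'' \to Y$ is finite surjective. Since $Y$ is normal (being smooth), the image of $\pi_1(X_{\overline{K}})$ in $\pi_1(Y_{\overline{K}})$ has finite index. Concretely, for a connected finite étale cover $Y' \to Y$, the base change $X \times_Y Y'$ has at most $\deg(Y''/Y)$ connected components. This bounds the index.

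\emph{Injectivity.} We work over $\overline{K}$ and may assume $K = \overline{K}$. For a dominant map $f \colon X \to Y$ of smooth proper varieties with $n = \dim Y$, the pullback $f^* \colon H^*(Y,\QQ_\ell) \to H^*(X,\QQ_\ell)$ is injective. The standard argument goes as follows. Choose an ample class $h$ on $X$ and let $d = \dim X - n$. Then $f_*(h^d \cdot f^*\alpha) = \alpha \cdot f_*(h^d)$ by the projection formula. Here $f_*(h^d) \in H^0(Y,\QQ_\ell) = \QQ_\ell$ equals the degree of $h^d$ restricted to a general fiber, which is positive. Hence $f_*(h^d \cup f^*(-))$ is a nonzero multiple of the identity, and $f^*$ is injective. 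This uses Poincaré duality and Gysin pushforward for smooth proper varieties in $\ell$-adic cohomology.

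\emph{Stable version.} Let $Y' \to Y$ be a connected finite étale cover corresponding to an open $H \subset \pi_1(Y)$. By Construction~\ref{construction:coveringspacse}, the pullback corresponds to $X' = X \times_Y Y'$. This $X'$ is smooth and proper, and it is connected exactly when it corresponds to $f_*^{-1}(H)$; in general the cover associated to $f_*^{-1}(H)$ is a connected component $X'_0$ of $X \times_Y Y'$. The component $X'_0 \to Y'$ is finite étale over $X$, hence smooth and proper, and it is dominant. Indeed, it surjects onto $X$, and its image is closed and irreducible of the same dimension as the image of $X$ after pulling back along the finite étale map, so it is all of $Y'$. Applying the injectivity step to $X'_0 \to Y'$ gives injectivity of $H^*(Y'_{\overline{K}},\QQ_\ell) \to H^*(X'_{0,\overline{K}},\QQ_\ell)$. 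This is the map in Diagram~\eqref{eq:diagramcovering}, so $f_* \colon \Pi(X) \to \Pi(Y)$ is stably cohomologically injective.

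\emph{The $K(\pi,1)$ case.} If $Y$ is a $K(\pi,1)$, then so is every $Y'$, and $H^*(\pi_1(Y'_{\overline{K}}),\QQ_\ell) = H^*(Y'_{\overline{K}},\QQ_\ell)$ via the comparison map. This requires passing from torsion coefficients to $\ZZ_\ell$ by taking limits, which is fine by the finiteness of the cohomology. The composite $H^*(\pi_1(Y'_{\overline{K}}),\QQ_\ell) \to H^*(\pi_1(X'_{\overline{K}}),\QQ_\ell) \to H^*(X'_{\overline{K}},\QQ_\ell)$ then agrees with $f^*$, so it is injective. This is the stably étale cohomologically injective condition. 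Injectivity of the composite also forces the first map to be injective, which gives the stably cohomologically injective condition (alternatively, use Lemma~\ref{lem:kpi1cohdom}).

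The main obstacle is checking the dominance of each component $X'_0 \to Y'$ and matching it correctly with the pullback in Construction~\ref{construction:coveringspacse}. The cohomological injectivity itself is standard.
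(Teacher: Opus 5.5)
Your proposal is correct and follows the paper's route: the paper cites Kleiman for injectivity of $f^*$, asserts openness on $\pi_1$, notes that dominance survives finite étale base change, and invokes Lemma~\ref{lem:kpi1cohdom}, while you supply the details it leaves implicit (Stein factorization for openness, and passing to the connected component of $X\times_Y Y'$ that corresponds to $f_*^{-1}(H)$). The one step to patch is that a proper smooth $X$ need not carry an ample class, so in the projection-formula argument replace $h^d$ by the cycle class of a closed subvariety $Z \subset X$ that is generically finite and surjective over $Y$ (or first reduce to projective $X$ via Chow's lemma and resolution of singularities).
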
	
\begin{proof}
It follows from \cite[Prop 1.2.4]{KleimanAlgebraic} that if $X$ and $Y$ are proper smooth varieties over $K$, then any dominant map $f \colon X \rightarrow Y$ induces an injective map $$f^* \colon H^*(Y_{\overline{K}},\QQ_{\ell}) \rightarrow H^*(X_{\overline{K}},\QQ_{\ell})$$ for all primes $\ell$. Since the induced map on $\pi_1$ is open, and the property of being dominant is stable under base-change, the statement for étale homotopy types follows.

The result when $Y$ is a $K(\pi,1)$ follows directly from the above and Lemma \ref{lem:kpi1cohdom}.
\end{proof}
Lemma \ref{lemma:dominantcoh} implies that if $X$ and $Y$ are proper and smooth varieties over $K$, then the natural map 
\[ \Hom^{\dom}_K(X,Y) \rightarrow \pi_0(\Map_{BG_K}(\Pi(X),\Pi(Y))) \] factors through $\pi_0(\Map^{\scohdom}_{BG_K}(\Pi(X),\Pi(Y)))$ (a fortiori, it factors through the cohomologically injective morphisms). If $Y$ is a $K(\pi,1)$, we have a map \[ \Hom^{\dom}_K(X,Y) \rightarrow \Hom_{G_K}^{\secohdom}(\pi_1(X),\pi_1(Y)).\]  In that case, a fortiori, we have a natural map \[ \Hom^{\dom}_K(X,Y) \rightarrow \Hom_{G_K}^{\scohdom}(\pi_1(X),\pi_1(Y)).\] 

Now we wish to make an analogous definition for not necessarily proper varieties. The correct notion should involve a ``group-theoretical'' version of compactly supported cohomology. However, as we cannot currently perform such a construction, we give the following ad-hoc definition over $p$-adic fields $K$. Given a smooth variety $X$ over a $p$-adic field $K$, pick an snc-compactification $X \subset \overline{X}$ and let $D$ be the divisor corresponding to $\overline{X} \setminus X$. Then it is well-known that $(H^n(X_{\overline{K}},\QQ_p) \otimes \CC_p(n))^{G_K}$ identifies with $H^0(\overline{X},\Omega^n_{\overline{X}/K}(D))$ (see \cite[Remark 1.2.6]{BhattHodgeTate}). We note that by the same argument as in Remark \ref{rmk:logbirational}, this latter group is independent of the given snc-compactification. We write $\HT(X) = \oplus_n H^0(\overline{X},\Omega^n_{\overline{X}/K}(D))$, and note that if $Y$ is a $K(\pi,1)$, any map $\varphi \colon \pi_1(X) \rightarrow \pi_1(Y)$ over $G_K$ induces a map $\HT(Y) \rightarrow \HT(X)$, by the Hodge--Tate decomposition.
\begin{definition} \label{def:hodgetateinjective}
Let $K$ be a $p$-adic field, and let $X$ and $Y$ be smooth varieties over $K$, and assume that $Y$ is a $K(\pi,1)$. A map $\varphi \colon \pi_1(X) \rightarrow \pi_1(Y)$ over $G_K$ is \emph{Hodge--Tate injective} if it is open, and the induced map $\HT(Y) \rightarrow \HT(X)$ is injective. We say that $\varphi$ is \emph{stably Hodge--Tate injective} if for each open subgroup $N \subset \pi_1(Y)$, the induced map $\varphi^{-1}(N) \rightarrow N$ is Hodge--Tate injective.
\end{definition}
\begin{remark}
The above definition is relatively group-theoretical relative to $G_K$, but not \emph{absolutely} group-theoretical, in sense that it involves $\CC_p$, which is not determined by $G_K$ as a profinite group.  One can however reconstruct $\CC_p$ from $G_K$ together with its ramification filtration (see \cite[Proposition 2.2]{MochizukiGrothendieckLocal}).
\end{remark}
A morphism $\Pi(X) \rightarrow \Pi(Y)$ of étale homotopy types over $BG_K$, with $X$ and $Y$ smooth varieties, is Hodge--Tate injective if it is open on $\pi_1$ and the analogous cohomological condition of Definition \ref{def:hodgetateinjective} is fulfilled; a stable version is defined analogously. If $X$ and $Y$ are smooth varieties over a $p$-adic field $K$, and $Y$ is in addition a $K(\pi,1)$, we claim that taking $\pi_1$ induces a map \[\Hom^{\dom}_K(X,Y) \rightarrow \Hom_{G_K}^{\HTinj}(\pi_1(X),\pi_1(Y)), \]  where the right-hand side consists of conjugacy classes of Hodge--Tate injective maps; a similar statement holds for étale homotopy types and stably Hodge--Tate injective maps. Indeed: given a dominant map $f \colon X \rightarrow Y$, extend it to a dominant map $\overline{f} \colon \overline{X} \rightarrow \overline{Y}$ between some smooth snc-compactifications. For any $n$, the induced map $(H^n(Y_{\overline{K}},\QQ_p) \otimes \CC_p(n))^{G_K} \rightarrow (H^n(X_{\overline{K}},\QQ_p) \otimes \CC_p(n))^{G_K}$ on Hodge--Tate weight $n$-subspaces corresponds to the pullback of (global sections) of the respective log-canonical bundles (see \cite[Remark 1.2.6]{BhattHodgeTate}) by $\overline{f}$, which is injective. This shows that any dominant map induces a (stably) Hodge--Tate injective map.

Note that over a $p$-adic local field, every (stably) étale cohomologically injective map is (stably) Hodge--Tate injective. 
\subsection{Proof of the dominant Hom-conjecture}
In this subsection we prove the (relative) dominant Hom-conjecture for proper hyperbolic polycurves over sub-$p$-adic fields. We also establish a variant over $p$-adic fields involving Hodge--Tate injectivity. As a consequence, we derive the (relative) Isom-conjecture for hyperbolic polycurves over sub-$p$-adic fields.
\begin{theorem} \label{thm:dominant}
  Let $K$ be a sub-$p$-adic field.
  \begin{enumerate}
  
  \item
  If $X$ is a smooth proper variety which is a $K(\pi,1)$, and $Y$ is a proper hyperbolic polycurve over $K$, then the natural map
  \[
  \Hom^{\dom}_K(X,Y)
  \longrightarrow
  \ \Hom^{\scohdom}_{G_K}(\pi_1(X),\pi_1(Y))
  \]
  is bijective. If $Y$ is in addition of dimension at most two, then the above bijectivity holds just under the assumption that $X$ is a smooth proper variety.
  \item
  Let $X$ be a smooth proper variety over $K$ and $Y$ a proper hyperbolic polycurve over $K$. Then the natural map
  \[
  \Hom^{\dom}_K(X,Y)
  \longrightarrow
  \ \pi_0(\Map^{\scohdom}_{BG_K}(\Pi(X),\Pi(Y)))
  \]
  is bijective. 
  \item
  If $K$ is a $p$-adic field, the above statements remain valid without assuming $X$ and $Y$ are proper, provided one replaces cohomologically injective maps with Hodge--Tate injective maps.
  \end{enumerate}
  \begin{remark}
One can obtain a more general version of Theorem \ref{thm:dominant} (iii). More precisely, if $K$ is any sub-$p$-adic field, one could define a map $\varphi \colon \pi_1(X) \rightarrow \pi_1(Y)$ between fundamental groups of smooth varieties over $G_K$ to be Hodge--Tate injective if it is so after base-change and specialization to a $p$-adic field (see the specialization argument in Proposition \ref{prop:hodgepoly} for details). Then, Theorem \ref{thm:dominant} (iii) can be strengthened, in that it allows one to say the same for an arbitrary sub-$p$-adic field $K$. However, since the definition of \emph{Hodge--Tate injective} is involved when $K$ is not a $p$-adic local field, we choose not to expand on this further and leave details to the interested reader.
  \end{remark}
\end{theorem}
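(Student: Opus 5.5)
The plan follows the outline in the introduction: reduce all three statements to showing that stably cohomologically (resp.\ Hodge--Tate) injective maps are \emph{nondegenerate} in Hoshi's sense. Then apply Hoshi's theorem that nondegenerate open homomorphisms $\pi_1(X)\to\pi_1(Y)$ over $G_K$ into a hyperbolic polycurve come from a unique dominant morphism $X\to Y$. Injectivity of the maps in (i)--(iii) should follow from the injectivity part of Hoshi's/Mochizuki's results, using Proposition~\ref{prop:homouter} and slimness of $\pi_1(Y_{\overline K})$ (Lemma~\ref{lem:pi1ofpoly}). Lemma~\ref{lemma:dominantcoh} and the discussion after Definition~\ref{def:hodgetateinjective} show that dominant maps land in the stated classes. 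For (ii), Proposition~\ref{prop:kpi1} gives that $Y$ is a $K(\pi,1)$, so Lemma~\ref{lem:kpi1cohdom} identifies $\pi_0(\Map^{\scohdom}_{BG_K}(\Pi(X),\Pi(Y)))$ with $\Hom^{\secohdom}_{G_K}(\pi_1(X),\pi_1(Y))$. Part (i) is then the special case where $X$ is also a $K(\pi,1)$, since in that case $\scohdom=\secohdom$ (Remark~\ref{remark:composition}). For $\dim Y\le 2$ without the $K(\pi,1)$ hypothesis on $X$, I would invoke Mochizuki's Hom-theorem for curves and its surface extension, where openness alone suffices for curves; for surfaces I would check that degeneracy through a curve is excluded by cohomological injectivity in degree $2$, which only needs $H^2(\pi_1(X_{\overline K}))\to H^2(X_{\overline K})$ injective, and this holds in general.

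The core step is over a $p$-adic local field $K$: a stably Hodge--Tate injective $\varphi$ is nondegenerate. Suppose $\varphi$ is degenerate, so there is a factorization through $g\colon U\to Z$ with $\dim Z<n=\dim Y$. Pass to the finite étale cover $Y'\to Y$ of Corollary~\ref{prop:polyhyperbolic}, which has a nonzero log-canonical form, and to the corresponding cover $X'$ with the preimage $U'$ of $U$. The factorization persists after replacing $Z$ by a suitable normalization in a cover, and $\varphi^{-1}$ of the subgroup remains Hodge--Tate injective. The map on weight-$n$ Hodge--Tate parts $H^0(\overline{Y'},\Omega^n(D))\to H^0(\overline{X'},\Omega^n(D))$ is then injective. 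It also factors through $H^n(\pi_1(Z_{\overline K}),\QQ_p)\otimes\CC_p(n)$, and I would argue via $H^n(Z_{\overline K})$ and functoriality of the Hodge--Tate decomposition (after resolving and compactifying $Z$) that the weight-$n$ part there vanishes, since $\dim Z<n$. Restricting to $U'$ is injective on log forms, which gives the contradiction. The delicate point is that $\pi_1(Z)$-cohomology need not be $Z$'s étale cohomology. I would handle this by using $\pi_1(U')\to\pi_1(Y')$ together with $Y'$ being $K(\pi,1)$: the class lives in $H^n(\pi_1(Y'))$, pulls back to $H^n(\pi_1(Z))$ and then to $H^n(U')$, and Hodge--Tate weight $n$ classes of $U'$ pulled back from $Z$ come from a map $U'\to Z$ of varieties. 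That forces vanishing through $H^n(Z_{\overline K})$ on weight $n$, possibly after shrinking $Z$ so that it is itself a $K(\pi,1)$ (e.g.\ an Artin neighbourhood).

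For a general sub-$p$-adic $K$, spread out: embed $K$ into a finitely generated extension of $\QQ_p$. Choose a model over a smooth base $S$ over which $X,Y,U,Z,\varphi$ and the factorization extend. Specialize to a closed point whose residue field is $p$-adic local, and compare cohomology of geometric fibres via Lemma~\ref{lemma:kpi1family} and diagram~\eqref{eq:diagram}, so that stable cohomological injectivity is preserved. Over a $p$-adic field, étale cohomological injectivity implies Hodge--Tate injectivity, so the local case yields a contradiction. I expect the main obstacle to be this specialization, in particular arranging that $\varphi$ itself (a group homomorphism, not a geometric map) specializes compatibly with the $G_K$-action and the degeneracy factorization. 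I would first try Hoshi's reduction, which lets one replace $\varphi$ by the induced outer Galois-equivariant map via Proposition~\ref{prop:homouter}, and use that Galois actions on geometric $\pi_1$ specialize.
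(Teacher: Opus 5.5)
\textbf{Gap: Hoshi's theorem needs poly-nondegeneracy, and you only prove nondegeneracy.}

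Most of your plan follows the paper. The Hodge--Tate weight-$n$ argument over a $p$-adic local field uses an étale cover with a nonzero log-canonical form. General sub-$p$-adic $K$ is handled by spreading out and specializing, using Proposition \ref{prop:homouter}. Part (ii) goes through Lemma \ref{lem:kpi1cohdom}, and the case $\dim Y\le 2$ uses injectivity of $H^2(\pi_1(X_{\overline{K}}),\QQ_\ell)\to H^2(X_{\overline{K}},\QQ_\ell)$.

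A side remark on your ``delicate point'': shrinking $Z$ to a $K(\pi,1)$ is unnecessary. By naturality of the comparison maps, $H^n(\pi_1(Y_{\overline{K}}))\to H^n(U_{\overline{K}})$ already factors through $H^n(\pi_1(Z_{\overline{K}}))\to H^n(Z_{\overline{K}})$ for any $Z$, and that is how the paper argues.

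The gap is where you apply Hoshi. You cite a theorem that ``nondegenerate open homomorphisms'' into a hyperbolic polycurve come from dominant morphisms. Theorem \ref{thm:hoshi} is instead a bijection onto \emph{poly}-nondegenerate classes. It requires a parameterizing sequence $Y=Y_n\to\cdots\to Y_1\to Y_0$ such that every composite $\pi_1(X)\to\pi_1(Y)\to\pi_1(Y_m)$ is nondegenerate. Your core step only shows that $\varphi$ itself is nondegenerate into $\pi_1(Y)$. That does not propagate down the tower: a factorization of $\pi_1(U)\to\pi_1(Y_m)$ through some $\pi_1(Z)$ with $\dim Z<m$ gives no evident factorization of $\pi_1(U)\to\pi_1(Y)$ through anything of dimension $<n$. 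So the hypothesis of the result you invoke is never verified.

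The missing step is the paper's Corollary \ref{cor:polynondeg}, and you can supply it with tools you already use:
\begin{enumerate}
\item $Y\to Y_m$ is smooth and surjective with geometrically connected fibres. Hence $\pi_1(Y)\to\pi_1(Y_m)$ is surjective, and the pullback to any open subgroup of $\pi_1(Y_m)$ is again induced by a dominant map of connected covers.
\item Therefore $\pi_1(Y)\to\pi_1(Y_m)$ is stably Hodge--Tate injective (discussion after Definition \ref{def:hodgetateinjective}). In the proper case it is stably cohomologically injective by Lemma \ref{lemma:dominantcoh}.
\item These classes are closed under composition (Remark \ref{remark:composition}), so each composite $\pi_1(X)\to\pi_1(Y_m)$ is again stably injective in the relevant sense.
\item Each $Y_m$ is itself a hyperbolic polycurve, hence a $K(\pi,1)$ with a finite étale cover carrying nonzero log-canonical forms (Corollary \ref{prop:polyhyperbolic}).
\item Your nondegeneracy argument, including the specialization step, then applies to every composite $\pi_1(X)\to\pi_1(Y_m)$, giving poly-nondegeneracy.
\end{enumerate}

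For $\dim Y\le 2$ your argument already suffices. Nondegeneracy into the base curve $Y_1$ is automatic from openness, and the top level is exactly what your degree-$2$ argument handles.
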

\begin{theorem} \label{cor:isom}
Let $X$ and $Y$ be hyperbolic polycurves over a sub-$p$-adic field $K$. Then the natural map $\Isom_K(X,Y) \rightarrow \Isom_{G_K}(\pi_1(X),\pi_1(Y))$ is bijective.
\end{theorem}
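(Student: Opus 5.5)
The plan is to deduce Theorem \ref{cor:isom} from the non-proper version of the dominant Hom statement, in the Hodge--Tate form of Theorem \ref{thm:dominant}(iii), together with the nondegeneracy machinery sketched in the outline and Hoshi's characterization of dominant maps. Injectivity should be formal. Two $K$-isomorphisms $f,g\colon X\to Y$ inducing the same conjugacy class of maps $\pi_1(X)\to\pi_1(Y)$ are in particular dominant maps inducing the same open homomorphism. I expect the injectivity half of Hoshi's result (via Mochizuki's Hom-theorem for the base curves and induction along parameterizing morphisms) to show that such dominant maps into a hyperbolic polycurve agree, giving $f=g$. Alternatively, one can reduce to the $p$-adic case and use the injectivity in Theorem \ref{thm:dominant}(iii).

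For surjectivity, let $\varphi\colon\pi_1(X)\xrightarrow{\sim}\pi_1(Y)$ be an isomorphism over $G_K$. It is open, and for every open $N\subset\pi_1(Y)$ the map $\varphi^{-1}(N)\to N$ is again an isomorphism. Since $X$, $Y$ and all their finite étale covers are $K(\pi,1)$'s by Proposition \ref{prop:kpi1} and Remark \ref{rmk:etalecoverpoly}, the map $\varphi$ is stably étale cohomologically injective; in fact it induces isomorphisms on all $\ell$-adic cohomology of geometric fibres of corresponding covers. The key step is to show that $\varphi$ is nondegenerate in Hoshi's sense. For this I would follow the outline.
\begin{enumerate}
\item Spread out and specialize so as to reduce to $K$ a $p$-adic local field. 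Use Lemma \ref{lemma:kpi1family} to compare homotopy types and cohomology of fibres, and the sub-$p$-adic assumption to embed into a finitely generated extension of $\QQ_p$.
\item Replace $Y$ by a finite étale cover from Corollary \ref{prop:polyhyperbolic} so that $Y$, and all its further covers, carry a nonzero log-canonical form. Replace $X$ by the corresponding cover $\varphi^{-1}$.
\item Over the $p$-adic field, the isomorphism $H^n(Y_{\overline K},\QQ_p)\to H^n(X_{\overline K},\QQ_p)$ with $n=\dim Y=\dim X$ is $G_K$-equivariant. Hence it is injective on Hodge--Tate weight $n$ parts, and so $\HT(Y)\to\HT(X)$ is injective on top log-forms.
\item Suppose there were a degeneration $U\to Z$ with $\dim Z<n$ through which $\pi_1(U)\to\pi_1(Y)$ factors. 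By functoriality of the Hodge--Tate decomposition, the weight $n$ map would factor through $\HT(Z)$ in degree $n$, which vanishes since $\dim Z<n$. Restriction from $X$ to the open $U$ is injective on log-forms, so this contradicts the existence of a nonzero log-canonical form.
\end{enumerate}

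With nondegeneracy established, Hoshi's characterization produces a dominant morphism $f\colon X\to Y$ with $f_*=\varphi$ up to conjugacy. Apply the same argument to $\varphi^{-1}$ to obtain $g\colon Y\to X$ dominant with $g_*=\varphi^{-1}$. Then $(g\circ f)_*$ is the identity, and by the injectivity part $g\circ f=\mathrm{id}_X$; similarly $f\circ g=\mathrm{id}_Y$. Hence $f$ is an isomorphism, which gives surjectivity.

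The main obstacle is the specialization step in (1). One must choose a model over a finitely generated base, take a point of good (log-smooth, snc) reduction over a $p$-adic field, and check two things: that the specialized homomorphism is still an isomorphism, or at least stably cohomologically injective, and that a degeneration over $K$ would specialize to a degeneration over the $p$-adic field. I would handle this by spreading out the degeneration datum $U\to Z$ together with the parameterizing morphisms, and using Lemma \ref{lemma:kpi1family} and Diagram \eqref{eq:diagram} to transport the cohomological statements across the specialization equivalence.
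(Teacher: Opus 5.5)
Your route is the paper's: an isomorphism $\varphi$ is stably étale cohomologically injective, so a Hodge--Tate weight argument (using a nonzero log-canonical form, after specializing to a $p$-adic field) rules out degeneracy. Hoshi's Theorem \ref{thm:hoshi} is then applied to $\varphi$ and $\varphi^{-1}$, and injectivity finishes. The gap is at the point where you invoke Hoshi. Theorem \ref{thm:hoshi} characterizes dominant maps by \emph{poly}-nondegenerate homomorphisms. That is, you need a sequence of parameterizing morphisms $Y=Y_n\to\cdots\to Y_1\to\Spec K$ such that \emph{every} composite $\pi_1(X)\to\pi_1(Y)\to\pi_1(Y_m)$, $1\le m\le n$, is nondegenerate.

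Your steps (2)--(4) only treat $m=n$, and nondegeneracy of $\varphi$ does not formally give the other levels. A factorization of $\pi_1(U)\to\pi_1(Y_m)$ through $\pi_1(Z)$ with $\dim Z<m$ does not yield a factorization of $\pi_1(U)\to\pi_1(Y)$ through something of dimension $<n$. You only have a homomorphism $\pi_1(Z)\to\pi_1(Y_m)$, not a morphism $Z\to Y_m$ along which to form a fibre product.

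To close the gap, run the Hodge--Tate argument at every level of the tower, as in Corollary \ref{cor:polynondeg}. This needs two additional inputs.
\begin{enumerate}
\item After a finite étale cover, choose a parameterizing sequence in which every fibre has genus $\ge 2$. Then Proposition \ref{prop:polycompactification} gives each $Y_m$, not just $Y$, a nonzero log-canonical form. Corollary \ref{prop:polyhyperbolic}, as you cite it, only provides this for $Y$ and its covers.
\item The composite $\pi_1(X)\to\pi_1(Y_m)$ is no longer an isomorphism, so you cannot argue via an isomorphism on cohomology. Instead, use that $Y\to Y_m$ is dominant. Hence pullback of log $m$-forms $\HT(Y_m)\to\HT(Y)$ is injective, also after passing to finite étale covers. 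So $\pi_1(X)\to\pi_1(Y)\to\pi_1(Y_m)$ is stably Hodge--Tate injective, and your steps (3)--(4) apply with $m$ in place of $n$.
\end{enumerate}
Correspondingly, the specialization in your step (1) must spread out the entire tower $\mathcal{Y}_n\to\cdots\to\mathcal{Y}_1\to V$, not only $Y$ and the degeneration datum $U\to Z$. With these additions your argument becomes the paper's proof.
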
 
\begin{remark}
As in \cite[Theorem 6.1]{SchmidtStix}, the results of Pop \cite{PopGrothendieck,PopAlterations} on birational anabelian geometry allow one to derive an \emph{absolute} version of the Isom-conjecture in certain situations. More precisely, let $X$ and $Y$ be hyperbolic polycurves defined over possibly different fields which are finitely generated over $\QQ$.  Then the natural map $\Isom(X,Y) \rightarrow \Isom(\pi_1(X),\pi_1(Y))$ is bijective. See the proof of \cite[Theorem 6.1]{SchmidtStix} for details.
\end{remark}
Grothendieck conjectured that for a hyperbolic polycurve $Y$, and any smooth variety $X$, the natural map \[ \Hom^{\dom}_K(X,Y) \rightarrow \Hom^{\open}_{G_K}(\pi_1(X),\pi_1(Y)) \] is bijective. The following example shows that this fails in $\dim Y > 1$. It further shows that, as soon as $\dim Y >2$, any subcategory of the category of smooth projective varieties for which the image of $\Hom^{\dom}_K(X,Y)$ inside $\Hom_{G_K}^{\open}(\pi_1(X),\pi_1(Y))$ can be characterized group-theoretically must be highly restricted.
\begin{example}\label{ex:grothendieckcounter}
Let $K$ be a field of characteristic zero, and let $Y$ be a proper hyperbolic polycurve of dimension greater than one over $K$. For any smooth hyperplane section $i \colon H  \subset Y$ over $K$, Lefschetz hyperplane theorem shows that we have a natural surjection $i_* \colon \pi_1(H) \rightarrow \pi_1(Y)$ over $G_K$. We thus see that $\Hom^{\open}_{G_K}(\pi_1(H),\pi_1(Y))$ is non-empty, containing the map $i_*$, but $\Hom^{\dom}_K(H,Y) = \emptyset$ since $\dim H < \dim Y$.

Further, if $\dim Y \geq 3$, by iteratively taking smooth hyperplane sections,  one sees that one cannot in general characterize the dominant morphisms in a group-theoretical manner inside \[ \Hom_{G_K}^{\open}(\pi_1(X),\pi_1(Y))\]  for $X$ smooth and projective.

In fact, one cannot in general even distinguish dominant morphisms in the category of smooth proper varieties admitting a dominant map onto $Y$. To see this, let $K$ be a sub-$p$-adic field and let $i \colon H \subset Y$ be any smooth hyperplane section over $K$. Since $\dim(Y) \geq 3$, the map $i_* \colon \pi_1(H) \rightarrow \pi_1(Y)$ is an isomorphism. Set $X = Y \times_K H$, then clearly $X$ admits a dominant map onto $Y$. Further, $\pi_1(X) \cong \pi_1(Y) \times_{G_K} \pi_1(H)$. The projections $\pr_1 \colon \pi_1(X) \rightarrow \pi_1(Y)$ and \[ i_* \circ  \pr_2 \colon \pi_1(X) \rightarrow \pi_1(H) \rightarrow \pi_1(Y) \] are isomorphic by the automorphism $\pi_1(X) \rightarrow \pi_1(X)$ taking $(s,t) \in \pi_1(X)$ to $(i_*(t),i^{-1}_*(s))$. However, the map $i_* \circ (\pr_2)_* \colon \pi_1(X) \rightarrow \pi_1(Y)$ is \emph{not} induced by a dominant morphism of schemes.  Indeed, it is the image of the map $X \rightarrow H \rightarrow Y$, and since $\Hom_K(X,Y) \rightarrow \Hom_{G_K}(\pi_1(X),\pi_1(Y))$ is injective \cite[Proposition 3.2]{HoshiGrothendieck}, our claim follows. Thus, even though $\pr_1$ is induced by a dominant morphism, the isomorphic map $i \circ \pr_2$ is not.  \end{example}

To prove our main theorems, we recall the following definition, due to Hoshi \cite[Def.~3.6]{HoshiGrothendieck}.
\begin{definition} \label{def:polynondeg}
Let $X$ and $Y$ be normal varieties over a field $K$ of characteristic zero. 
\begin{enumerate}  
  \item An open map $\varphi \colon \pi_1(X) \rightarrow \pi_1(Y)$ over $G_K$ is nondegenerate if the following condition holds: for any non-empty Zariski open $U \subset X$ and smooth, surjective, geometrically connected map $g \colon U \rightarrow Z$ over $K$ with $Z$ normal and $\dim(Z) < \dim(Y)$, the induced map $\pi_1(U) \rightarrow \pi_1(Y)$ does not factor through $\pi_1(U) \rightarrow \pi_1(Z)$. 
  \item If $Y$ is a polycurve of dimension $n$, then an open map $\varphi \colon \pi_1(X) \rightarrow \pi_1(Y)$ is poly-nondegenerate if there is a sequence of parameterizing maps \[ Y = Y_n \rightarrow Y_{n-1} \rightarrow \cdots \rightarrow Y_1 \rightarrow Y_0 = \Spec K \] such that for each $0 \leq i \leq n$, the composite $\pi_1(X) \rightarrow \pi_1(Y_i)$ is nondegenerate. 
\end{enumerate}
\end{definition}

\begin{proposition} \label{prop:hodgepoly}
Let $K$ be a sub-$p$-adic field, and $X$ and $Y$ smooth varieties over $K$, and assume that $Y$ is a $K(\pi,1)$. Assume that there is a connected finite étale cover $Y' \rightarrow Y$ such that $H^0(\overline{Y'},\omega_{\overline{Y'}}(D')) \neq 0$ for some snc-compactification $Y' \subset \overline{Y'}$ with boundary divisor $D'$. 
\begin{enumerate}
\item If $K$ is a $p$-adic local field, every stably Hodge--Tate injective map \[ \varphi \colon \pi_1(X) \rightarrow \pi_1(Y) \] is nondegenerate.
\item Assume that $\pi_1(Y_{\overline{K}})$ is slim. Then every stably étale cohomologically injective map \[ \varphi \colon \pi_1(X) \rightarrow \pi_1(Y) \] is nondegenerate. 
\end{enumerate} 
If $Y$ admits a nonzero log-canonical form, the above conclusions hold for étale cohomologically injective and Hodge--Tate injective maps, respectively.
\end{proposition}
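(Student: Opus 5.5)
We argue by contradiction, following the outline in the introduction. Suppose $\varphi$ is degenerate. Then there are a nonempty open $U \subset X$ and a smooth, surjective, geometrically connected $g \colon U \to Z$ with $Z$ normal, $\dim Z < n := \dim Y$, and a factorization $\pi_1(U) \to \pi_1(Z) \xrightarrow{\psi} \pi_1(Y)$ of $\pi_1(U) \to \pi_1(X) \to \pi_1(Y)$.

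We first reduce to the case where $Y$ itself has a nonzero log-canonical form. Let $N = \pi_1(Y') \subset \pi_1(Y)$ and let $X'$ correspond to $\varphi^{-1}(N)$. The restricted map $\varphi^{-1}(N) \to N$ is again (stably) Hodge--Tate, resp. étale cohomologically, injective. Put $U' = U \times_X X'$; it corresponds to the preimage of $N$ in $\pi_1(U)$. Since the factorization passes through $\pi_1(Z)$, the open subgroup $\psi^{-1}(N) \subset \pi_1(Z)$ defines a finite étale cover $Z' \to Z$. Because $\pi_1(U) \to \pi_1(Z)$ is surjective ($g$ has geometrically connected fibres), $U' = U \times_Z Z'$ is connected. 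The map $U' \to Z'$ is smooth, surjective and geometrically connected, with $Z'$ normal, and $\pi_1(U') \to \pi_1(Y')$ factors through $\pi_1(Z')$. So $\varphi' \colon \pi_1(X') \to \pi_1(Y')$ is degenerate and injective in the unstable sense. This also handles the final sentence of the statement, where no cover is needed.

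\emph{Case (i), $K$ $p$-adic local.} We may replace $K$ by a finite extension, since $\HT$ behaves well under finite base change and nondegeneracy descends. Fix $0 \ne \omega \in H^0(\overline{Y},\omega_{\overline Y}(D)) \subset \HT(Y)$, which lies in degree $n$. Restriction from $X$ to the dense open $U$ is injective on log forms, because a log form vanishing on a dense open vanishes. Hence $\varphi^*\omega$ restricts to a nonzero element of $H^0(\overline{U},\Omega^n(D_U))$, i.e. a nonzero element of the Hodge--Tate weight $n$ part of $H^n(U_{\overline K},\QQ_p)\otimes\CC_p$. On the other hand, this class comes from $H^n(\pi_1(Y_{\overline K}),\QQ_p)\to H^n(\pi_1(Z_{\overline K}),\QQ_p)\to H^n(Z_{\overline K},\QQ_p)\to H^n(U_{\overline K},\QQ_p)$. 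This chain is $G_K$-equivariant, and the middle comparison map is functorial; note that $Z$ need not be a $K(\pi,1)$, but we only need the map from group cohomology. Now $\dim Z \le n-1$, and a weight-$n$ Hodge--Tate part of $H^n(Z_{\overline K})\otimes\CC_p$ with $n > \dim Z$ vanishes. To see this, replace $Z$ by a smooth dense open $Z^\circ$, shrinking $U$ to $g^{-1}(Z^\circ)$ (we check that $U$ remains nonempty). By the Hodge--Tate decomposition for smooth varieties (\cite[Remark 1.2.6]{BhattHodgeTate}), the weight-$n$ part of $H^n(Z^\circ_{\overline K},\QQ_p)\otimes\CC_p$ is $H^0(\overline{Z^\circ},\Omega^n(\log))$, which is $0$ for $n>\dim Z$. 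Taking $G_K$-invariants of the weight-$n$ twist is functorial, so the image of $\omega$ in $\HT(U)$ is zero. This is a contradiction.

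\emph{Case (ii), general sub-$p$-adic $K$.} We spread out and specialize. Choose a finitely generated subfield over which $X, Y, Y', U, Z, g, \varphi$ and the factorization are defined; by the outer description (Proposition \ref{prop:homouter}, using slimness) this is legitimate. Then embed it into a finite extension of $\QQ_p$; the embedding $K\subset$ field finitely generated over $\QQ_p$ reduces to this after a standard specialization of the transcendental part. Base change to $p$-adic $L$ preserves $\pi_1(Y_{\overline K})$ and the group cohomology with $\QQ_\ell$ coefficients (Lemma \ref{lemma:kpi1family} and diagram \eqref{eq:diagram}), and étale cohomology of $X_{\overline K}$ is invariant under change of algebraically closed field. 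Hence étale cohomological injectivity survives, and over $L$ it implies Hodge--Tate injectivity, taking $\ell = p$ and the comparison $H^*(\pi_1)\to H^*(X_{\overline L})$. The degeneracy data also base-change. Case (i) then gives the contradiction.

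The main obstacle is the specialization in (ii), particularly for finitely generated extensions of $\QQ_p$ with positive transcendence degree. There one must specialize along a smooth model where $X, Y, U \to Z$ have good log-smooth behaviour, using Lemma \ref{lemma:kpi1family} to keep $\pi_1$ and cohomology unchanged on geometric fibres, and slimness to lift the outer maps back to maps over the Galois group.
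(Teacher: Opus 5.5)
Your reduction to the case where $Y$ itself has a nonzero log-canonical form agrees with the paper, and you do it more carefully by building $Z'\to Z$ from $\psi^{-1}(N)$. Your Case (i) also agrees with the paper's argument. The gap is in Case (ii).

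\textbf{The descent step fails.} You write ``choose a finitely generated subfield over which $\varphi$ and the factorization are defined; by Proposition \ref{prop:homouter} this is legitimate.'' This is not legitimate.
\begin{itemize}
\item Proposition \ref{prop:homouter} identifies $\varphi$ with an outer homomorphism $\pi_1(X_{\overline K})\to\pi_1(Y_{\overline K})$ that commutes with the outer $G_K$-action.
\item Descending to $F\subset K$ would require commuting with the outer action of $G_F$, or of an open subgroup of it. But the image of $G_K\to G_F$ is only $\Gal(\overline F/\overline F\cap K)$, which typically has infinite index.
\item Since $\varphi$ is not known to be geometric (that is essentially what is being proved), you cannot enlarge $F$ to a field of definition of $\varphi$.
\item An abstract embedding $F\hookrightarrow L$ into a $p$-adic local field is unrelated to $K$. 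It gives no way to turn a map over $G_K$ into a map over $G_L$.
\end{itemize}
The correct move, as in the paper, is to go up. Restrict along $G_{K_1}\to G_K$ for some $K\subset K_1$ with $K_1$ finitely generated over $\QQ_p$; this preserves both degeneracy and étale cohomological injectivity. Your embedding trick does settle the case where $K$ itself embeds into a $p$-adic local field, for example $K$ finitely generated over $\QQ$. It does not cover, say, $K=\QQ_p(t)$, which admits no such embedding. So a genuine specialization step cannot be avoided.

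\textbf{The specialization is missing.} That specialization is the whole content of (ii), and you only name it as ``the main obstacle.'' What is missing is a mechanism producing a degenerate, étale cohomologically injective map over the Galois group of a $p$-adic local field. The paper proceeds as follows.
\begin{itemize}
\item Write $K_1=L(V)$ with $L$ a $p$-adic local field, and spread out $U\subset X$, $Y$ and $g\colon U\to Z$ over $V$ with relative snc compactifications.
\item Choose a henselian DVR $W\to V$, dominant, sending the closed point to a closed point $v$ with the same residue field $k(v)$, and pull back along $G_{K(W)}\to G_{K_1}$.
\item By good reduction and residue characteristic zero, the specialization maps on geometric fundamental groups are isomorphisms. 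The outer $G_{K(W)}$-actions factor through the surjection $G_{K(W)}\to G_{k(v)}$.
\item Hence the outer maps underlying $\varphi$ and $\psi$ are $G_{k(v)}$-equivariant. Slimness, via Proposition \ref{prop:homouter}, turns them into a degenerate factorization over $G_{k(v)}$.
\item Lemma \ref{lemma:kpi1family} and Diagram \eqref{eq:diagram} show that $Y_v$ is still a $K(\pi,1)$ and the map is still étale cohomologically injective.
\item After shrinking $V$, the nonzero log form survives on the fibre, so (i) applies and gives the contradiction.
\end{itemize}
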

\begin{proof}
We first prove (1). Note that a map is nondegenerate if it is so after passing to a finite étale cover of $Y$. Indeed, given $\varphi \colon \pi_1(X) \rightarrow \pi_1(Y)$, and an étale cover  $Y' \rightarrow Y$ we obtain a map $\pi_1(X') \rightarrow \pi_1(Y')$ where $X' \rightarrow X$ is the connected finite étale cover corresponding to $\varphi^{-1}(H)$. Any factorization as in Definition \ref{def:polynondeg} of $\pi_1(U) \rightarrow \pi_1(X) \rightarrow \pi_1(Y)$ for some Zariski open $U \subset X$, would give a factorization of $\pi_1(U') \rightarrow \pi_1(X') \rightarrow \pi_1(Y')$, where $U' = U \times_X X'$. Thus, we may assume that $H^0(\overline{Y},\Omega^n_{\overline{Y}/K}(D)) \neq 0$ for some snc-compactification $Y \subset \overline{Y}$ with boundary $D$. 
Let $\varphi \colon \pi_1(X) \rightarrow \pi_1(Y)$  be a Hodge--Tate injective map. Suppose, for contradiction, that there exists a Zariski open $U \subset X$, a normal scheme $Z$ with $\dim Z < \dim Y = n$, and a smooth, surjective, geometrically connected map $g \colon U \rightarrow Z$ such that the induced map $\varphi \colon \pi_1(U) \rightarrow \pi_1(Y)$ factors through $g_* \colon \pi_1(U) \rightarrow \pi_1(Z)$. Since $Y$ is a $K(\pi,1)$, we obtain an induced map \[ H^*(Y_{\overline{K}},\QQ_p) \rightarrow H^*(U_{\overline{K}},\QQ_p), \] which factors over $H^*(Y_{\overline{K}},\QQ_p) \rightarrow H^*(Z_{\overline{K}},\QQ_p)$ as $G_K$-modules by the factorization on fundamental groups.

Choose compatible snc-compactifications $\overline{X}, \overline{Y}, \overline{Z},\overline{U},$ of $X,Y,Z$ and $U$, and write $D_X,D_Y,D_Z$ and $D_U$ for the corresponding snc divisors. By functoriality of the Hodge--Tate decomposition, the map \[ H^n(Y_{\overline{K}},\QQ_p) \rightarrow H^n(X_{\overline{K}},\QQ_p) \rightarrow H^n(U_{\overline{K}},\QQ_p) \] induces a map \[ H^0(\overline{Y},\Omega^n_{\overline{Y}/K}(D_Y)) \rightarrow H^0(\overline{X},\Omega^n_{\overline{X}/K}(D_X)) \rightarrow  H^0(\overline{U},\Omega^n_{\overline{U}/K}(D_U)). \]  However, by functoriality, this composite factors through $(H^n(Z_{\overline{K}},\QQ_p) \otimes \CC_p(n))^{G_K}$. This subspace is zero, since $\dim(Z) < n$, and $Z$ is smooth. On the other hand, the map $H^0(\overline{Y},\Omega^n_{\overline{Y}/K}(D_Y)) \rightarrow H^0(\overline{U},\Omega^n_{\overline{U}/K}(D_U))$ is injective. Indeed, by assumption, $H^0(\overline{Y},\Omega^n_{\overline{Y}/K}(D_Y)) \rightarrow H^0(\overline{X},\Omega^n_{\overline{X}/K}(D_X))$ is injective. Further, $H^0(\overline{X},\Omega^n_{\overline{X}/K}(D_X)) \rightarrow  H^0(\overline{U},\Omega^n_{\overline{U}/K}(D_U))$ is injective: the map $\overline{U} \rightarrow \overline{X}$ is birational. Since any stably étale cohomologically injective map over a $p$-adic local field is clearly stably Hodge--Tate injective, (2) is proven when $K$ is a $p$-adic field. The above argument also shows that if $Y$ has a nonzero log-canonical form, then we only need to assume that $\varphi$ is étale cohomologically injective.

We now prove (2) for a general sub-$p$-adic field $K$. We claim that if the proposition holds over a field $K$, it is known for all subfields of $K$. Indeed, we can first as in the proof of (1) assume that $H^0(\overline{Y},\Omega^n_{\overline{Y}/K}(D)) \neq 0$. Note that the property of being a $K(\pi,1)$, as well as the existence of a nonzero log-canonical form, is stable under base-change along field extensions in characteristic zero. Thus, given a factorization as in Definition \ref{def:polynondeg}, since also fundamental groups commute with base-change along field extensions in characteristic zero, and the base-change of the map $\pi_1(X) \rightarrow \pi_1(Y)$ is still étale cohomologically injective, we obtain a contradiction. Thus, the statement is known for all fields which are subfields of a $p$-adic local field. A general sub-$p$-adic field $K$ is by definition a subfield of a finitely generated extension of $\QQ_p$ of transcendence degree $d \geq 0$. To prove our statement, we can assume that $K$ is a field which is finitely generated over $\QQ_p$, since the statement then holds for all subfields of $K$. We can find a subfield $L \subset K$ such that $L$ is a $p$-adic local field, and $K$ is the function field of a variety $V$ over $L$. We can once again assume that $H^0(\overline{Y},\Omega^n_{\overline{Y}/K}(D)) \neq 0$ for some snc-compactification $Y \subset \overline{Y}$ with boundary $D$. Let now $\varphi$ be étale cohomologically injective and suppose that it is degenerate. Then there is a non-empty Zariski open $U \subset X$, and a smooth, surjective, geometrically connected map $U \rightarrow Z$ with $\dim(Z) < \dim(Y)$ such that the induced map $\pi_1(U) \rightarrow \pi_1(Y)$ factors through $\pi_1(U) \rightarrow \pi_1(Z).$  Choose snc-compactifications $\overline{U},\overline{X},\overline{Y},\overline{Z}$ of $U,X,Y,Z$ respectively. After possibly shrinking $V$, we can arrange that there exist smooth models $\mathcal{U},\mathcal{X},\mathcal{Y},\mathcal{Z}$ over $V$ of $U,X,Y,Z$, and relative snc-compactifications  $\overline{\mathcal{U}},\overline{\mathcal{X}},\overline{\mathcal{Y}},\overline{\mathcal{Z}}$ respectively, which are smooth over $V$; denote the corresponding relative divisors $\mathcal{D}_U,\mathcal{D}_X,\mathcal{D}_Y,\mathcal{D}_Z$. We can further arrange so that there is an open immersion $\mathcal{U} \rightarrow \mathcal{X}$ over $V$, which on the generic fiber recovers the open immersion $U \rightarrow X$. Similarly, we can assume that there is a map $\mathcal{U} \rightarrow \mathcal{Z}$ over $V$ which is smooth, surjective and geometrically connected, and on the generic fiber is isomorphic to $U \rightarrow Z$. By possibly shrinking further, we may assume that $H^0(\overline{\mathcal{Y}},\Omega^n_{\overline{\mathcal{Y}}/V}(\mathcal{D}_Y)) \neq 0$. Let $v \in V$ be a closed point with the property that $H^0(\overline{\mathcal{Y}}_v,\Omega^n_{\overline{\mathcal{Y}}_v/\Spec k(v)}(\mathcal{D}_v)) \neq 0$, where $\overline{\mathcal{Y}}_v$ is the fiber at $v$. There is a henselian discrete valuation ring $\Spec W$, with fraction field $\eta = \Spec K(W)$ and closed point $w$, and a dominant map $W \rightarrow V$ over $L$ which has the property that the closed point of $W$ goes to $v$, and induces an isomorphism on residue fields. We now pull-back the given factorization along $G_{K(W)} \rightarrow G_K$, and all geometric objects along the map $W \rightarrow V$. Let $\Spec \overline{K(W)} = \overline{\eta} \rightarrow W$ be a geometric point lying over the generic point, and $\overline{s} \rightarrow W$ be a geometric point lying over the closed point, chosen so that $\overline{s}$ is an étale specialization of $\overline{\eta}$. Note that the geometric objects pulled back to $W$ are smooth, and admit relative snc-compactifications by construction. Since the residue field is of characteristic zero, the specialization map $\pi_1(Y_{\overline{\eta}}) \rightarrow \pi_1(Y_{\overline{s}})$ is an isomorphism. Note further that the outer action of $G_{K(W)}$ on the fundamental groups of all objects appearing in the factorization of $\pi_1(U_{K(W)}) \rightarrow \pi_1(Y_{K(W)})$ factors through $G_s = \Gal(\overline{k(s)}/k(s))$. We now apply Proposition \ref{prop:homouter}, to specialize the given factorization of  $\pi_1(U_{K(W)}) \rightarrow \pi_1(Y_{K(W)})$ through $\pi_1(U_{K(W)}) \rightarrow \pi_1(Z_{K(W)})$ to an analogous factorization of $\pi_1(U_{k(s)}) \rightarrow \pi_1(Y_{k(s)})$ over $G_{k(s)}$.  Lemma \ref{lemma:kpi1family},  and the subsequent discussion implies that $Y_s$ is a $K(\pi,1)$. Further, Diagram \ref{eq:diagram} and the fact that $H^*(X_{\overline{s}},\QQ_{\ell}) =( \lim_n H^*(X_{\overline{s}},\ZZ/\ell^n) ) \otimes \QQ_{\ell}$, implies that $\pi_1(X_s) \rightarrow \pi_1(Y_s)$ is still étale cohomologically injective, and the map is clearly open. However, since $k(s)$ is a $p$-adic local field, we obtain a contradiction to what we proved in (1).
\end{proof}
\begin{corollary} \label{cor:polynondeg}
Let $K$ be a sub-$p$-adic field, and $X$ a smooth variety over $K$, and $Y$ a hyperbolic polycurve over $K$. Then every stably étale cohomologically injective map $\varphi \colon \pi_1(X) \rightarrow \pi_1(Y)$ is polynondegenerate. Further, if $Y$ is a hyperbolic polycurve over $K$ with the property that there exists a sequence of parameterizing morphisms   \[ Y = Y_n \rightarrow Y_{n-1} \rightarrow \cdots \rightarrow Y_1 \rightarrow Y_0 = \Spec K \]  with the property that each $Y_m$, $m \leq n$ has a nonzero log-canonical form, then any étale cohomologically injective map is polynondegenerate.
\end{corollary}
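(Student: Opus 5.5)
Fix a sequence of parameterizing morphisms $Y = Y_n \to Y_{n-1} \to \cdots \to Y_1 \to Y_0 = \Spec K$. By Definition \ref{def:polynondeg}(2), we must show that each composite $\varphi_m \colon \pi_1(X) \to \pi_1(Y) \to \pi_1(Y_m)$ is nondegenerate. The case $m=0$ is vacuous, since no $Z$ has negative dimension. For $m \geq 1$ the plan is to apply Proposition \ref{prop:hodgepoly}(2) to $\varphi_m$, with $Y_m$ in place of $Y$. This needs three hypotheses on $Y_m$. First, $Y_m$ is a $K(\pi,1)$, by Proposition \ref{prop:kpi1}. Second, $\pi_1((Y_m)_{\overline{K}})$ is slim, by Lemma \ref{lem:pi1ofpoly}. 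Third, some connected finite étale cover of $Y_m$ has a nonzero log-canonical form; this follows from Corollary \ref{prop:polyhyperbolic} applied to the hyperbolic polycurve $Y_m$. So everything reduces to showing that $\varphi_m$ is stably étale cohomologically injective.

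\emph{Openness.} The map $\pi_1(Y) \to \pi_1(Y_m)$ is surjective by Lemma \ref{lem:pi1ofpoly}(1) applied to the parameterizing maps. Hence $\varphi_m$ is open, because $\varphi$ is.

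\emph{Stable injectivity.} Let $N \subset \pi_1(Y_m)$ be open, and let $N' \subset \pi_1(Y)$ be its preimage. Then $N'$ is open, and $\varphi_m^{-1}(N) = \varphi^{-1}(N')$. Let $Y'_m \to Y_m$ and $Y' = Y \times_{Y_m} Y'_m$ be the corresponding covers, so $Y'$ corresponds to $N'$. The surjection above shows $Y'$ is connected. We must show that
\[ H^*(\pi_1((Y'_m)_{\overline{K}}),\QQ_\ell) \to H^*(X'_{\overline{K}},\QQ_\ell) \]
is injective, where $X'$ corresponds to $\varphi^{-1}(N')$. This map factors as the pullback $H^*(\pi_1((Y'_m)_{\overline{K}}),\QQ_\ell) \to H^*(\pi_1(Y'_{\overline{K}}),\QQ_\ell)$ followed by the map that is injective by the stable hypothesis on $\varphi$. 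It therefore suffices to show that the first map is injective.

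The first map is pullback along the projection $Y' \to Y'_m$, and $Y'$ is a hyperbolic polycurve over $Y'_m$ (a base change of the polycurve $Y \to Y_m$). Both spaces are $K(\pi,1)$'s, so we may work with étale cohomology. There are two ways to get injectivity. One is to choose a section of $Y'_{\overline{K}} \to (Y'_m)_{\overline{K}}$; this can fail to exist. The better route is to show that the Leray spectral sequence for the smooth fibration degenerates enough that $H^*(Y'_m) \to H^*(Y')$ is injective. For this I would use a fiberwise volume form: by Corollary \ref{prop:polyhyperbolic}, after a further finite étale cover the relative log-canonical class is available. A cleaner option is a transfer-type argument. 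After passing to a finite étale cover of $Y'_m$ (harmless, since cohomology injects under finite covers via trace), the fibration acquires a multisection that is finite étale over the base. Composing pullback with the trace along that multisection gives multiplication by its degree, which forces injectivity rationally. Such a multisection can be found by taking a point of the fiber over the generic point and spreading it out, then restricting to an open subset. The catch is that finite étaleness over all of $Y'_m$ is only needed up to removing a closed subset, and removing that subset destroys the injectivity we want.

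\textbf{The main obstacle} is exactly this injectivity of $H^*(Y'_m) \to H^*(Y')$ in all degrees. If the transfer argument does not work, I would instead use the Hodge--Tate/weight argument behind Lemma \ref{lemma:dominantcoh}, after specializing to a $p$-adic field as in the proof of Proposition \ref{prop:hodgepoly}. In the proper case, Lemma \ref{lemma:dominantcoh} applies directly to the dominant map $Y' \to Y'_m$.

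For the second statement, the additional hypothesis gives each $Y_m$ itself a nonzero log-canonical form. The last clause of Proposition \ref{prop:hodgepoly} then requires only plain étale cohomological injectivity of $\varphi_m$. This follows from the same factorization at the level $N = \pi_1(Y_m)$, with no covers needed.
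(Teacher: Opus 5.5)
Your reduction is correct up to the step you flag, but that step is a genuine gap, and it carries the whole content of the corollary in the non-proper case. To apply Proposition \ref{prop:hodgepoly}(2) to $\varphi_m$ you need $H^*(\pi_1((Y'_m)_{\overline{K}}),\QQ_\ell)\to H^*(\pi_1(Y'_{\overline{K}}),\QQ_\ell)$ to be injective in every degree. None of your suggestions proves this:
\begin{itemize}
\item sections need not exist;
\item the multisection transfer breaks down for the reason you give;
\item Leray spectral sequences of non-proper smooth maps need not degenerate;
\item Lemma \ref{lemma:dominantcoh} is Kleiman's Poincar\'e-duality argument for smooth \emph{proper} varieties, not a Hodge--Tate argument.
\end{itemize}
For open varieties, dominance alone says nothing about full cohomology: the dominant open immersion $\mathbb{A}^1\subset\mathbb{P}^1$ kills $H^2$. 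So your argument covers only the proper case. The corollary, however, concerns arbitrary hyperbolic polycurves, and that generality is what feeds Theorem \ref{cor:isom}. Your argument for the second statement has the same hole.

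The missing idea is that full cohomological injectivity of $\pi_1(Y)\to\pi_1(Y_m)$ is never needed. The proof of Proposition \ref{prop:hodgepoly}(1) only uses injectivity on log-canonical forms, i.e.\ Hodge--Tate injectivity, and that \emph{is} automatic for dominant maps. The paper first works over a $p$-adic local field:
\begin{itemize}
\item there, stable étale cohomological injectivity of $\varphi$ implies stable Hodge--Tate injectivity;
\item $\pi_1(Y)\to\pi_1(Y_m)$ is stably Hodge--Tate injective, because each $Y\times_{Y_m}Y'_m\to Y'_m$ is dominant and pulling back log forms along dominant maps is injective;
\item Hodge--Tate injectivity is closed under composition, so Proposition \ref{prop:hodgepoly}(1) applied to $\pi_1(X)\to\pi_1(Y_m)$ gives nondegeneracy.
\end{itemize}
For a general sub-$p$-adic field you cannot simply cite Proposition \ref{prop:hodgepoly}(2), since the composite is not known to be étale cohomologically injective. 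Instead the spreading-out and specialization argument is rerun: a putative degenerate factorization is specialized to a closed point with $p$-adic local residue field, where $\varphi$ remains étale cohomologically injective.

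Alternatively, your own route can be repaired, because the injectivity you want does hold. Each hyperbolic curve fibration $f$ in the tower $Y'\to Y'_m$ has fibres of Euler characteristic $2-2g-r\neq 0$. Moreover, $Rf_*\QQ_\ell$ is a lisse, hence dualizable, commutative algebra. The Becker--Gottlieb-type transfer $a\mapsto\mathrm{tr}(\text{multiplication by }a)$, composed with the unit $\QQ_\ell\to Rf_*\QQ_\ell$, is multiplication by $2-2g-r$. So pullback is split injective at each step, and Proposition \ref{prop:hodgepoly}(2) then applies directly.
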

\begin{proof}
We first prove the statement over a $p$-adic local field. Suppose given a stably étale cohomologically injective $\varphi \colon \pi_1(X) \rightarrow \pi_1(Y)$. To prove our statement, we can, by Corollary \ref{prop:polyhyperbolic}, assume  that $Y$ is a hyperbolic polycurve with the property that there is a sequence of parameterizing morphisms  \begin{equation} Y = Y_n \rightarrow Y_{n-1} \rightarrow \cdots \rightarrow Y_1 \rightarrow Y_0 = \Spec K  \label{eq:polydeg} \end{equation} such that $Y_m$ has a nonzero log-canonical form for every $m \leq n$. Since $Y \rightarrow Y_m$ is dominant for any $m \leq n$, the associated map $\pi_1(Y) \rightarrow \pi_1(Y_m)$ is stably Hodge--Tate injective. Since stably Hodge--Tate injective maps are closed under composition, Proposition \ref{prop:hodgepoly} proves that $\pi_1(X) \rightarrow \pi_1(Y_m)$ is nondegenerate. Thus $\varphi$ is polynondegenerate. 
For the case of a general field, note that if our Corollary is true for one field $K$, then it holds for all subfields $L \subset K$. Indeed, after replacing $Y$ over $K$ by a finite étale cover, we can assume that there exists a sequence of parameterizing morphisms as in \ref{eq:polydeg} such that each $Y_m$ has a nonzero log-canonical form. Then, since fundamental groups, as well as the notion of being étale cohomologically injective commutes with base-change along field extension in characteristic zero, we obtain the proposition for $K$. Suppose now that $\varphi \colon \pi_1(X) \rightarrow \pi_1(Y)$ is an open map which is stably étale cohomologically injective, and suppose there exists an $m \leq n$, a Zariski open $U \subset X$, a smooth, surjective, geometrically connected map $U \rightarrow Z$, with $\dim(Z) < m$, such that the map $\pi_1(U) \rightarrow \pi_1(X) \xrightarrow{\varphi} \pi_1(Y) \rightarrow \pi_1(Y_m)$ factors over $\pi_1(U) \rightarrow \pi_1(Z)$. By replacing $Y$ by a finite étale cover, we can assume that $Y_m$ has a nonzero log-canonical form. As in the proof of Proposition \ref{prop:hodgepoly}, we can reduce to the case when $K$ is the function field of a variety $V$ over $L$, where $L$ is a $p$-adic field. By possibly shrinking $V$, we can find smooth models $\mathcal{U}, \mathcal{X}, \mathcal{Z}$, of $U,X,Z$ respectively, and corresponding relative snc-compactifications  $\overline{\mathcal{U}},\overline{\mathcal{X}},\overline{\mathcal{Y}},\overline{\mathcal{Z}}$, smooth and proper over $V$. Such models and relative snc-compactifications can also be assumed to exist for $Y_i$, for each $1 \leq i \leq n$; write $\mathcal{Y}_i$ for the corresponding smooth model, and $\overline{\mathcal{Y}_i}$ for the relative snc-compactification. We can further shrink so that $\mathcal{Y}$ is a hyperbolic polycurve over $V$, and even assume that $\mathcal{Y}  = \mathcal{Y}_n \rightarrow \mathcal{Y}_{n-1} \rightarrow \cdots \rightarrow \mathcal{Y}_1 \rightarrow V$ is a sequence of parameterizing maps, which coincide with the given one on the generic fiber. We also arrange it so that there is a map $\mathcal{U} \rightarrow \mathcal{X}$ is an open immersion, and a map $\mathcal{U} \rightarrow \mathcal{Z}$ which is smooth, surjective geometrically connected, and so that on the generic fibers, these maps recover $U \rightarrow X$ and $U \rightarrow Z$. We now find a discrete henselian valuation ring, and a dominant map $\Spec W \rightarrow V$ where the closed point of $\Spec W$ goes to a closed point of $V$, and further induces an isomorphism on residue fields. By specializing, arguing as in Proposition \ref{prop:hodgepoly}, we would obtain a factorization of  $\pi_1(U_{k(s)}) \rightarrow \pi_1(Y_{k(s)})$ through $\pi_1(U_{k(s)}) \rightarrow \pi_1(Z_{k(s)})$, which would be a contradiction since $\pi_1(X_{k(s)}) \rightarrow \pi_1(Y_{k(s)})$ is still étale cohomologically injective.
\end{proof}
\begin{remark} \label{rmk:saver}
Assume that $Y$ is a hyperbolic polycurve of dimension $n$. We call a map $\varphi \colon \pi_1(X) \rightarrow \pi_1(Y)$ for stably étale cohomologically injective in degree up to $n$ if the injectivity of Definition \ref{def:cohdom} holds in degree less than or equal to $n$. Thus, we do not require the map $H^m(\pi_1(Y_{\overline{K}}),\QQ_{\ell}) \rightarrow H^m(X_{\overline{K}},\QQ_{\ell})$ to be injective if $m > n$. By inspecting the proof of Corollary \ref{cor:polynondeg} and Proposition \ref{prop:hodgepoly}, any stably étale cohomologically injective map in degree up to $n$, $\varphi \colon \pi_1(X) \rightarrow \pi_1(Y)$, is polynondegenerate.\end{remark}
If $X$ and $Y$ are $K(\pi,1)$s, a map is (stably) étale cohomologically injective map if and only if it is cohomologically injective. We easily derive:
\begin{corollary} \label{cor:hodgepoly} 
Let $K$ be a sub-$p$-adic field, and let $X$ and $Y$ be smooth varieties, and assume that $Y$ is a hyperbolic polycurve, and that $X$ is a $K(\pi,1)$. Then every stably cohomologically injective map $\pi_1(X) \rightarrow \pi_1(Y)$ over $G_K$ is polynondegenerate. Further, if $Y$  is a hyperbolic polycurve over $K$ with the property that there exists a sequence of parameterizing morphisms   \[ Y = Y_n \rightarrow Y_{n-1} \rightarrow \cdots \rightarrow Y_1 \rightarrow Y_0 = \Spec K \]  with the property that each $Y_m$, $m \leq n$ has a nonzero log-canonical form, then any cohomologically injective map is in fact polynondegenerate. If the dimension of $Y$ is at most two, the analogous statements hold if one just assumes that $X$ is smooth.
\end{corollary}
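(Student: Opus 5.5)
The plan is to reduce Corollary \ref{cor:hodgepoly} to Corollary \ref{cor:polynondeg} by showing that, under the stated hypotheses, (stably) cohomologically injective maps are (stably) étale cohomologically injective.

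\emph{The $K(\pi,1)$ case.} Assume $X$ is a $K(\pi,1)$. Every connected finite étale cover $X'$ of $X$, in particular the cover corresponding to $\varphi^{-1}(N)$ for $N \subset \pi_1(Y)$ open, is again a $K(\pi,1)$ (Remark \ref{remark:composition}). So the comparison maps
\[ H^*(\pi_1(X'_{\overline{K}}),\QQ_\ell) \rightarrow H^*(X'_{\overline{K}},\QQ_\ell) \]
are isomorphisms. This is first seen with $\ZZ/\ell^n$ coefficients; one then passes to the limit using Jannsen's continuous cohomology and tensors with $\QQ_\ell$. Here one should check that $X'_{\overline{K}}$ is a $K(\pi,1)$, which follows from the fiber sequence $\Pi(X'_{\overline{K}}) \to \Pi(X') \to BG_K$ and the long exact sequence of homotopy groups. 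Hence a (stably) cohomologically injective $\varphi$ is (stably) étale cohomologically injective. Both assertions then follow directly from Corollary \ref{cor:polynondeg}: the first from its stable statement, the second from its statement for $Y$ with a sequence of parameterizing morphisms all of whose $Y_m$ have nonzero log-canonical forms.

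\emph{The case $\dim Y \le 2$ with $X$ only smooth.} Here we cannot identify group cohomology of $\pi_1(X_{\overline{K}})$ with étale cohomology in all degrees. I would use Remark \ref{rmk:saver}: it suffices that $\varphi$ be stably étale cohomologically injective in degrees $\le n = \dim Y \le 2$.

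For any connected $X'$ the comparison map $H^i(\pi_1(X'_{\overline{K}}),\QQ_\ell) \to H^i(X'_{\overline{K}},\QQ_\ell)$ is an isomorphism for $i \le 1$ and injective for $i = 2$. This is the standard five-term consequence of the Hochschild–Serre/Leray spectral sequence for the pro-universal cover, or equivalently of $\Pi(X') \to B\pi_1(X')$ being $2$-connected on cohomology. With finite coefficients it holds for every lcc sheaf, and one passes to $\ZZ_\ell$ via Mittag-Leffler-type arguments in Jannsen's framework and then to $\QQ_\ell$.

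Composing, injectivity of $H^i(\pi_1(Y'_{\overline{K}}),\QQ_\ell) \to H^i(\pi_1(X'_{\overline{K}}),\QQ_\ell)$ for $i \le 2$ gives injectivity into $H^i(X'_{\overline{K}},\QQ_\ell)$. Applying this to every open $N$ yields stable étale cohomological injectivity in degrees $\le 2$, so Remark \ref{rmk:saver} gives polynondegeneracy. For the non-stable variant, I would likewise run the proofs of Proposition \ref{prop:hodgepoly} and Corollary \ref{cor:polynondeg} using only degree $m \le 2$ injectivity. Those proofs only use the degree-$m$ Hodge–Tate pieces of $Y_m$, with $m \le n \le 2$.

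\emph{Main obstacle.} The delicate step is the degree-$2$ injectivity of the comparison map with $\QQ_\ell$ (continuous) coefficients. With torsion coefficients it is formal. The limit requires care with $\varprojlim^1$ terms: I would argue that the kernel in degree $2$ with $\ZZ_\ell$ coefficients is at most torsion, coming from $\varprojlim^1$ of finite groups in degree $1$, and hence dies after tensoring with $\QQ_\ell$.
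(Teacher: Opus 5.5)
Your proposal is correct and follows essentially the same route as the paper. The $K(\pi,1)$ case is the formal reduction to Corollary \ref{cor:polynondeg}, and for $\dim Y\le 2$ you combine Remark \ref{rmk:saver} with the Hochschild--Serre spectral sequence of the pro-universal cover, which gives isomorphism in degree $\le 1$ and injectivity in degree $2$; the paper instead applies Jannsen's continuous version directly with $\ZZ_\ell$-coefficients, where $E_2^{0,1}=0$ forces $E_2^{2,0}\to H^2$ to be injective. Your $\varprojlim^1$ worry is harmless: the $\varprojlim^1 H^1(-,\ZZ/\ell^n)$ terms on both sides are identified (indeed zero, since the groups are finite), so the snake lemma already gives injectivity integrally, and your explicit treatment of the non-stable variant for $\dim Y\le 2$ fills in what the paper leaves implicit.
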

\begin{proof}
The only non-trivial part to prove is the statement that when $Y$ has dimension at most two, any stably cohomologically injective map $\varphi \colon \pi_1(X) \rightarrow \pi_1(Y)$ is polynondegenerate. This will follow from Remark \ref{rmk:saver} when we note that any such map is stably étale cohomologically injective in degree up to two. Let $\tilde{X} \rightarrow X_{\overline{K}}$ be ``the'' universal cover of $X_{\overline{K}}$. We consider the spectral sequence with $E_2$-term $E^{p,q}_2 = H^p(\pi_1(X_{\overline{K}}),(H^q(\tilde{X},\ZZ/\ell^n)_n))$, which converges to $H^{p+q}(X_{\overline{K}},\ZZ_{\ell})$, see \cite[Theorem 3.3]{JannsenContinuous}. The cohomology group $ H^p(\pi_1(X_{\overline{K}}),(H^q(\tilde{X},\ZZ/\ell^n)_n))$ is continuous étale cohomology with coefficients in the inverse system  $(H^q(\tilde{X},\ZZ/\ell^n))_n$. As $\tilde{X}$ is simply connected, $H^1(\tilde{X},\ZZ/\ell^n)=0$ for all $n \geq 1$. This proves that the $E_2$-term $E_2^{0,1}$ vanishes. We then get, in addition to the edge map $H^1(\pi_1(X_{\overline{K}}),\ZZ_{\ell}) \rightarrow H^1(X_{\overline{K}},\ZZ_{\ell})$ being an isomorphism, that the map $H^2(\pi_1(X_{\overline{K}}),\ZZ_{\ell}) \rightarrow H^2(X_{\overline{K}},\ZZ_{\ell})$ is injective. By tensoring with $\QQ_{\ell}$, we see that any stably cohomologically injective map is stably étale cohomologically injective in degree up to two, hence polynondegenerate.
\end{proof}
We now recall the following Theorem which is due to Hoshi \cite[Theorem 3.7]{HoshiGrothendieck}.
\begin{theorem} \label{thm:hoshi}
Let $K$ be a sub-$p$-adic field, $X$ a normal variety, and $Y$ a hyperbolic polycurve. Then the natural map 
$\Hom_K^{\dom}(X,Y) \rightarrow \Hom_{G_K}(\pi_1(X),\pi_1(Y))$ determines a bijection \[ \Hom_K^{\dom}(X,Y) \rightarrow \Hom^{\PND}_{G_K}(\pi_1(X),\pi_1(Y)), \]where the latter is the set of (conjugacy classes of) polynondegenerate maps.
\end{theorem}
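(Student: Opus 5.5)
The plan is to prove this by induction on $n=\dim Y$, using a sequence of parameterizing morphisms
\[ Y = Y_n \to Y_{n-1} \to \cdots \to Y_1 \to Y_0 = \Spec K \]
that witnesses polynondegeneracy. I would take the following as black boxes:
\begin{itemize}
\item the injectivity of $\Hom_K(X,Y)\to\Hom_{G_K}(\pi_1(X),\pi_1(Y))$ \cite[Proposition 3.2]{HoshiGrothendieck};
\item Mochizuki's Hom-theorem \cite{MochizukiHom} for dominant maps from normal varieties to hyperbolic curves over sub-$p$-adic fields.
\end{itemize}
Well-definedness is the easy direction. A dominant $f\colon X\to Y$ makes every composite $X\to Y_m$ dominant, so the induced map on $\pi_1$ is open. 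If some composite $\pi_1(U)\to\pi_1(Y_m)$ factored through $\pi_1(Z)$ with $\dim Z<m$, then the $G_K$-equivariant pullback on $\QQ_p$-cohomology of degree $m$, passed to Hodge--Tate weight $m$ parts, would factor through $\HT(Z)$ in degree $m$, which is zero. This contradicts dominance after passing to an étale cover of $Y_m$ with a nonzero log-canonical form (Corollary \ref{prop:polyhyperbolic}), exactly as in the proof of Proposition \ref{prop:hodgepoly}; for general sub-$p$-adic $K$ one reduces by the same spreading-out and specialization. Injectivity of the map in the statement is then the cited injectivity result.

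For surjectivity, let $\varphi$ be polynondegenerate. The base case $n=1$ is Mochizuki's theorem. For a curve $Y$, nondegeneracy is just openness together with the exclusion of maps factoring through $G_K$ on an open subset, and open maps to $\pi_1$ of a hyperbolic curve come from dominant morphisms. For the inductive step, the composite $\pi_1(X)\to\pi_1(Y_{n-1})$ is again polynondegenerate with respect to the truncated sequence, so by induction it equals $(f_{n-1})_*$ for a dominant $f_{n-1}\colon X\to Y_{n-1}$. Let $\eta$ be the generic point of $Y_{n-1}$; its residue field $L=K(\eta)$ is again sub-$p$-adic. By Lemma \ref{lem:pi1ofpoly} there is an exact sequence
\[ 1\to\pi_1(Y_{\overline\eta})\to\pi_1(Y)\to\pi_1(Y_{n-1})\to 1, \]
so $\pi_1(Y_\eta)=\pi_1(Y)\times_{\pi_1(Y_{n-1})}G_L$. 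Hence $\varphi$ induces a map $\varphi_\eta\colon \pi_1(X_\eta)\to\pi_1(Y_\eta)$ over $G_L$. (If $X_\eta$ is not geometrically connected, I would first replace $Y_{n-1}$ by the Stein factorization, which is finite étale over a dense open subset, and $Y$ by the corresponding cover.)

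The key claim is that $\varphi_\eta$ is open. Its restriction to the geometric fiber lands in $\pi_1(Y_{\overline\eta})$, and the image is topologically finitely generated and normalized by an open subgroup. By elasticity of $\pi_1(Y_{\overline\eta})$ (Lemma \ref{lem:pi1ofpoly}(3)), this image is therefore either open or trivial. If it were trivial, I would shrink $X$ to an open $U$ on which $f_{n-1}$ is smooth, surjective and geometrically connected. Then $\pi_1(U)\to\pi_1(Y)$ would factor through $\pi_1(Y_{n-1})$ (using slimness and Proposition \ref{prop:homouter} to pass from outer to genuine factorization), and $\dim Y_{n-1}<n$ contradicts the nondegeneracy of $\varphi$. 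With openness established, Mochizuki's theorem over $L$ gives a dominant $L$-morphism $X_\eta\to Y_\eta$ inducing $\varphi_\eta$. This spreads out to a morphism $U\to Y$ over $Y_{n-1}$ on a dense open $U\subset X$, inducing $\varphi|_{\pi_1(U)}$ up to conjugacy; again slimness and Proposition \ref{prop:homouter} control the conjugacy ambiguity fiberwise.

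The main obstacle is extending $U\to Y$ to all of $X$. I would argue as follows. Over $X$, the fiber product $X\times_{Y_{n-1}}Y\to X$ is a hyperbolic curve, and the rational map $U\to Y$ is a rational section of it. The section of $\pi_1$ obtained by combining $\varphi$ with the projection $\pi_1(X)\to\pi_1(X)$ is defined on all of $\pi_1(X)$, not merely on $\pi_1(U)$. I would first extend the section over codimension-one points: at the DVR of each such point, the existence of the $\pi_1$-section forces the boundary of the compactified curve to be avoided and gives good reduction, by a Néron/stable-model argument. Then I would extend over codimension $\ge 2$ points using normality of $X$ and the fact that hyperbolic curves contain no rational curves, so the graph closure cannot acquire positive-dimensional fibers. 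Dominance of the resulting morphism $f$ is automatic, since $f_{n-1}$ is dominant and the fiber maps are dominant, and $f_*=\varphi$ follows from the density of $\pi_1(U)\to\pi_1(X)$ together with the injectivity result.
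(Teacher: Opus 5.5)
\textbf{Gap: extending $U \to Y$ over points of codimension $\geq 2$ of a normal $X$.} The paper gives no proof of this statement; it imports it from Hoshi. Your surjectivity strategy is reasonable: induction along the parameterizing sequence, openness of $\varphi_\eta$ from elasticity of $\pi_1(Y_{\overline{\eta}})$ plus nondegeneracy, Mochizuki over $K(Y_{n-1})$, then spreading out. In the elasticity step you should say why the image of $\pi_1(X_{\overline{\eta}})$ is normalized by an \emph{open} subgroup. It is normal in $\varphi(\pi_1(X))$, and $\varphi(\pi_1(X))\cap\pi_1(Y_{\overline{\eta}})=\varphi(\ker(\pi_1(X)\to\pi_1(Y_{n-1})))$ is open in $\pi_1(Y_{\overline{\eta}})$ because $\varphi$ is open.

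The real problem is the final extension step. Your justification, normality of $X$ plus the absence of rational curves, fails for two reasons.
\begin{enumerate}
\item The graph closure lives in $X\times_{Y_{n-1}}\overline{Y}$, whose fibres are $\mathbb{P}^1$ when the fibre genus is $0$.
\item Even for proper fibres of genus $\geq 2$, the absence of rational curves only forces extension over \emph{regular} points, since exceptional fibres over smooth points are covered by rational curves. Over normal singular points it fails.
\end{enumerate}
Concretely, let $X$ be the affine cone over a projectively normally embedded curve $C$ of genus $2$, let $U=X\setminus\{\text{vertex}\}$, $Y=C$, and $g\colon U\to C$ the projection. Then $X$ is normal and $C$ has no rational curves, yet the graph closure has fibre $C$ over the vertex and $g$ does not extend. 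Your argument never uses $\varphi$ at codimension $\geq 2$ points, so it would apply verbatim here. What actually excludes this example is that $\pi_1(U)\to\pi_1(C)$ does not factor through $\pi_1(X)$: $\pi_1(X_{\overline{K}})=1$, while $\pi_1(U_{\overline{K}})\to\pi_1(C_{\overline{K}})$ is surjective. Also, Mochizuki's Theorem A is formulated for smooth sources. Unless you have the normal case as a black box, the same issue already arises at $n=1$.

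\textbf{What is needed.} You need an extension lemma that uses the factorization through $\pi_1(X)$.
\begin{itemize}
\item \emph{Cuspidal case.} If the fibres of $Y\to Y_{n-1}$ have cusps, then $X\times_{Y_{n-1}}Y\to X$ is affine, being the complement of a relatively ample étale divisor. Once $X\setminus U$ has codimension $\geq 2$, the section extends by Hartogs on the normal $X$.
\item \emph{Proper case.} Let $x$ be a generic point of the indeterminacy locus and $\widetilde{X}\to X$ a resolution. The section extends over $\widetilde{X}$, and here your rational-curve argument is valid. It then suffices that each component of the fibre $E$ over $x$ is contracted. Put $T=\widetilde{X}\times_X\Spec\mathcal{O}^{\mathrm{sh}}_{X,x}$. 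Then $\pi_1(E)\cong\pi_1(T)$, $\pi_1(T\setminus E)\twoheadrightarrow\pi_1(T)$, and $T\setminus E$ maps into $U$. Since $\varphi$ is defined on all of $\pi_1(X)$, the composite $\pi_1(T\setminus E)\to\pi_1(Y)$ factors through $\pi_1(\Spec\mathcal{O}^{\mathrm{sh}}_{X,x})=1$. Hence $E\to Y_{\overline{x}}$ is trivial on $\pi_1$. It is therefore constant, because a nonconstant map from a proper irreducible variety to a hyperbolic curve has open image on $\pi_1$. Zariski's main theorem then gives the extension.
\end{itemize}
In codimension one, by contrast, no Néron or stable-model argument is needed, since $X\times_{Y_{n-1}}Y\to X$ is already a hyperbolic curve. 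The valuative criterion gives an $R$-point of the compactification, and the vanishing of the image of inertia keeps it off the cusps.
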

We can now prove our main theorem.
\begin{proof}[Proof of Theorem \ref{thm:dominant}]
We first prove (1) when $X$ is a smooth and proper $K(\pi,1)$. By Corollary \ref{cor:hodgepoly}, any stably cohomologically injective map $\varphi \colon \pi_1(X) \rightarrow \pi_1(Y)$ is polynondegenerate. We thus have a factorization \[ \Hom^{\dom}_K(X,Y) \rightarrow \Hom_{G_K}^{\scohdom}(\pi_1(X),\pi_1(Y)) \rightarrow \Hom^{\PND}_{G_K}(\pi_1(X),\pi_1(Y)) \] of the bijection $\Hom^{\dom}_K(X,Y) \rightarrow  \Hom^{\PND}_{G_K}(\pi_1(X),\pi_1(Y))$ from Theorem \ref{thm:hoshi}. Since \[ \Hom^{\scohdom}_{G_K}(\pi_1(X),\pi_1(Y)) \rightarrow \Hom^{\PND}_{G_K}(\pi_1(X),\pi_1(Y)) \] is injective, $\Hom^{\dom}_K(X,Y) \rightarrow \Hom^{\scohdom}_{G_K}(\pi_1(X),\pi_1(Y))$ is a bijection. Thus (1) follows when $X$ is a smooth and proper $K(\pi,1)$. The result when $Y$ is of dimension at most two, and $X$ is an arbitrary smooth and proper variety follows along exactly the same lines using the last statement of Corollary \ref{cor:hodgepoly}. For (2), we note that by Lemma \ref{lem:kpi1cohdom}, the set $\pi_0(\Map^{\scohdom}_{BG_K}(\Pi(X),\Pi(Y)))$ equals $\Hom^{\secohdom}_{G_K}(\pi_1(X),\pi_1(Y))$. The same argument as in (1) now proves that the map $\Hom^{\dom}_K(X,Y) \rightarrow \pi_0(\Map^{\scohdom}_{BG_K}(\Pi(X),\Pi(Y)))$ is a bijection. The proof of (3) is analogous to the proofs of (1) and (2).
\end{proof}
\begin{proof}[Proof of Theorem \ref{cor:isom}]
There is a natural injective map \[ \Isom_K(X,Y) \rightarrow \Isom_{G_K}(\pi_1(X),\pi_1(Y)) \subset \Hom^{\scohdom}_{G_K}(\pi_1(X),\pi_1(Y)). \] By Corollary \ref{cor:polynondeg} and Theorem \ref{thm:hoshi}, we obtain from any isomorphism $\varphi \colon \pi_1(X) \rightarrow \pi_1(Y)$ a dominant map $f \colon X \rightarrow Y$ of polycurves with the property that \[ f_* = \varphi \colon \pi_1(X) \rightarrow \pi_1(Y) \] is an isomorphism. By applying the same reasoning to $\varphi^{-1} \colon \pi_1(Y) \rightarrow \pi_1(X)$, we obtain a dominant map $g \colon Y \rightarrow X$ of polycurves such that $g_*= \varphi^{-1} \colon \pi_1(Y) \rightarrow \pi_1(X)$. Then we see that $f \circ g$ and $g \circ f$ are the identity since the maps $\Hom_K(X,X) \rightarrow \Hom_{G_K}(\pi_1(X),\pi_1(X))$ and $\Hom_K(Y,Y) \rightarrow \Hom_{G_K}(\pi_1(Y),\pi_1(Y))$ are injective \cite[Proposition 3.2]{HoshiGrothendieck}. 
\end{proof}

\DeclareFieldFormat{labelnumberwidth}{{#1\adddot\midsentence}}
\printbibliography
\end{document}